\documentclass[a4paper]{article}
\usepackage{amsmath}
\usepackage[makeroom]{cancel}
\usepackage{amsfonts}
\usepackage{amssymb}
\usepackage{amsthm}
\usepackage{graphicx}
\usepackage[all,2cell]{xy}
\usepackage{todonotes}

\newtheorem{theorem}{Theorem}[section]

\newtheorem{cor}[theorem]{Corollary}

\newtheorem{definition}[theorem]{Definition}
\newtheorem{ex:}[theorem]{Example}

\newtheorem{lemma}[theorem]{Lemma}

\newtheorem{prop}[theorem]{Proposition}
\newtheorem{remark}[theorem]{Remark}

\numberwithin{equation}{section}

\newcommand{\Rr}{\mathbb R}

\newcommand{\rest}[1]{\Big{\vert}_{#1}}

\renewcommand{\d}{\mathrm d}

\let\epsilon\varepsilon
\newcommand{\LA}{\ensuremath{\mathcal{LA}}}     

\newcommand{\XX}{\mathfrak X}           
\newcommand{\G}{\mathcal G}             
\newcommand{\F}{\ensuremath{\mathcal F}} 
\newcommand{\Lie}{\mathcal{L}}          
\renewcommand{\gg}{\mathfrak{g}}        
\newcommand{\hh}{\mathfrak{h}}          
\newcommand{\kk}{\mathfrak{c}}          
\newcommand{\ggl}{\mathfrak{gl}}        
\renewcommand{\ll}{\mathfrak{l}}        
\DeclareMathOperator{\coker}{coker}     
\DeclareMathOperator{\Img}{Im}          
\DeclareMathOperator{\Ad}{Ad}           
\DeclareMathOperator{\ad}{ad}           

\newcommand{\ddt}[1]{\frac{d}{d #1}\Big{\vert}_{#1 =0}}			   

\begin{document}

\title{Remarks on the structure and integrability of LA-groups}
\author{Camilo Angulo}
\date{2025}
\maketitle

\begin{abstract}
We study the structure of an LA-group identifying its underlying VB-group with a representation up to homotopy. We show that the Lie algebroid structure is determined by a complementary action up to homotopy of the Lie algebra of units. We identify the equations that the representation and the action need to verify in order to assemble into an LA-group, establishing an equivalence between LA-groups and LA-matched pairs. As an application, we catalog some extreme examples of representations and actions and comment on their integrability. 
\end{abstract}

\section*{Introduction}
 
An LA-groupoid is a groupoid object internal to the category of Lie algebroids. LA-groupoids can be obtained by applying the Lie functor to one of Ehresmann's double Lie groupoids; however, whether every LA-groupoid is of this form remains an open problem \cite{LucaPhD}. 
The elegant theory of integration developed in \cite{CF2} does not account for certain topological peculiarities that appear in the categorified theory, and therefore cannot be applied directly. 
Consequently, progress has relied largely on ad hoc strategies: understanding the structure of double Lie algebras for the integration of the cotangent groupoid of a Poisson-Lie group to a double Lie group~\cite{LuWein}, later extended to Poisson homogeneous spaces~\cite{BIL} and to certain double products of groupoids~\cite{2ndOrdGeom,Daniel1}; the use of regular monoid actions for the integration of VB-algebroids to VB-groupoids \cite{BCD}; and the adaptation of the van Est strategy for the integration of strict Lie 2-algebras to strict Lie 2-groups \cite{ZhuInt2Alg,Angulo:2021}. 

\noindent Double Lie groupoids are not only of intrinsic mathematical interest; they have recently played a central r\^ole in resolving a long-standing conjecture concerning the existence of generalized K\"ahler potentials expressing the metric locally in terms of a single real scalar function \cite{Daniel-Marco}. 
This was accomplished by describing the underlying holomorphic structure of a generalized K\"ahler manifold as a diagram in the 2-category of symplectic double Lie groupoids. 
This renewed interest in double structures prompted us to revisit the integrability problem for LA-groupoids.

\noindent In view of the importance that understanding the Lie theory of groups and algebras played in the integration of Lie algebroids, we focus our attention on studying LA-groups, that is, LA-groupoids whose space of objects is a Lie algebra. 
Using the correspondence established in \cite{VB&Reps}, we express the groupoid structure using a representation up to homotopy of the base Lie group. 
By writing down the equations that this representation needs to satisfy for the structural maps to be compatible with the algebroid structure, we identify a complementary \textbf{action up to homotopy} of the Lie algebra of units. 
This complementary action determines the Lie algebroid structure on the vector bundle of arrows (see Definition~\ref{Def:AUTH} and Proposition~\ref{Prop:MainAUTH}). 
The appearance of this complementary structure is a somewhat desired feature as any satisfactory infinitesimal description must reflect the inherent symmetry of double Lie groupoids.
In addition, by identifying the relations that the representation and the action must satisfy in order to assemble into an LA-group, we generalize the main result of~\cite{MackJotzRaj} and establish a correspondence between LA-groups and \textbf{LA-matched pair} (see Definition~\ref{Def:LAMtchdPair} and Theorem~\ref{Theo:MainEqceSuite}). 
As an application, we catalog some extreme classes of examples of representations and actions and comment on their integrability. 
In particular, in Theorem~\ref{Theo:IntGps}, we prove that whenever the anchor of the algebroid of arrows of an LA-group has constant rank, there is a double Lie group integrating it; 
and, in Section~\ref{ssec-TheWorld}, we prove that a similar result holds under mild completeness assumptions for \emph{vacant} LA-groups.

\noindent The paper is organized as follows: Section~\ref{sec:1} contains preliminaries and introduce our main tool, Theorem~\ref{Theo:MainEqce}. Subsection~\ref{ssec:2} collects results concerning the groupoid structure underlying an LA-group. Section~\ref{sec:3} derives the compatibility equations required for a representation up to homotopy to build a groupoid whose structural maps are Lie algebroid morphisms, first those implied by their being anchored in Subsection~\ref{ssec-anchors} and then those implied by their respecting the brackets in Subsection~\ref{ssec-brackets}. Section~\ref{sec:4} collects applications related to the structure: Subsection~\ref{ssec-LAMtchPrs} extends Theorem~\ref{Theo:MainEqce} to Theorem~\ref{Theo:MainEqceSuite} by identify the underlying \emph{action up to homotopy} of the Lie algebra of untis and reconstructing the LA-group using a complementing representation up to homotopy; then, Subsection~\ref{ssec-LAStr} lays down some results concerning the algebroid structure of the space of arrows, highlighting thus some of the symmetric features of an LA-group. Lastly, Section~\ref{sec:5} collects applications related to the integrability of some extreme classes of examples of LA-groups.

${}$

\noindent\textbf{Acknowledgments.} The author would like to thank the hospitality of both IMPA and Georg-August Universit\"at G\"ottingen, where this work was carried out. The author is also grateful to Stefano Ronchi for carefully reading a preliminary version of this article and for helping in improving its presentation.

${}$

\noindent\textbf{Notational conventions:} If $E\overset{\pi}\to B$ is a vector bundle and $b\in B$, we convene that $E_b$ is the fibre over $b$, that is the vector space $\pi^{-1}(b)$. Similarly, if $F:E\longrightarrow E'$ is a map of vector bundles covering $f:B\longrightarrow B'$, then $F_p:E_p\longrightarrow E'_{f(p)}$ denotes the linear map at the level of fibres. For any given groupoid $G\rightrightarrows M$ and any point $p\in M$, we write $\G_p(G)=\lbrace g\in G\mid t(g)=p=s(g)\rbrace$ for the \textbf{isotropy group} at $p$, and $O^G_p=\lbrace x\in M\mid\exists g\in G:t(g)=x, s(g)=p\rbrace$ for the \textbf{orbit} through $p$; furthermore, if $G$ is a Lie groupoid, $A_G$ stands for its Lie algebroid. For any given Lie algebroid $A\to M$ with anchor $\rho:A\longrightarrow TM$ and any point $p\in M$, we write $\gg_p(A)=\ker(\rho_p)$ for the \textbf{isotropy Lie algebra} at $p$, and $L_p^A$ for the \textbf{leaf} through $p$ integrating the distribution $\F^A:=\rho(\Gamma(A))\leq\XX(M)$. Given a map $f:M'\longrightarrow M$, $f^*A$ is the pull-back vector bundle $M'\times_MA$ and, whenever $df$ is transversal to the anchor, $f^!A$ the pull-back Lie algebroid $TM'\times_{f^*TM}f^*A$. 


\section{LA -groups and their associated representations up to homotopy}\label{sec:1}
Throughout, we work with the LA-group $A\rightrightarrows\hh$ and we denote its base $G$. 
Let $e\in G$ be the identity element, 
then, as the source map $s:A\longrightarrow\hh$ is fibre-wise surjective, 
there is a canonically split short exact sequence of vector spaces
\begin{eqnarray}\label{Eq:unitSplit}
\xymatrix{
(0) \ar[r] & \kk \ar[r] & A_e \ar[r]_{s_e} & \hh \ar[r]\ar@/_1pc/[l]_u & (0),
}
\end{eqnarray}
where $\kk :=\ker (s_e)\leq A_e$ is the \textbf{core} and $u$ is the unit map. 
Right multiplication yields an isomorphism $\ker(s)\cong G\times\kk$. Indeed, if $(a_0,a_1)\in A_g\oplus_{\hh}A_h$, we write $a_0\Join a_1:=m_{(g,h)}(a_0,a_1)\in A_{gh}$ and with this notation, the isomorphism and its inverse are given by $\ker(s_g)\ni a\mapsto(g,a\Join 0_{g^{-1}})$ and $(g,z)\mapsto z\Join 0_g$, respectively. 
As a map of vector bundles, $s$ covers the constant map $\bar{\ast}:G\longrightarrow\ast$ and $\bar{\ast}^*\hh\cong G\times\hh$; hence, the \textbf{core sequence} is isomorphic to
\begin{eqnarray}\label{Eq:CoreSeq}
\xymatrix{
(0) \ar[r] & G\times\kk \ar[r] & A \ar[r] & G\times\hh \ar[r] & (0),
}
\end{eqnarray}
from which it follows that $A$ is necessarily trivial as a vector bundle. A splitting $\sigma:G\times\hh\longrightarrow A$ of \eqref{Eq:CoreSeq} is said to \textbf{extend the unit} if $\sigma_e(y)=u(y)$, for all $y\in\hh$. Let $\sigma$ be a unit extending splitting and consider the induced isomorphism $A\cong G\times(\kk\oplus\hh)$. Then, $G\times(\kk\oplus\hh)$ inherits a VB-groupoid structure whose source, target and unit maps are respectively
\begin{eqnarray}\label{Eq:StrMaps}
s(g;z,y)=y, & t(g;z,y)=\partial z+\Delta^\hh_gy, & u(y)=(e;0,y),
\end{eqnarray}
where $\partial z:=t_e(z)$ and $\Delta^\hh_gy:=t_g(\sigma_g(y))$.
For the multiplication, first notice that $s(g;z_0,y')=t(h;z_1,y)$ if and only if $y'=\partial z_1+\Delta^\hh_{h}y$; therefore, the space of composable arrows $A\times_{\hh}A$ is isomorphic to $G^2\times(\kk^2\oplus\hh)$. Under this identification, one concludes that the multiplication is given by
\begin{eqnarray}\label{Eq:StrMult}
m(g,h;z_0,z_1,y):=(gh;z_0+\Delta^\kk_gz_1-\Omega_{(g,h)}(y),y),
\end{eqnarray}
where $\Delta^\kk_g(z):=\sigma_g(t_e(z))\Join z\Join 0_{g^{-1}}$ and 
\begin{align*}
    \Omega_{(g,h)}(y) & :=\big{(}\sigma_{gh}(y)-\sigma_g(\Delta^\hh_hy)\Join\sigma_h(y)\big{)}\Join 0_{(gh)^{-1}}.
\end{align*}
Recall that a \textbf{representation up to homotopy} (RUTH for short) of a Lie group $G$ on the graded vector space $V_{1}\oplus V_0$ is a 4-tuple $(\partial,\Delta^0,\Delta^1,\Omega)$ that consists of a linear map $\partial:V_{1}\longrightarrow V_0$, smooth maps $\Delta^k:G\longrightarrow V_{k}^*\otimes V_k$ such that 
$\Delta^k_e=\textnormal{Id}_{V_k}$, and a smooth map $\Omega:G^2\longrightarrow V_{0}^*\otimes V_{1}$ that vanishes on $G\times\lbrace e\rbrace$ and $\lbrace e\rbrace\times G$, further subject to the following relations:
\begin{align*}
    \partial\circ\Delta^1 & =\Delta^0\circ\partial, & \Delta^0_{gh}-\Delta^0_g\circ\Delta^0_h & =\partial\circ\Omega_{(g,h)}, \\
    \Delta^1_g\circ\Omega_{(h,k)}-\Omega_{(gh,k)} & +\Omega_{(g,hk)}-\Omega_{(g,h)}\circ\Delta^0_k=0, & \Delta^1_{gh}-\Delta^1_g\circ\Delta^1_h & =\Omega_{(g,h)}\circ\partial,
\end{align*}
for all $g,h,k\in G$. 
\begin{theorem}{\cite{VB&Reps}}\label{Theo:MainEqce}
    Given a VB-group $A\rightrightarrows\hh$ over $G$ with core $\kk$ and a unit extending splitting of its core sequence~\eqref{Eq:CoreSeq}, $(\partial,\Delta^\hh,\Delta^\kk,\Omega)$ is a RUTH of $G$ on $\kk[1]\oplus\hh$. Moreover, the structural maps~\eqref{Eq:StrMaps} and~\eqref{Eq:StrMult} define a VB-groupoid structure on $G\times(\kk\oplus\hh)$ if and only if $(\partial,\Delta^\hh,\Delta^\kk,\Omega)$ is a RUTH.
\end{theorem}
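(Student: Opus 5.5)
The plan is to prove the two directions separately, working throughout in the trivialization $A\cong G\times(\kk\oplus\hh)$ induced by the unit extending splitting $\sigma$. The main tools are the interchange law $(a\Join b)+(a'\Join b')=(a+a')\Join(b+b')$, which expresses linearity of $m$ on $A\times_\hh A$, and the groupoid axioms.

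\emph{Forward direction.} We first confirm that the formulas \eqref{Eq:StrMaps} and \eqref{Eq:StrMult} really are the transported structure maps. For $s$, $t$ and $u$ this follows directly from the definitions. For $m$, we write $(g;z_0,y')=z_0\Join 0_g+\sigma_g(y')$ and $(h;z_1,y)=z_1\Join 0_h+\sigma_h(y)$, with $y'=\partial z_1+\Delta^\hh_hy$. We then split $\sigma_g(y')=\sigma_g(\partial z_1)+\sigma_g(\Delta^\hh_hy)$ and multiply using the interchange law. This produces the core terms $z_0$ and $\Delta^\kk_gz_1$, together with the defect $\sigma_{gh}(y)-\sigma_g(\Delta^\hh_hy)\Join\sigma_h(y)$, which gives $-\Omega$. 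The normalizations $\Delta^\hh_e=\mathrm{Id}$, $\Delta^\kk_e=\mathrm{Id}$ and $\Omega|_{G\times\{e\}\cup\{e\}\times G}=0$ follow because $\sigma$ extends the unit and units are neutral. The identity $\partial\Delta^\kk=\Delta^\hh\partial$ follows from applying $t$ to $\Delta^\kk_gz$ and using $t(0_{g^{-1}})=0$.

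Next come the equations involving $\Omega$. Applying $t$ to the defect gives $\Delta^\hh_{gh}-\Delta^\hh_g\Delta^\hh_h=\partial\Omega_{(g,h)}$, because $t(a\Join b)=t(a)$. The equation $\Delta^\kk_{gh}-\Delta^\kk_g\Delta^\kk_h=\Omega_{(g,h)}\partial$ comes from computing $m(gh\,;0,z,0)$ in two ways: once via associativity of $(0\Join 0)\Join z$, and once via the explicit formula. The cocycle identity for $\Omega$ is the coefficient of $y$ in the associativity identity $m(m(a,b),c)=m(a,m(b,c))$. Expanding both sides with \eqref{Eq:StrMult} gives
\[
z_0+\Delta^\kk_gz_1+\Delta^\kk_{gh}z_2-\Omega_{(g,h)}(\partial z_2+\Delta^\hh_ky)-\Omega_{(gh,k)}y
\]
on one side and
\[
z_0+\Delta^\kk_g(z_1+\Delta^\kk_hz_2-\Omega_{(h,k)}y)-\Omega_{(g,hk)}y
\]
on the other.

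\emph{Converse direction.} We reuse this expansion. Associativity holds exactly when the $z_2$-coefficients agree and the $y$-coefficients agree. The $z_2$-coefficients agree precisely by the relation $\Delta^\kk_{gh}-\Delta^\kk_g\Delta^\kk_h=\Omega_{(g,h)}\partial$, and the $y$-coefficients agree precisely by the cocycle identity. The remaining groupoid axioms are then checked one by one:
\begin{itemize}
\item $s\circ m=s$ holds trivially.
\item $t\circ m=t$ reduces to $\partial z_0+\partial\Delta^\kk_gz_1-\partial\Omega y+\Delta^\hh_{gh}y=\partial z_0+\Delta^\hh_g(\partial z_1+\Delta^\hh_hy)$, which holds by the first two RUTH relations.
\item The unit axioms hold by the normalizations.
\item Inverses are given explicitly by $(g;z,y)^{-1}=(g^{-1};-\Delta^\kk_{g^{-1}}z+\Omega_{(g^{-1},g)}y,\ \partial z+\Delta^\hh_gy)$. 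They are verified using the RUTH relations at $(g^{-1},g)$ and $(g,g^{-1})$.
\end{itemize}
All maps are smooth and fibrewise linear, and $s$ is a surjective submersion, so we obtain a VB-groupoid over $G$ with base $\hh$.

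The main obstacle is bookkeeping rather than ideas. The delicate step is the computation of the multiplication formula itself, which must correctly use the interchange law with zero arrows, for instance $z\Join 0_g=(z\Join 0_g)\Join 0_e$. A related subtlety is that $\Delta^\kk_g$ is defined through $\sigma_g(\partial z)\Join z\Join 0_{g^{-1}}$ and not through conjugation by $0_g$. I would fix sign conventions by first testing the case $\Omega=0$, where $A$ should be an honest semidirect-product action groupoid.
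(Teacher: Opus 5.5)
Your proposal is correct and is essentially the standard argument. The paper does not reprove this result but cites \cite{VB&Reps}, whose proof likewise writes the structure maps in the trivialization given by the unit extending splitting (exactly the derivation of \eqref{Eq:StrMaps} and \eqref{Eq:StrMult} recorded before the statement) and reads the RUTH equations off the groupoid axioms, with the $z_2$- and $y$-coefficients of associativity yielding the $\Delta^\kk$-relation and the cocycle equation, and conversely. Two cosmetic points: $\partial\circ\Delta^\kk=\Delta^\hh\circ\partial$ follows from $t(a\Join b)=t(a)$ rather than from $t(0_{g^{-1}})=0$, and in the converse you should verify $t\circ m=t\circ\textnormal{pr}_1$ before associativity, since that is what makes $a\Join(b\Join c)$ defined.
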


In the sequel, we study the LA-group structure through the lens of an associated RUTH. 
Before doing that, though, we use Theorem~\ref{Theo:MainEqce} to study the groupoid structure of $A\rightrightarrows\hh$.


\subsection{The groupoid structure}\label{ssec:2}
Given a VB-group $A\rightrightarrows\hh$ and a splitting of~\eqref{Eq:CoreSeq}, one finds an isomorphism of $A$ onto $G\times(\kk\oplus\hh)$ together with the structural maps~\eqref{Eq:StrMaps} and~\eqref{Eq:StrMult}. Under this isomorphism, one can express the isotropy group at $y_0\in\hh$, as
\begin{align*}
    \G_{y_0}(A) & \cong\lbrace(g;z,y)\in G\times(\kk\oplus\hh)\mid t(g;z,y)=y_0=s(g;z,y)\rbrace \\
            & =\lbrace(g;z)\in G\times\kk\mid\partial z=y_0-\Delta^\hh_gy_0\rbrace\times\lbrace y_0\rbrace.
\end{align*}
Because $(\partial,\Delta^\hh,\Delta^\kk,\Omega)$ is a RUTH, for each $g,h\in G$, $\Delta^\hh_{gh}y-\Delta^\hh_g(\Delta^\hh_hy)=\partial\Omega_{(g,h)}(y)$, the \emph{quasi-action} $\Delta^\hh$ descends to an honest action $\Delta^0:G\longrightarrow\textnormal{GL}(\coker(\partial))$. Indeed, for $y+\Img(\partial)\in\coker(\partial)$, $\Delta^0_g(y+\Img(\partial)):=\Delta^\hh_gy+\Img(\partial)$. This action is well-defined: If $y'-y=\partial z$, $$\Delta^\hh_gy'=\Delta^\hh_gy+\Delta^\hh_g(\partial z)=\Delta^\hh_gy+\partial(\Delta^\kk_g z).$$
Note that the structural map $\partial=t_e\rest{\kk}$ does not depend of the splitting, so neither does $\coker(\partial)$. In fact, any two splittings of~\eqref{Eq:CoreSeq} induce the same action $\Delta^0$.  
\begin{lemma}\label{lemma:Dela0Indep}
    $\Delta^0$ is independent of the splitting.
\end{lemma}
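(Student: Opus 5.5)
The plan is to compare two unit extending splittings $\sigma,\sigma'$ of the core sequence~\eqref{Eq:CoreSeq} directly, without passing through the full RUTH data. Their difference $\sigma'_g-\sigma_g$ maps $\hh$ into $\ker(s_g)$, since both are sections of $s$ at $g$. Using the identification $\ker(s)\cong G\times\kk$ given by right multiplication, $a\mapsto(g,a\Join 0_{g^{-1}})$, we obtain a smooth map $\phi:G\longrightarrow\hh^*\otimes\kk$ with
\[
\sigma'_g(y)=\sigma_g(y)+\phi_g(y)\Join 0_g .
\]
Because both splittings extend the unit, $\phi_e=0$, although this will not actually be needed.

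Next I apply the target map, which is fibrewise linear over $t:G\to\ast$ and a groupoid morphism:
\[
\Delta'^{\hh}_gy=t_g(\sigma'_g(y))=t_g(\sigma_g(y))+t_g(\phi_g(y)\Join 0_g).
\]
Since $t(a_0\Join a_1)=t(a_0)$, we get $t_g(\phi_g(y)\Join 0_g)=t_e(\phi_g(y))=\partial\phi_g(y)$. Therefore
\[
\Delta'^{\hh}_g y-\Delta^\hh_g y=\partial(\phi_g(y))\in\Img(\partial).
\]
Passing to $\coker(\partial)$, this gives $\Delta'^0_g=\Delta^0_g$ for every $g\in G$. The well-definedness of both actions was already checked before the lemma.

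The only point needing care is the identity $t(z\Join 0_g)=t_e(z)$. It follows from the fact that in a VB-groupoid the target of a product equals the target of the first factor, together with the linearity of $t$ on fibres. I do not expect any serious obstacle. The argument uses neither the cocycle relations of the RUTH nor the dependence of $\Delta^\kk$ and $\Omega$ on the splitting; only the target map and the description of $\ker(s)$ are involved.
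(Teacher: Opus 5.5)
Your proposal is correct and is essentially the paper's proof: write the difference of the two splittings at $g$ as $z\Join 0_g$ with $z\in\kk$, then apply $t_g$ and use $t_g(z\Join 0_g)=t_e(z)=\partial z$, so the two quasi-actions differ by an element of $\Img(\partial)$. As you note, the unit-extending property is not needed, and the paper's argument does not use it either.
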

\begin{proof}
    Let $\sigma^0$ and $\sigma^1$ be two splittings of~\eqref{Eq:CoreSeq}. Then, for all $y\in\hh$ and all $g\in G$, $\sigma^1_g(y)-\sigma^0_g(y)\in\ker(s_g)$ and there exists a unique $z\in\kk$ such that $\sigma^1_g(y)-\sigma^0_g(y)=z\Join0_g$. As a consequence, the two quasi-actions descend to the same action on $\coker(\partial)$, 
    $$t_g(\sigma^1_g(y))-t_g(\sigma^0_g(y))=t_g(z\Join 0_g)=\partial z.$$
\end{proof}
Similarly, because for each $g,h\in G$, $\Delta^\kk_{gh}z-\Delta^\kk_g(\Delta^\kk_hz)=\Omega_{(g,h)}(\partial z)$; hence, the \emph{quasi-action} $\Delta^\kk$ restricts to an honest action $\Delta^1:G\longrightarrow\textnormal{GL}(\ker(\partial))$. As in Lemma~\ref{lemma:Dela0Indep}, since the structural map $\partial=t_e\rest{\kk}$ does not depend of the splitting, neither does $\ker(\partial)$ and any two splittings of~\eqref{Eq:CoreSeq} induce the same action $\Delta^1$.  
\begin{lemma}\label{lemma:Dela1Indep}
    $\Delta^1$ is independent of the splitting.
\end{lemma}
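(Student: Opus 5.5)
The plan is to work directly from the formula for the core quasi-action, $\Delta^\kk_g(z)=\sigma_g(t_e(z))\Join z\Join 0_{g^{-1}}$, and to check that on $\ker(\partial)$ the only ingredient that involves the splitting drops out. We argue in the spirit of Lemma~\ref{lemma:Dela0Indep}, but the situation is dual: there the dependence was killed by passing to the quotient $\coker(\partial)$, while here it is killed by restricting to the subspace $\ker(\partial)$.

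First, fix $z\in\ker(\partial)$, so that $t_e(z)=\partial z=0$. Since any splitting $\sigma$ is a map of vector bundles, it is fibrewise linear, and hence $\sigma_g(t_e(z))=\sigma_g(0)=0_g$. Before substituting, we check composability. We have $s(0_g)=0=\partial z=t_e(z)$, so $(0_g,z)$ is composable. Also $s_e(z)=0=t(0_{g^{-1}})$ because $z\in\kk$, so $(z,0_{g^{-1}})$ is composable. Therefore
\begin{align*}
\Delta^1_g(z)=\Delta^\kk_g(z)=0_g\Join z\Join 0_{g^{-1}},
\end{align*}
and this expression involves only the multiplication of $A$ and its zero section, not $\sigma$. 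It follows that two splittings $\sigma^0,\sigma^1$ produce the same map on $\ker(\partial)$.

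As a cross-check, we can also argue by the difference method of Lemma~\ref{lemma:Dela0Indep}. Write $\sigma^1_g(y)-\sigma^0_g(y)=\omega_g(y)\Join 0_g$, where $\omega_g:\hh\to\kk$ is linear. Using the interchange law of the VB-groupoid (multiplication is linear on $A\times_\hh A$), the difference of the two quasi-actions should be
\begin{align*}
\Delta^{\kk,1}_g(z)-\Delta^{\kk,0}_g(z)=\omega_g(\partial z)\Join 0_g\Join 0_{g^{-1}}\Join 0_{g^{-1}}.
\end{align*}
This is linear in $\partial z$, so it vanishes on $\ker(\partial)$.

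The only point requiring care is this cross-check. When splitting the product $\sigma^1_g(\partial z)\Join z$ as a sum, the summands must remain composable, and we must use that multiplication is a vector bundle map so that sums distribute over $\Join$. The direct substitution $\sigma_g(0)=0_g$ avoids this issue entirely, so it is the argument we would present, with the difference computation as a remark. Finally, the image $\Delta^1_g(z)$ lies in $\ker(\partial)$ by the RUTH relation $\partial\circ\Delta^\kk=\Delta^\hh\circ\partial$. The resulting action on $\ker(\partial)$ is therefore canonical.
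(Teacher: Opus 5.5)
Your main argument is correct and is exactly the paper's proof: for $z\in\ker(\partial)$ every splitting gives $\sigma_g(\partial z)=0_g$, so $\Delta^\kk_g(z)=0_g\Join z\Join 0_{g^{-1}}$ does not depend on $\sigma$. In your optional cross-check the third factor should be $0_e$ rather than $0_{g^{-1}}$, because it comes from $z-z\in A_e$; as written, the product lies over $g^{-1}$ instead of $e$, and with the fix the difference is just $\omega_g(\partial z)$, which still vanishes on $\ker(\partial)$.
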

\begin{proof}
    Let $\sigma^0$ and $\sigma^1$ be two splittings of~\eqref{Eq:CoreSeq}. Then, for all $z\in\ker(\partial)$ and all $g\in G$, 
    $$\sigma^1_g(\partial z)\Join z\Join0_{g^{-1}}-\sigma^0_g(\partial z)\Join z\Join0_{g^{-1}}=0_g\Join z\Join0_{g^{-1}}-0_g\Join z\Join0_{g^{-1}}=0_e.$$
\end{proof}

In the statement of the following lemma, we write $\delta:C^n(G,\kk)\longrightarrow C^{n+1}(G,\kk)$ for the \textbf{simplicial differential} of a group $G$ with values in a quasi-representation $\Delta^\kk$. Note that in spite of our calling it a differential, $\delta$ does not square to zero. For instance, given a 1-cochain $Z\in C(G,\kk)$ and $g,h,k\in G$, 
\begin{align}\nonumber 
    (\delta^2Z)(g,h,k) & =\Delta^\kk_g(\delta Z(h,k))-\delta Z(gh,k)+\delta Z(g,hk)-\delta Z(g,h) \\ \label{Eq:NoDiff}
        & =\Delta^\kk_g\Delta^\kk_hZ_k-\Delta^\kk_{gh}Z_k=-\Omega_{(g,h)}(\partial Z_k)\neq 0.
\end{align}
\begin{prop}\label{Prop:GpdIsotropies}
    Let $A\rightrightarrows\hh$ be a VB-group over $G$ with core $\kk$ and let $y\in\hh$. Then, there is a short exact sequence of Lie groups
\begin{eqnarray}\label{Eq:IsoCoreSeq}
\xymatrix{
(1) \ar[r] & \ker(\partial) \ar[r] & \G_y(A) \ar[r]^{\pi\quad} & G^{\Delta^0}_{y+\Img(\partial)} \ar[r] & (1),
}
\end{eqnarray}
where $G^{\Delta^0}_{y+\Img(\partial)}$ is the \emph{stabilizer subgroup} of $y+\Img(\partial)\in\coker(\partial)$ for the action $\Delta^0$, and $\pi$ is the restriction of the bundle map $A\to G$.
Moreover, if the sequence~\eqref{Eq:IsoCoreSeq} splits, $\G_y(A)$ is isomorphic to the semi-direct product of $G^{\Delta^0}_{y+\Img(\partial)}$ and $\ker(\partial)$ with respect to the action $\Delta^1$ twisted by a 2-cocycle. More precisely, since any splitting is determined by a map $Z\in C(G^{\Delta^0}_{y+\Img(\partial)},\kk)$, the 2-cocycle is explicitly given by $\delta Z-\Omega(y)\in C^2(G^{\Delta^0}_{y+\Img(\partial)},\ker(\partial))$.
\end{prop}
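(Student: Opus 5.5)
Work in the trivialization $A\cong G\times(\kk\oplus\hh)$ given by a unit extending splitting, so that by Theorem~\ref{Theo:MainEqce} the structural maps are \eqref{Eq:StrMaps} and \eqref{Eq:StrMult} for a RUTH $(\partial,\Delta^\hh,\Delta^\kk,\Omega)$. As computed above, $\G_y(A)=\lbrace(g;z,y)\mid \partial z=y-\Delta^\hh_gy\rbrace$.

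\emph{The projection.} The map $\pi(g;z,y)=g$ is a group homomorphism, because by \eqref{Eq:StrMult} the $G$-component of a product is $gh$. Its image lies in $G^{\Delta^0}_{y+\Img(\partial)}$, since $\Delta^\hh_gy-y\in\Img(\partial)$. It is onto, because for $g$ in the stabilizer we can pick $z$ with $\partial z=y-\Delta^\hh_gy$. Its kernel is $\lbrace(e;z,y)\mid\partial z=y-\Delta^\hh_e y=0\rbrace$. On this kernel the multiplication reduces to $(e;z_0,y)(e;z_1,y)=(e;z_0+z_1,y)$, using $\Delta^\kk_e=\textnormal{Id}$ and $\Omega_{(e,e)}=0$. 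Hence the kernel is the vector group $\ker(\partial)$.

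\emph{Smoothness.} $\G_y(A)$ is the preimage of $0$ under the smooth map $(g,z)\mapsto\partial z-y+\Delta^\hh_gy$, and it is a closed subgroup of the Lie group $A$ (or $\ker(s)$). By Cartan's theorem it is therefore an embedded Lie subgroup, and similarly the stabilizer is a closed subgroup of $G$. This gives \eqref{Eq:IsoCoreSeq} as a sequence of Lie groups.

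\emph{The split case.} A splitting of \eqref{Eq:IsoCoreSeq}, or more generally any set-theoretic section, has the form $g\mapsto(g;Z_g,y)$ with $Z\in C(G^{\Delta^0}_{y+\Img(\partial)},\kk)$ satisfying $\partial Z_g=y-\Delta^\hh_gy$. Use it to write every element uniquely as $(g;w+Z_g,y)$ with $w\in\ker(\partial)$. Then by \eqref{Eq:StrMult},
$$(g;w_0+Z_g,y)(h;w_1+Z_h,y)=(gh;\,w_0+\Delta^\kk_gw_1+Z_g+\Delta^\kk_gZ_h-\Omega_{(g,h)}(y),\,y),$$
whose $\kk$-part is $w_0+\Delta^1_gw_1+c(g,h)+Z_{gh}$ with
$$c(g,h)=\Delta^\kk_gZ_h-Z_{gh}+Z_g-\Omega_{(g,h)}(y)=(\delta Z-\Omega(y))(g,h).$$
The product is therefore $(w_0+\Delta^1_gw_1+c(g,h),gh)$, the twisted semidirect product. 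If $Z$ is a homomorphic section then $c=0$; for a general section $c$ is a genuine 2-cocycle. That the action on $\ker(\partial)$ is $\Delta^\kk$ restricted, that is $\Delta^1$, is already known. One must also check that $c$ takes values in $\ker(\partial)$. Applying $\partial$ and using $\partial\Delta^\kk=\Delta^\hh\partial$ and $\partial\Omega=\Delta^\hh_{gh}-\Delta^\hh_g\Delta^\hh_h$ gives
$$\Delta^\hh_g(y-\Delta^\hh_hy)-(y-\Delta^\hh_{gh}y)+(y-\Delta^\hh_gy)-\Delta^\hh_{gh}y+\Delta^\hh_g\Delta^\hh_hy=0.$$

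\emph{Main obstacle.} The hardest step is verifying the cocycle identity for $c$, given that $\delta^2\neq0$ by \eqref{Eq:NoDiff}. We would compute $\delta c$ with respect to $\Delta^1$ directly. By \eqref{Eq:NoDiff}, the term $\delta(\delta Z)(g,h,k)$ equals $-\Omega_{(g,h)}(\partial Z_k)=-\Omega_{(g,h)}(y-\Delta^\hh_ky)$. The term $\delta(\Omega(y))$ is computed with the RUTH relation $\Delta^\kk_g\Omega_{(h,k)}-\Omega_{(gh,k)}+\Omega_{(g,hk)}=\Omega_{(g,h)}\Delta^\hh_k$, giving $\delta(\Omega(y))(g,h,k)=\Omega_{(g,h)}(\Delta^\hh_ky)-\Omega_{(g,h)}(y)$. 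These two cancel exactly, so $\delta c=0$. Alternatively, the cocycle property follows automatically from associativity of $\G_y(A)$.
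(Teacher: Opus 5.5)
Your algebraic argument is correct. Its first half (image of $\pi$ in the stabilizer, surjectivity, kernel) is the paper's. You also check that the group law on $\ker(\pi)$ is plain addition, which the paper leaves implicit.

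For the split case your route differs from the paper's. The paper computes the conjugation $(g;Z_g,y)(e;v,y)(g;Z_g,y)^{-1}$ and the defect $(g;Z_g,y)(h;Z_h,y)(gh;Z_{gh},y)^{-1}$. This needs the inverse formula and the cocycle equation for $\Omega$ at $(g,g^{-1},g)$ and $(gh,(gh)^{-1},gh)$. In exchange, the action and cocycle appear as the standard extension data of a section, so $\ker(\partial)$-valuedness and closedness come for free. You instead observe $(g;w+Z_g,y)=(e;w,y)(g;Z_g,y)$ and transport the product along $(w,g)\mapsto(g;w+Z_g,y)$. Then $\Delta^1$ and $c=\delta Z-\Omega(y)$ are read off with no inversions. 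You verify the two properties of $c$ by hand, correctly; your identity $\delta(\delta Z)=\delta(\Omega(y))=-\Omega_{(g,h)}(\partial Z_k)$ is precisely Remark~\ref{Rmk:WhyNotOmega}. As you note, both properties also follow from the group structure of $\G_y(A)$.

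Your observation that $c=0$ for a homomorphic section usefully clarifies the statement. The twisting cocycle only carries information for arbitrary smooth sections, and such sections always exist. For example, take $Z_g=P(y-\Delta^\hh_gy)$ for any linear $P:\hh\to\kk$ with $\partial(P(v))=v$ for all $v\in\Img(\partial)$.

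The step that does not stand as written is the smoothness paragraph.
\begin{itemize}
\item $A$ is a Lie groupoid, not a Lie group.
\item $\ker(s)=s^{-1}(0)$ is a source fibre rather than a group: two of its elements are composable only if the second has target $0$.
\end{itemize}
So there is no ambient Lie group in which $\G_y(A)$ sits as a closed subgroup, and Cartan's theorem does not apply. Being the zero set of $(g,z)\mapsto\partial z-y+\Delta^\hh_gy$ does not give a submanifold either, without a rank condition. The paper takes this point for granted.

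You can repair it in either of two ways. One is to invoke the standard fact that isotropy groups of a Lie groupoid are closed embedded Lie subgroups of the source fibre, since the restriction of $t$ to $s^{-1}(y)$ has constant rank. The other is to use the section $Z$ just described. The map $(w,g)\mapsto(g;w+Z_g,y)$ is a closed embedding of $\ker(\partial)\times G^{\Delta^0}_{y+\Img(\partial)}$ with image $\G_y(A)$; here the stabilizer is closed in $G$, as you say. This gives the Lie group structure and upgrades your bijection to an isomorphism of Lie groups.
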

\begin{proof}
    To see that $\pi$ is well-defined, we prove that if $a\in\G_y(A)$, $\pi(a)\in G^{\Delta^0}_{y+\Img(\partial)}$. Fixed a unit extending a splitting of~\eqref{Eq:CoreSeq}, $a$ corresponds uniquely to a $(g;z,y)\in G\times(\kk\oplus\hh)$ such that $\partial z=y-\Delta_g^\hh y$, then $$\Delta^0_g(y+\Img(\partial))=\Delta_g^\hh y+\Img(\partial)\overset{!}{=}y-\partial z+\Img(\partial)=y+\Img(\partial).$$
    That $\pi$ is onto is straightforward. Indeed, if $g\in G^{\Delta^0}_{y+\Img(\partial)}$, then $\Delta^0_g(y+\Img(\partial))=\Delta_g^\hh y+\Img(\partial)=y+\Img(\partial)$; therefore, there exists a $z\in\kk$ such that $y=\Delta^\hh_gy+\partial z$, and there is a unique element in $\G_y(A)$ corresponding to $(g;z,y)$. Now, under the isomorphism, 
    \begin{align*}
    \ker(\pi) & =\lbrace a\in\G_y(A)\mid\pi(a)=e\rbrace\cong\lbrace(e;z,y)\in G\times(\kk\oplus\hh)\mid\Delta^\hh_ey+\partial z=y\rbrace \\
              & =\lbrace e\rbrace\times\lbrace z\in\kk\vert\partial z=0\rbrace\times\lbrace y\rbrace\cong\ker(\partial).
    \end{align*}
    The last statement follows now, as a splitting under the isomorphism is given by $g\longmapsto (g;Z_g,y)$ for some $Z\in C(G^{\Delta^0}_{y+\Img(\partial)},\kk)$ such that $\partial Z_g=y-\Delta^\hh_gy$. Any such splitting induces a representation $\rho^Z$ by conjugation in $\G_y(A)$ and a cocycle $\mu^Z\in C^2(G^{\Delta^0}_{y+\Img(\partial)},\ker(\partial))$. Letting $g\in G^{\Delta^0}_{y+\Img(\partial)}$ and $v\in\ker(\partial)$ and using formula~\eqref{Eq:StrMult}, one computes
    \begin{align*}
        \rho^Z_g(v) & =(g;Z_g,y)\Join(e;v,y)\Join(g;Z_g,y)^{-1} \\
            & =\big(g;Z_g+\Delta^\kk_gv-\Omega_{(g,e)}(y),y\big)\Join\big(g^{-1};-\Delta^\kk_{g^{-1}}Z_g+\Omega_{(g^{-1},g)}(y),y\big) \\
            & =\big(e;Z_g+\Delta^1_gv+\Delta^\kk_g(-\Delta^\kk_{g^{-1}}Z_g+\Omega_{(g^{-1},g)}(y))-\Omega_{(g,g^{-1})}(y),y\big) \\
            & =(e;\Delta^1_gv+\Omega_{(g,g^{-1})}(\partial Z_g)+\Delta^\kk_g\Omega_{(g^{-1},g)}(y)-\Omega_{(g,g^{-1})}(y),y\big)=(e;\Delta^1_gv,y).
    \end{align*}
    Here, we used the cocycle equation for the curvature $\Omega$ evaluated at $(g,g^{-1},g)$ and that both $\Omega_{(g,e)}$ and $\Omega_{(e,g)}$ vanish.
    The value of the cocycle $\mu^Z$ is given for $g,h\in G^{\Delta^0}_{y+\Img(\partial)}$ by $\mu^Z(g,h)=(g;Z_g,y)\Join(h;Z_h,y)\Join(gh;Z_{gh},y)^{-1}$, which equals
    \begin{eqnarray*}
        \big(gh;Z_g+\Delta^\kk_gZ_h-\Omega_{(g,h)}(y),y\big)\Join\big((gh)^{-1};-\Delta^\kk_{(gh)^{-1}}Z_{gh}+\Omega_{((gh)^{-1},gh)}(y),y\big). 
    \end{eqnarray*}
    The $\kk$-entry of the latter is then computed to be
    \begin{align*}
        Z & _g+\Delta^\kk_gZ_h-\Omega_{(g,h)}(y)+\Delta^\kk_{gh}(-\Delta^\kk_{(gh)^{-1}}Z_{gh}+\Omega_{((gh)^{-1},gh)}(y))-\Omega_{(gh,(gh)^{-1})}(y) \\
        = & \delta Z(g,h)-\Omega_{(g,h)}(y)+\Omega_{(gh,(gh)^{-1})}(\partial Z_{gh})+\Delta^\kk_{gh}\Omega_{((gh)^{-1},gh)}(y)-\Omega_{(gh,(gh)^{-1})}(y), 
    \end{align*}
    and the result follows using the same relation as before.
\end{proof}
\begin{remark}\label{Rmk:WhyNotOmega}
    Reading formula~\eqref{Eq:StrMult}, one would be tempted to guess that $-\Omega(y)$ is the 2-cocycle twisting the semi-direct product in Proposition~\ref{Prop:GpdIsotropies}; however, $\Omega$ has no reason to be $\ker(\partial)$-valued, nor is it a true cocycle. Note that it is precisely after adding $\delta Z$ that these ailments are remedied as, by definition, for all $g,h\in G$,
    \begin{align*}
        \partial\Omega_{(g,h)}(y) & =\Delta^\hh_{gh}y-\Delta^\hh_g\Delta^\hh_hy=y-\partial Z_{gh}-\Delta^\hh_g(y-\partial Z_h) \\
            & =y-\partial Z_{gh}-(y-\partial Z_g)+\partial \Delta^\kk_gZ_h=\partial(\delta Z(g,h)).
    \end{align*}
    Similarly, for all $g,h,k\in G$, one computes
    \begin{align*}
        \delta(\Omega(y))(g,h,k) & =\Delta^1_g\Omega_{(h,k)}(y)-\Omega_{(gh,k)}(y)+\Omega_{(g,hk)}(y)-\Omega_{(g,h)}(y) \\
            & =\Delta^1_g\Omega_{(h,k)}(y)-\Omega_{(gh,k)}(y)+\Omega_{(g,hk)}(y)-\Omega_{(g,h)}(\Delta^\hh_ky+\partial Z_k);
    \end{align*}
    hence, out of the cocycle equation for the curvature, one sees that $\delta(\Omega(y))(g,h,k)=-\Omega_{(g,h)}(\partial Z_k)$, which matches Eq.~\eqref{Eq:NoDiff}.
\end{remark}
Recall that, in a Lie groupoid, the source fibre over a point $p$ is a principal bundle over the orbit through $p$ with structure group the isotropy at $p$. In so, for $y_0\in\hh$, $s^{-1}(y_0)\cong\lbrace(g;z,y)\in G\times(\kk\oplus\hh)\vert y=y_0\rbrace=G\times\kk\times\lbrace y_0\rbrace$ is a $\G_{y_0}(A)$-principal bundle over $O^A_{y_0}$. Further note that one can think of $\kk$ as a $\ker(\partial)$-principal bundle over $\Img(\partial)$, and similarly, of $G$ as a $G^{\Delta^0}_{y+\Img(\partial)}$-principal bundle over $O^{\Delta^0}_{y+\Img(\partial)}$, the orbit through ${y+\Img(\partial)}\in\coker(\partial)$ under the action $\Delta^0$. Ultimately, this yields the following result.
\begin{prop}\label{Prop:GpdOrbits}
    Let $A\rightrightarrows\hh$ be a VB-group and let $y\in\hh$. If $O^{\Delta^0}_{y+\Img(\partial)}$ is the orbit of ${y+\Img(\partial)}\in\coker(\partial)$ under $\Delta^0$, then $O^A_y$ is an $\Img(\partial)$-principal bundle over $O^{\Delta^0}_{y+\Img(\partial)}$. Moreover, 
\begin{eqnarray}\label{Eq:Orbit}
O_y^A=\cup O^{\Delta^0}_{y+\Img(\partial)}=\lbrace y'\in\hh\mid y'+\Img(\partial)\in O^{\Delta^0}_{y+\Img(\partial)}\rbrace. 
\end{eqnarray}
\end{prop}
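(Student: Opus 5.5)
We work throughout with a fixed unit extending splitting, so that Theorem~\ref{Theo:MainEqce} identifies $A$ with $G\times(\kk\oplus\hh)$ carrying the structural maps~\eqref{Eq:StrMaps} and~\eqref{Eq:StrMult}.

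\emph{Set-theoretic description of the orbit.} Arrows with source $y$ are exactly the triples $(g;z,y)$ with $g\in G$ and $z\in\kk$ arbitrary. Their targets are $\partial z+\Delta^\hh_g y$, so
\[
O^A_y=\lbrace \Delta^\hh_g y+\partial z\mid g\in G,\ z\in\kk\rbrace .
\]
We then check both inclusions in~\eqref{Eq:Orbit}.
\begin{itemize}
\item If $y'=\Delta^\hh_g y+\partial z$, then $y'+\Img(\partial)=\Delta^0_g(y+\Img(\partial))$, so $y'$ lies in the right-hand side.
\item Conversely, if $y'+\Img(\partial)=\Delta^0_g(y+\Img(\partial))$ for some $g$, then $y'-\Delta^\hh_g y\in\Img(\partial)$. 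Writing this difference as $\partial z$, we get $y'=t(g;z,y)$.
\end{itemize}
This gives $O^A_y=q^{-1}\big(O^{\Delta^0}_{y+\Img(\partial)}\big)$, where $q:\hh\to\coker(\partial)$ is the quotient map. The set $q^{-1}(O)$ is the union of the affine cosets lying over the points of $O$, which is how the expression $\cup O^{\Delta^0}_{y+\Img(\partial)}$ should be read. By Lemma~\ref{lemma:Dela0Indep} this description is independent of the splitting.

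\emph{Principal bundle structure.} Let $p:=q\rest{O^A_y}:O^A_y\to O^{\Delta^0}_{y+\Img(\partial)}$. The vector group $\Img(\partial)$ acts on $O^A_y$ by translation; this preserves $O^A_y$ because $O^A_y$ is saturated by $\Img(\partial)$-cosets. The action is free, and its orbits are exactly the fibres of $p$.

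For smoothness and local triviality we use the identification $O^{\Delta^0}_{y+\Img(\partial)}\cong G/G^{\Delta^0}_{y+\Img(\partial)}$, which carries the homogeneous smooth structure. Choose a local section $\tau$ of $G\to G/G^{\Delta^0}_{y+\Img(\partial)}$. Then the map
\[
(\bar g,w)\longmapsto \Delta^\hh_{\tau(\bar g)}y+w,\qquad w\in\Img(\partial),
\]
gives a local trivialization of $p$. Alternatively, one can present $O^A_y$ as the quotient of the source fibre $s^{-1}(y)=G\times\kk$ by $\G_y(A)$, and present $O^{\Delta^0}_{y+\Img(\partial)}$ as $G/G^{\Delta^0}_{y+\Img(\partial)}$. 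Then $p$ is induced by the projection $G\times\kk\to G$, which is equivariant along $\pi$ in~\eqref{Eq:IsoCoreSeq}. The fibre is then $\kk/\ker(\partial)\cong\Img(\partial)$.

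\emph{Main obstacle.} The hardest step is the smoothness and compatibility in the previous paragraph. Orbits carry their immersed (leaf) manifold structures, so $O^A_y$ must be given the structure $G\times\kk/\G_y(A)$ rather than the subspace topology of $\hh$. We must then check that the trivializations above are diffeomorphisms for these structures. The plan is to deduce this from the exact sequence of Proposition~\ref{Prop:GpdIsotropies}: the quotient by $\G_y(A)$ can be taken in stages, first by $\ker(\partial)$ acting on the $\kk$-factor, giving $G\times\Img(\partial)$, and then by $G^{\Delta^0}_{y+\Img(\partial)}$. The delicate point is that the second stage acts on the $\Img(\partial)$-factor only affinely, twisted by $Z$. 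So we must verify, using the relation $\partial Z_g=y-\Delta^\hh_g y$, that this twisted action still produces an $\Img(\partial)$-bundle with translation action over $G/G^{\Delta^0}_{y+\Img(\partial)}$.
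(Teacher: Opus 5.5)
Your proof of \eqref{Eq:Orbit}, and your identification of the fibres of $y'\mapsto y'+\Img(\partial)$ with $\Img(\partial)$-cosets, are exactly the paper's argument. The paper stops at this set-theoretic level and never addresses smooth structures, so your local trivialization $(\bar g,w)\mapsto\Delta^\hh_{\tau(\bar g)}y+w$ is a correct addition beyond it.

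The ``main obstacle'' is milder than you expect. Smoothness of this trivialization and of its inverse, for the orbit's manifold structure, can be checked after composing with the surjective submersion $t\colon s^{-1}(y)\cong G\times\kk\to O^A_y$. In your staged quotient no $Z$ is needed: after dividing by $\ker(\partial)$, the residual action of $G^{\Delta^0}_{y+\Img(\partial)}$ on $G\times\Img(\partial)$ is simply $(g,w)\cdot k=(gk,\,w+\Delta^\hh_gy-\Delta^\hh_{gk}y)$.
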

\begin{proof}
    We start by proving that Eq.~\eqref{Eq:Orbit} holds. Splitting~\eqref{Eq:CoreSeq}, one computes: \\
    $(\subseteq)$ Let $y'\in O^A_y$. Then, there exists $(g;z)\in G\times\kk$ such that $y'=\Delta^\hh_gy+\partial z$. Consequently, $y'+\Img(\partial)=\Delta^\hh_gy+\partial z+\Img(\partial)=\Delta^\hh_gy+\Img(\partial)=\Delta^0_g(y+\Img(\partial))$, and $y'+\Img(\partial)\in O^{\Delta^0}_{y+\Img(\partial)}$. \\
    $(\supseteq)$ Let $y'\in\cup O^{\Delta^0}_{y+\Img(\partial)}$, then $y'+\Img(\partial)\in O^{\Delta^0}_{y+\Img(\partial)}$. In other words, there exists a $g\in G$ such that $y'+\Img(\partial)=\Delta^0_g(y+\Img(\partial))=\Delta^\hh_gy+\Img(\partial)$; consequently, there exists a $z\in\kk$ such that $y'=\Delta^\hh_gy+\partial z$, and $y'\in O_y^A$. \\
    This shows that the quotient map $O^A_y\to O^{\Delta^0}_{y+\Img(\partial)}:y'\mapsto y'+\Img(\partial)$ is well-defined. This map is clearly surjective and its fibres are  $\Img(\partial)$.
\end{proof}


\section{Consequences of being LA}\label{sec:3}
Recall that a morphisms of vector bundles $F:A'\longrightarrow A$ between Lie algebroids $A'$ and $A$ covering $f:M'\longrightarrow M$ is said to be \textbf{anchored} if $\rho^{A}\circ F=df\circ\rho^{A'}$. An anchored map $F$ is a \textbf{Lie algebroid morphism} if the map
\begin{eqnarray*}
\xymatrix{
F^!:A' \ar[r] & f^!A:a \ar@{|->}[r] & (\rho^{A'}(a),F(a)),
}
\end{eqnarray*}
induces a map of Lie algebras at the level of sections. More precisely, if the map 
\begin{eqnarray*}
\xymatrix{
F^!:\Gamma(A') \ar[r] & \Gamma(f^!A):a \ar@{|->}[r] & (\rho^{A'}(a),F_*(a)),
}
\end{eqnarray*}
where $\rho^{A'}(a)\in\XX(M')$ and $F_*(a)\in\Gamma(f^*A)\cong C(M')\otimes\Gamma(A)$, defines a Lie algebra homomorphism.
Given an LA-group $A\rightrightarrows\hh$ over $G$ with core $\kk$ and a unit extending splitting $\sigma$ of~\eqref{Eq:CoreSeq}, there is an isomorphism $A\cong G\times(\kk\oplus\hh)$. In this section, we study the consequences of demanding that the structural maps~\eqref{Eq:StrMaps} and~\eqref{Eq:StrMult} be Lie algebroid morphisms.


\subsection{Compatibility with the anchors}\label{ssec-anchors}
In this subsection, we study the consequences of the structural maps being anchored. That the source and target maps, $s$ and $t$ are anchored is inconsequential. Recall from~\eqref{Eq:unitSplit} that $A_e$ is canonically isomorphic to $\kk\oplus\hh$. If $u$ is an anchored map, it follows that $\rho_e\rest{u(\hh)}\equiv 0$. The following is an easy consequence. 
\begin{lemma}\label{lemma:uAnchord}
If $A\rightrightarrows\hh$ is an LA-group with core $\kk$, the isotropy Lie algebra at the identity element $\gg_e(A)$ fits in a short exact sequence of vector spaces
   \begin{eqnarray}\label{Eq:IsotrpyAlgExt}
\xymatrix{
(0) \ar[r] & \mathfrak{k} \ar[r] & \gg_e(A) \ar[r]^{\quad s_e} & \hh \ar[r] & (0),
}
    \end{eqnarray}
where $\mathfrak{k}:=\gg_e(A)\cap\kk=\ker(\rho_e\rest{\kk})$.  
\end{lemma}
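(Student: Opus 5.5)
The plan is to restrict the canonically split sequence~\eqref{Eq:unitSplit} to the subspace $\gg_e(A)=\ker(\rho_e)\leq A_e$ and check exactness term by term. The only input beyond linear algebra is the observation just made: because the unit map $u:\hh\to A$ is anchored and covers the constant map $\ast\to G$ with value $e$, one has $\rho_e\circ u=d(\text{const})\circ\rho^\hh=0$. Hence $u(\hh)\subseteq\gg_e(A)$. (Here $\hh$ is viewed as a Lie algebroid over a point, so its anchor is zero and the differential of the constant map vanishes anyway.)

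First, the map $s_e\rest{\gg_e(A)}:\gg_e(A)\to\hh$ is surjective. Indeed, for $y\in\hh$ we have $u(y)\in\gg_e(A)$ and $s_e(u(y))=y$. This is the only place where the LA condition enters, and I expect it to be the sole nontrivial step, though it is already handled by the anchoredness of $u$.

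Second, the kernel. We have $\ker(s_e\rest{\gg_e(A)})=\gg_e(A)\cap\ker(s_e)=\gg_e(A)\cap\kk$. Since $\kk\leq A_e$, this intersection equals $\{z\in\kk\mid\rho_e(z)=0\}=\ker(\rho_e\rest{\kk})$, which is exactly $\mathfrak{k}$. The inclusion $\mathfrak{k}\hookrightarrow\gg_e(A)$ is the restriction of the injection $\kk\hookrightarrow A_e$, so it is injective.

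These two steps give exactness at all three spots. In addition, $u$ provides a splitting, so in fact $\gg_e(A)\cong\mathfrak{k}\oplus\hh$ as vector spaces, via $a\mapsto(a-u(s_e(a)),s_e(a))$. Note that $a-u(s_e(a))$ lies in $\gg_e(A)\cap\kk$ because both $a$ and $u(s_e(a))$ lie in $\gg_e(A)$.
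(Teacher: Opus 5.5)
Your proof is correct. It differs from the paper's mainly in packaging, but the difference is worth noting.

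The paper applies the snake lemma to the morphism of short exact sequences induced by $\rho_e$, going from $0\to\kk\to A_e\to\hh\to 0$ to $0\to\gg\to\gg\to 0\to 0$. The kernel row is $0\to\mathfrak{k}\to\gg_e(A)\to\hh$. The connecting map $\hh\to\gg/\rho_e(\kk)$ vanishes because anchoredness of $u$ forces $\rho_e(A_e)=\rho_e(\kk)$.

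You instead use the stronger consequence $\rho_e\circ u=0$ directly, so that $u$ is a section of $s_e$ with values in $\gg_e(A)$. This gives surjectivity by hand. As you observe, it also gives an explicit linear splitting $\gg_e(A)\cong\mathfrak{k}\oplus\hh$, which is what later underlies Corollary~\ref{Cor:IsoAt e}.

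The paper's formulation, on the other hand, isolates the exact obstruction: $\rho_e(A_e)=\rho_e(\kk)$ is equivalent to surjectivity of $s_e$ on $\gg_e(A)$. Running the same snake-lemma argument at an arbitrary $g\in G$, with $\rho_g$ and $\ker(s_g)\cong\kk$, yields a connecting map $\hh\to\gg/\ll$ equal to $\bar{\alpha}_g$. This recovers the sequence of Proposition~\ref{Prop:ALgbdIsotropies}, with image $\ker(\bar{\alpha}_g)$. Your section-based argument does not transfer to $g\neq e$ as stated: $\sigma_g$ need not take values in the isotropy, and it must be corrected by a map $Z$ as in that proposition.
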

\begin{proof}
    Since $u$ is an anchored map, $\rho_e(\kk)=\rho_e(A_e)$ and the long exact sequence induced by the map of short exact sequences 
\begin{eqnarray}\label{Eq:mapOfCx}
\xymatrix{
(0) \ar[r] & \kk \ar[r]\ar[d]_{\rho_e} & A_e \ar[r]^{s_e}\ar[d]^{\rho_e} & \hh \ar[r]\ar[d] & (0) \\
(0) \ar[r] & \gg \ar@{=}[r] & \gg \ar[r] & (0) \ar[r] & (0),
}
\end{eqnarray}
is precisely~\eqref{Eq:IsotrpyAlgExt}.
\end{proof}
Now, given $(X,Y)\in T_gG\oplus T_hG$, we write their multiplication as $X\ast Y:=d_{(g,h)}m_G(X,Y)=d_gR_h(X)+d_hL_g(Y)\in T_{gh}G$. That the multiplication is anchored means that the anchor is a Lie groupoid homomorphism, i.e. For each pair $(a_0,a_1)\in A_g\times_\hh A_h$,
\begin{align*}
     \rho_{gh}(a_0\Join a_1) & =\rho_g(a_0)\ast\rho_h(a_1)=d_gR_h(\rho_g(a_0))+d_hL_g(\rho_h(a_1))\in T_{gh}G.
\end{align*}
Let $\gg$ be the Lie algebra of $G$. Observe, that if $x^G\in\XX^R(G)\leq\XX(G)$ is the right-invariant vector field associated to $x\in\gg$, then, for $z\in\kk$ and $g\in G$,
\begin{align}\label{Eq:RtoR}
    \rho_g(z^R_g) & =  \rho_g(z\Join 0_g)=\rho_e(z)\ast 0_g=\rho_e(z)^G_g\in T_gG.
\end{align}
Using the isomorphism induced by $\sigma$ and the right-trivialization of $TG\cong G\times\gg$, one can rewrite the anchor $\rho:A\longrightarrow TG$ as
\begin{eqnarray*}
    \xymatrix{
    \rho:G\times(\kk\oplus\hh) \ar[r] & G\times\gg:(g;z,y) \ar@{|->}[r] & (g;\rho_e(z)+\alpha_g(y)),
    }
\end{eqnarray*}
where $\alpha:G\to\hh^*\otimes\gg$ is given by $\alpha_g(y):=\rho_g(\sigma_g(y))\ast0_{g^{-1}}$ for $g\in G$ and $y\in\hh$.
Note that $\alpha_e\equiv0$. Since for all $g\in G$, $\ker(s_g)\leq A_g$ corresponds to $\lbrace g\rbrace\times\kk\leq \lbrace g\rbrace\times(\kk\oplus\hh)$ under the isomorphism, one concludes that $\rho_g\big{(}\ker(s_g)\big{)}\cong\rho_e(\kk)$. In fact, under the isomorphism, $\rho_g(A_g)$, the image of the anchor map at $g$ is isomorphic to $\rho_e(\kk)+\alpha_g(\hh)$.

Before we move on to the compatibility with the brackets, the following result records the joint behavior of the RUTH $(\partial,\Delta^\hh,\Delta^\kk,\Omega)$ and the anchor map induced by a unit extending splitting $\sigma$. 

\begin{prop}\label{Prop:Rho&RUTH}
    Let $A\rightrightarrows\hh$ be an anchored VB-group over $G$ with core $\kk$ and let $(\partial,\Delta^\hh,\Delta^\kk,\Omega)$ be the RUTH induced by a unit extending splitting $\sigma$ of~\eqref{Eq:CoreSeq}. Then, the anchor is a Lie groupoid homomorphism if and only if $\alpha_e$ vanishes identically, $\rho_g(z\Join0_g)=\rho_e(z)\ast0_g$, and 
    \begin{align}\label{Eq:AnchorRUTH}
        \rho_e(\Delta^\kk_gz) & =\Ad_g(\rho_e(z))+\alpha_g(\partial z), \\ \label{Eq:AnchorRUTH'}
        \alpha_g(\Delta^\hh_hy) & =\alpha_{gh}(y)-\Ad_g(\alpha_h(y))-\rho_e(\Omega_{(g,h)}(y))
    \end{align}
    for all $g,h\in G$, $y\in\hh$ and $z\in\kk$. In particular, when endowed respectively with the representation of Lemma~\ref{lemma:Dela1Indep} and the adjoint representation, $\rho_e$ restricts to a $G$-equivariant map $\ker(\partial)\longrightarrow\gg$.
    \end{prop}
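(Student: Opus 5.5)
The plan is to write the anchor in the trivialization $A\cong G\times(\kk\oplus\hh)$, $\rho(g;z,y)=(g;\rho_e(z)+\alpha_g(y))$ with $TG\cong G\times\gg$ right-trivialized, and to translate the condition $\rho(a_0\Join a_1)=\rho(a_0)\ast\rho(a_1)$ term by term using the multiplication~\eqref{Eq:StrMult}. In the right trivialization, multiplication of tangent vectors reads $(g;\xi)\ast(h;\eta)=(gh;\xi+\Ad_g\eta)$. This holds because $\xi^G_g\ast\eta^G_h=d_gR_h(d_eR_g\xi)+d_hL_g(d_eR_h\eta)$, and $d_hL_g\,d_eR_h\eta=d_eR_{gh}\Ad_g\eta$.

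For a composable pair $((g;z_0,\partial z_1+\Delta^\hh_hy),(h;z_1,y))$ the homomorphism condition then becomes
\begin{align*}
\rho_e(z_0)+\rho_e(\Delta^\kk_gz_1)-\rho_e(\Omega_{(g,h)}(y))+\alpha_{gh}(y)
&=\rho_e(z_0)+\alpha_g(\partial z_1)+\alpha_g(\Delta^\hh_hy)\\
&\quad+\Ad_g\rho_e(z_1)+\Ad_g\alpha_h(y),
\end{align*}
for all $g,h,z_0,z_1,y$. The $\rho_e(z_0)$ terms cancel. Both sides are linear in $(z_1,y)$, so the identity splits into its $z_1$-part and its $y$-part. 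The $z_1$-part (set $y=0$) is exactly~\eqref{Eq:AnchorRUTH}, and the $y$-part (set $z_1=0$) is exactly~\eqref{Eq:AnchorRUTH'}. Conversely, adding the two equations recovers the displayed identity, which gives the "if" direction.

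The conditions $\alpha_e\equiv0$ and $\rho_g(z\Join 0_g)=\rho_e(z)\ast0_g$ are necessary by Lemma~\ref{lemma:uAnchord} (via the anchored unit, since $\sigma_e=u$) and by~\eqref{Eq:RtoR}. More importantly, they are what justifies writing the anchor in the form $\rho_e(z)+\alpha_g(y)$ in the first place, since $(g;z,y)$ corresponds to $z\Join0_g+\sigma_g(y)$. For the converse I will therefore assume them, which legitimizes that formula, and then run the computation above backwards. I would also check consistency with the unit: setting $h=e$ in~\eqref{Eq:AnchorRUTH'} and using $\Omega_{(g,e)}=0$ and $\Delta^\hh_e=\mathrm{Id}$ gives $\Ad_g\alpha_e=0$, which is consistent.

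For the final claim, take $z\in\ker(\partial)$ in~\eqref{Eq:AnchorRUTH}. The $\alpha_g(\partial z)$ term vanishes, and $\Delta^\kk_gz=\Delta^1_gz$, so $\rho_e\circ\Delta^1_g=\Ad_g\circ\rho_e$ on $\ker(\partial)$.

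The main obstacle I expect is bookkeeping: getting the sign convention for $\Omega$ in~\eqref{Eq:StrMult} right, and verifying the $\Ad_g$ formula for $\ast$ in the right trivialization. Everything else is linear separation of variables.
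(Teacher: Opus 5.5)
Your proposal is correct and is essentially the paper's argument. The paper proves the converse exactly as you do, by pushing \eqref{Eq:StrMult} through the right-trivialized anchor into $G\ltimes_{\Ad}\gg$; for the forward direction it applies $\rho$ directly to the defining products $\sigma_g(\partial z)\Join z\Join 0_{g^{-1}}$ and $\sigma_{gh}(y)-\sigma_g(\Delta^\hh_hy)\Join\sigma_h(y)$, which is the same computation your separation into $z_1$- and $y$-parts carries out in one stroke. One small point: here $\alpha_e\equiv0$ follows directly from a groupoid homomorphism preserving units (or from $h=e$ in \eqref{Eq:AnchorRUTH'}, as you note), so you need not route it through Lemma~\ref{lemma:uAnchord}, which is stated for LA-groups.
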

\begin{proof}
    If the anchor is a Lie groupoid homomorphism, given $g\in G$ and $z\in\kk$,
    \begin{align*}
        \rho_e(\Delta^\kk_gz) & =\rho_e(\sigma_g(\partial z)\Join z\Join 0_{g^{-1}})=\rho_g(\sigma_g(\partial z))\ast\rho_{g^{-1}}(z\Join 0_{g^{-1}}) \\
                    & =\alpha_g(\partial z)+ 0_g\ast\rho_e(z)\ast 0_{g^{-1}}=\alpha_g(\partial z)+ \Ad_g(\rho_e(z)).
    \end{align*}
    On the other hand, given $g,h\in G$ and $y\in\kk$,
    \begin{align*}
        \rho_e(\Omega_{(g,h)}(z)) & =\rho_{gh}\big{(}\sigma_{gh}(y)-\sigma_g(\Delta^\hh_hy)\Join\sigma_h(y)\big{)}\ast 0_{(gh)^{-1}} \\
                    & =\alpha_{gh}(y)-\rho_g(\sigma_g(\Delta^\hh_hy))\ast(\alpha_h(y)\ast 0_{g^{-1}}) \\
                    & =\alpha_{gh}(y)-\alpha_g(\Delta^\hh_hy)-\Ad_g(\alpha_h(y)).
        \end{align*}
     Conversely, assuming Eq.'s~\eqref{Eq:AnchorRUTH} and~\eqref{Eq:AnchorRUTH'}, the image of Eq.~\eqref{Eq:StrMult} under the anchor,
     \begin{align*}
         \big(g;\rho_{e}(z_0+\Delta^\kk_gz_1-\Omega_{(g,h)}y)+\alpha_{gh}(y)\big), 
     \end{align*}
      is computed to be
     \begin{align*}
     \big(gh;\rho_e(z_0)+\alpha_g(\partial z_1)+\Ad_g(\rho_e(z_1))+\Ad_g(\alpha_h(y))+\alpha_g(\Delta^\hh_hy)\big);
     \end{align*}
     which, in order, equals $\rho(g;z_0,\Delta_h^\hh y+\partial z_1)\cdot\rho(h;z_1,y)$ in $G_{\Ad}\ltimes\gg\cong TG$. 
\end{proof}


\subsection{Compatibility with the brackets}\label{ssec-brackets}
In this subsection, we study the consequences of the structural maps being Lie algebroid morphisms. In the sequel, we write $[\cdot,\cdot]^1$ for the bracket on $\Gamma(A)$, and $[\cdot,\cdot]^0$ for the bracket on $\hh$. We proceed now to study the compatibility of each of the structural maps with the brackets. 


\subsubsection{The unit}\label{sss-unit}

The unit map $u:\hh\longrightarrow A$ is a bundle map that covers the identity element map $\bar{e}:\ast\longrightarrow G$. 
Since $u$ is a Lie algebroid morphism, $u^!:\hh\longrightarrow\Gamma(\bar{e}^!A)$ is a Lie algebra homomorphism. By definition, the pull-back algebroid $\bar{e}^!A$ is simply the isotropy Lie algebra $\gg_e(A)$; hence, the following result follows. 
\begin{lemma}\label{lemma:uBracket}
    If $A\rightrightarrows\hh$ is an LA-group, $u(\hh)$ is a Lie sub-algebra of $\gg_e(A)$.
\end{lemma}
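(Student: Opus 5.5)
The plan is to unwind the definition of a Lie algebroid morphism for $u$ covering the map $\bar e:\ast\to G$, identify the pull-back algebroid $\bar e^!A$ with $\gg_e(A)$, and then read off the claim from the fact that $u^!$ is a Lie algebra homomorphism that is injective.

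First, I will describe $\bar e^!A$ concretely. By definition, $\bar e^!A = T\ast\times_{\bar e^*TG}\bar e^*A$. The pairs $(v,a)$ in it satisfy $d\bar e(v)=\rho_e(a)$. Since $T\ast=0$, this reduces to $\lbrace a\in A_e\mid \rho_e(a)=0\rbrace=\ker(\rho_e)=\gg_e(A)$. The transversality needed for the pull-back to be defined holds because $u$ is anchored, so $\Img(\rho_e)$ already meets the image of $d\bar e=0$ appropriately. Alternatively, one can simply use the fibre-product description directly, which requires no transversality over a point.

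Second, I will check that the bracket on $\Gamma(\bar e^!A)$ is the isotropy bracket. The general formula for the pull-back bracket takes sections of the form $\sum f_i\otimes a_i$ with $f_i\in C(M')$ and $a_i\in\Gamma(A)$, brackets the $a_i$ in $\Gamma(A)$, and adds terms involving the $M'$-vector-field components. When $M'=\ast$, those vector-field terms vanish. The resulting bracket of $a,b\in\gg_e(A)$ is $[\tilde a,\tilde b]^1(e)$ for any extensions $\tilde a,\tilde b\in\Gamma(A)$. This is the standard, extension-independent Lie bracket on the isotropy $\ker\rho_e$.

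Third, since $\rho_e(u(\hh))=0$ by the discussion preceding Lemma~\ref{lemma:uAnchord}, $u$ indeed lands in $\gg_e(A)$. The map $u^!(y)=(0,u(y))$ is then just $u$, and the morphism hypothesis says $u[y,y']^0=[u(y),u(y')]_{\gg_e(A)}$. Hence $u(\hh)$ is closed under the bracket, and because $u$ is injective it is a Lie subalgebra isomorphic to $\hh$.

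The only genuine point to verify is the second step, the identification of the pull-back bracket over a point with the isotropy bracket. I would argue it using the Leibniz rule: if an extension vanishes at $e$, it can be written as $\sum f_i c_i$ with $f_i(e)=0$. Its bracket with any section, evaluated at $e$, then gives $\sum f_i(e)[\,\cdot\,,c_i]-\rho(\cdot)(f_i)(e)\,c_i$. The first term vanishes because $f_i(e)=0$, and the second vanishes because the anchor is zero at $e$ on $\gg_e(A)$. This shows the bracket is well defined and independent of the chosen extensions.
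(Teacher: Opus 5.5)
Your proposal is correct and follows the paper's own argument. Since $u$ is a Lie algebroid morphism covering $\bar e:\ast\to G$, the map $u^!$ is a Lie algebra homomorphism into $\Gamma(\bar e^!A)=\gg_e(A)$; you additionally verify the fibre-product identification and the extension-independence of the isotropy bracket, which the paper leaves implicit. One slip: your first justification of transversality is wrong, because transversality of $d\bar e=0$ to the anchor would require $\rho_e$ to be surjective (in general $\rho_e(A_e)=\ll\neq\gg$), but your alternative remark, that the fibre product over a point needs no transversality, is the correct one and suffices.
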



\subsubsection{The source}\label{sss-source}

The source map $s:A\longrightarrow\hh$ is a bundle map that covers the constant map $\bar\ast:G\longrightarrow\ast$. 
Since $s$ is a Lie algebroid morphism, $s^!:\Gamma(A)\longrightarrow\Gamma(\bar\ast^!\hh)$ is a Lie algebra homomorphism. By definition, if $\bar\ast^!\hh:=TG\times\hh$ is the pull-back algebroid, its anchor is given by the projection onto $TG$ and its Lie bracket is given as follows. Since $\bar\ast^!\hh\cong G\times(\gg\oplus\hh)$, the bracket on $\Gamma(\bar\ast^!\hh)=\XX(G)\oplus C(G,\hh)\cong C(G,\gg\oplus\hh)$ is determined by the bracket of constant sections; in order, this is determined by the bracket of the direct sum Lie algebra $\gg\oplus\hh$.

At the level of sections, a unit extending splitting $\sigma$ induces an isomorphism $\Gamma(A)\cong C(G,\kk\oplus\hh)$; in particular, $[\cdot,\cdot]^1$ is also generated by the brackets of \textbf{constant sections}. Given $y\in\hh$, there is a section $y^\sigma\in\Gamma(A)$ that we call the \textbf{constant section associated with} $y$ and is defined by $y^\sigma_g:=\sigma_g(y)$ for $g\in G$. On the other hand, given $z\in\kk$, there is a \textbf{right-invariant section} $z^R\in\Gamma^R(\ker(s))\leq\Gamma(A)$ defined by
\begin{eqnarray}\label{Eq:RInv}
  z^R_g:=z\Join 0_g\in\ker(s_g),
\end{eqnarray}
for $g\in G$ that we refer to also as the \textbf{constant section associated with} $z$. 
With this terminology, Eq.~\eqref{Eq:RtoR} can be phrased as saying that the anchor map sends constant core sections to right-invariant vector fields, the \emph{constant} vector fields on $G$. 

\begin{lemma}\label{lemma:Source}
    Let $A\rightrightarrows\hh$ be a VB-group over $G$ with core $\kk$ and let $\sigma$ be a splitting of its core sequence~\eqref{Eq:CoreSeq}. Suppose further that $A$ and $\hh$ are Lie algebroids. Then, the source map $s:A\to\hh$ is a Lie algebroid morphism if and only if for all $y,y_0,y_1\in\hh$ and $z,z_0,z_1\in\kk$, $$s_g\big([y_0^\sigma,y_1^\sigma]^1_g\big)=[y_0,y_1]^0,\quad\forall g\in G,$$ 
    and $[y^\sigma,z^R]^1,[z_0^R,z_1^R]^1\in\Gamma(\ker(s))\leq\Gamma(A)$.
\end{lemma}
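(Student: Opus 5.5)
The plan is to use the description of Lie algebroid morphisms through the map $s^!:\Gamma(A)\to\Gamma(\bar\ast^!\hh)\cong C(G,\gg\oplus\hh)$, $a\mapsto(\rho(a),s_*(a))$. Its $TG$-component is the anchor, so $s^!$ respects brackets in that component automatically, since the anchor of any Lie algebroid is bracket preserving. Anchoredness of $s$ is also automatic, because the anchor of $\bar\ast^!\hh$ is the projection to $TG$. Hence $s$ is a Lie algebroid morphism if and only if the $\hh$-component
\[
s_*[a,b]^1=\rho(a)\big(s_*b\big)-\rho(b)\big(s_*a\big)+[s_*a,s_*b]^0
\]
holds for all $a,b\in\Gamma(A)$. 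This is the bracket on $C(G,\hh)\oplus\XX(G)$ coming from the pull-back algebroid of the Lie algebra $\hh$ along the constant map.

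The first step is to reduce to constant sections. Through $\sigma$, $\Gamma(A)\cong C(G,\kk\oplus\hh)$, and $\Gamma(A)$ is generated as a $C(G)$-module by the sections $y^\sigma$ and $z^R$. Both sides of the displayed identity are $\mathbb R$-bilinear and skew. Under $a\mapsto fa$, both sides satisfy the same Leibniz rule. On the left,
\[
s_*[fa,b]^1=f\,s_*[a,b]^1-\rho(b)(f)\,s_*a,
\]
and the right-hand side transforms identically, because $s_*$ is $C(G)$-linear and the map $a\mapsto\rho(a)$ is $C(G)$-linear. So the identity holds for all sections if and only if it holds on pairs of generators.

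The second step evaluates the identity on each type of pair, using that $s_*y^\sigma$ is the constant function $y$ and $s_*z^R=0$. Since derivatives of constant functions vanish, the right-hand side reduces to the bracket of the values:
\begin{itemize}
\item for $(y_0^\sigma,y_1^\sigma)$ the condition is $s_g([y_0^\sigma,y_1^\sigma]^1_g)=[y_0,y_1]^0$ for all $g\in G$;
\item for $(y^\sigma,z^R)$ the condition is $s_*[y^\sigma,z^R]^1=0$, that is, $[y^\sigma,z^R]^1\in\Gamma(\ker s)$;
\item for $(z_0^R,z_1^R)$ the condition is $s_*[z_0^R,z_1^R]^1=0$, that is, $[z_0^R,z_1^R]^1\in\Gamma(\ker s)$.
\end{itemize}
These are exactly the conditions in the statement, which gives both directions at once.

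The main care point is the generation and Leibniz argument. One must check that the right-hand side really satisfies the same Leibniz rule as the left, with the anchor term $\rho(b)(f)$ on both sides, so that agreement on generators forces agreement everywhere. A further subtlety is that $\rho(a)(s_*b)$ uses the $TG$-component acting on $\hh$-valued functions. This is the only place where the pull-back bracket differs from a pointwise bracket, and it vanishes on constant sections.
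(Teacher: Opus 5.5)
Your proof is correct and follows the same route as the paper's: express the morphism condition through $s^!$, reduce to the constant sections $y^\sigma$ and $z^R$, and use that $s_*y^\sigma$ is the constant function $y$ while $s_*z^R=0$. The only difference is that you write out the Leibniz-rule check, showing the defect is $C(G)$-bilinear, which justifies restricting to generators; the paper compresses this into the phrase ``$\Gamma(A)$ is generated by constant sections.''
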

\begin{proof}
    Since $\Gamma(A)$ is generated by constant sections, $s$ is a Lie algebroid morphism if and only if the defining equation $s^![a_0,a_1]^1=[s^!a_0,s^!a_1]_{\bar{\ast}^!\hh}$ holds at constant sections. 
    By definition, for all $y\in\hh$ and all $g\in G$, $s_g(y^\sigma_g)=y$; hence, $s_*(y^\sigma)\in C(G,\hh)$ is the constant function $y$.
    Similarly, for all $z\in\kk$ and all $g\in G$, $s_g(z^R_g)=s_g(z\Join0_g)=0$; hence, $s_*(z^R)\in C(G,\hh)$ is the constant function $0$. 
    Computing thus, yields
    \begin{align*}
    s_*\big([y_0^\sigma,y_1^\sigma]^1\big) & =[y_0,y_1]^0, \quad s_*\big([y^\sigma,z^R]^1\big)  =0, \quad  s_*\big([z_0^R,z_1^R]^1\big)  =0.
    \end{align*}
    as claimed.
\end{proof}

Although Lemma~\ref{lemma:Source} ensures that $[y^\sigma,z^R]^1\in\Gamma(\ker(s))$ for all $y\in\hh$ and $z\in\kk$, in general, there is no reason for it to be right-invariant. 
Instead, this section defines a family 
\begin{eqnarray}\label{Eq:ell}
    \ell:G\longrightarrow\hh^*\otimes\kk^*\otimes\kk:g\longmapsto\ell_g,
\end{eqnarray}
of representation-like tensors parametrized by $G$, where $\ell_g:\hh\longrightarrow\kk^*\otimes\kk$ is given by 
\begin{eqnarray*}
    \ell_g^y(z):=[y^\sigma,z^R]^1_g\Join0_{g^{-1}},
\end{eqnarray*}
whenever $y\in\hh$ and $z\in\kk$. Similarly, although Lemma~\ref{lemma:Source} ensures that $s_g([y^\sigma_0,y^\sigma_1])=[y_0,y_1]^0$ for all $y_0,y_1\in\hh$ and $g\in G$, there is no reason for it to be the constant section associated with $[y_0,y_1]^0$. 
Instead, this section helps defines a family of 2-cochains 
\begin{eqnarray}\label{Eq:omega}
    \omega:G\longrightarrow\wedge^2\hh^*\otimes\kk,\quad\omega_g(y_0,y_1):=\big{(}[y^\sigma_0,y^\sigma_1]^1_g-\sigma_g([y_0,y_1]^0)\big{)}\Join0_{g^{-1}} ,    
\end{eqnarray}
parametrized by $G$. 
\begin{remark}\label{Rmk:omega=0}
    Note that, if one regards $G\times\hh$ as a bundle of Lie algebras, $\omega\equiv 0$ can be interpreted as saying that $\sigma$ induces a Lie algebra homomorphism at the level of sections; however, unless $\rho\rest{\Img(\sigma)}\equiv0$, $\sigma$ cannot possibly be anchored and hence, $\omega\equiv0$ does not imply that $\sigma$ is a Lie algebroid morphism. 
\end{remark}
\noindent Assuming only that $s:A\to\hh$ is a Lie algebroid morphism, does not imply that the bracket of right-invariant sections is right-invariant either. However, since in an LA-group the multiplication is also a Lie algebroid morphism, as we recall in Lemma~\ref{lemma:RInvBra} below, the space of right-invariant sections $\Gamma^R(\ker(s))$ is indeed a Lie subalgebra of $\Gamma(A)$ (see, e.g. \cite{2ndOrdGeom}). This turns $\kk$ into a Lie algebra and, ultimately, allows one to express the bracket of constant functions in $C(G,\kk\oplus\hh)$ using the families of tensors~\eqref{Eq:ell} and~\eqref{Eq:omega},
\begin{eqnarray}\label{Eq:Bra}
[(z_0,y_0),(z_1,y_1)]=([z_0,z_1]+\ell^{y_0}(z_1)-\ell^{y_1}(z_0)+\omega(y_0,y_1),[y_0,y_1]^0).    
\end{eqnarray}
\begin{remark}\label{Rmk:NotCocycles}
    Although the bracket of constant functions in $C(G,\kk\oplus\hh)$ assumes the \emph{familiar} shape~\eqref{Eq:Bra}, as in general it is \emph{not} true that the fibre $A_g$ of a Lie algebroid $A$ over $g\in G$ is a Lie algebra, $\omega_g$ has no reason to be a Lie algebra cocycle. In fact, in general, $\ell_g$ does not define a representation either.
\end{remark}
Eq.~\eqref{Eq:Bra} furnishes an explicit expression for the bracket and this, in order, allows one to record some properties that complement Section~\ref{ssec-anchors} and~\ref{sss-unit}. Firstly, since the anchor map of a Lie algebroid always induces a Lie algebra homomorphism at the level of sections, this yields relations between $\rho_e$, $\alpha$, $\ell$ and $\omega$. 

\begin{prop}\label{Prop:anchorBrackets}
    Let $A\rightrightarrows\hh$ be an LA-group over $G$ with core $\kk$ and let $\sigma$ be a splitting of its core sequence~\eqref{Eq:CoreSeq}. 
    Then, the restriction of the anchor map $\rho:A\longrightarrow TG$ to the identity $\rho_e:\kk\longrightarrow\gg$ yields a Lie algebra homomorphism, and  
    \begin{align}\label{Eq:AnchorRels}
        \rho_e(\ell_g^yz) & =[\alpha_g(y),\rho_e(z)]-(d_g\alpha)_{\rho_g(z^R_g)}(y), \\
        [\alpha_g(y_0),\alpha_g(y_1)]-\alpha_g\big([y_0,y_1]^0\big) & =\rho_e\big(\omega_g(y_0,y_1)\big)+(d_g\alpha)_{\rho_g(y_1^\sigma)}(y_0)-(d_g\alpha)_{\rho_g(y_0^\sigma)}(y_1) \nonumber
    \end{align}
    for all $g\in G$, $y,y_0,y_1\in\hh$ and $z\in\kk$.
\end{prop}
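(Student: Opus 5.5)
The whole argument rests on one fact: the anchor $\rho:\Gamma(A)\to\XX(G)$ is a Lie algebra homomorphism, $\rho[a_0,a_1]^1=[\rho a_0,\rho a_1]$. I will evaluate this identity on the three kinds of pairs of constant sections, $(z_0^R,z_1^R)$, $(y^\sigma,z^R)$ and $(y_0^\sigma,y_1^\sigma)$. Everything is then read off in the right trivialization $TG\cong G\times\gg$. In that trivialization a section of $A$ is a function $G\to\kk\oplus\hh$, and the anchor sends the constant function $(z,y)$ to the function $g\mapsto\rho_e(z)+\alpha_g(y)$.

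The one structural input is the bracket of vector fields written as $\gg$-valued functions in the right trivialization. If $X,Y:G\to\gg$ correspond to the vector fields $\tilde X_g=X_g\ast0_g$ and similarly for $Y$, then
\[
[\tilde X,\tilde Y]\leftrightarrow (dY)(\tilde X)-(dX)(\tilde Y)+\epsilon[X,Y]_\gg ,
\]
where $\epsilon=\pm1$ depends on the convention for the bracket of right-invariant fields, which is anti-isomorphic to $\gg$. I will fix $\epsilon$ so that $[z_0^R,z_1^R]^1=[z_0,z_1]^R$ matches the sign in \eqref{Eq:AnchorRels}. By \eqref{Eq:RtoR}, $\rho(z^R)$ is the constant function $\rho_e(z)$, so the derivative terms vanish on such sections.

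The three evaluations then go as follows.

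\textbf{Pair $(z_0^R,z_1^R)$.} By Lemma~\ref{lemma:RInvBra} the bracket is $[z_0,z_1]^R$. Applying $\rho$ gives the constant $\rho_e([z_0,z_1])$. Since both anchor images are constant functions, their bracket is the constant $[\rho_e z_0,\rho_e z_1]$. Hence $\rho_e$ is a Lie algebra homomorphism.

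\textbf{Pair $(y^\sigma,z^R)$.} By definition of $\ell$, $[y^\sigma,z^R]^1_g=\ell^y_g(z)\Join0_g$. Applying the anchor and \eqref{Eq:RtoR} gives $\rho_e(\ell^y_gz)$. On the other side, $\rho(y^\sigma)$ is the function $\alpha(y)$ and $\rho(z^R)$ is the constant $\rho_e(z)$. The bracket formula above therefore yields $[\alpha_g(y),\rho_e(z)]-(d_g\alpha)_{\rho_g(z^R_g)}(y)$, which is the first relation.

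\textbf{Pair $(y_0^\sigma,y_1^\sigma)$.} From the definition of $\omega$,
\[
[y_0^\sigma,y_1^\sigma]^1_g=\sigma_g([y_0,y_1]^0)+\omega_g(y_0,y_1)\Join0_g .
\]
Applying the anchor gives $\alpha_g([y_0,y_1]^0)+\rho_e(\omega_g(y_0,y_1))$. The bracket of $\alpha(y_0)$ and $\alpha(y_1)$ as vector fields is
\[
(d_g\alpha)_{\rho_g(y_0^\sigma)}(y_1)-(d_g\alpha)_{\rho_g(y_1^\sigma)}(y_0)+[\alpha_g y_0,\alpha_g y_1].
\]
Equating the two expressions and rearranging gives the second relation.

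The only delicate step is the sign bookkeeping. This covers the convention $\epsilon$ relating right-invariant vector fields to $\gg$, which must be consistent with the way the paper identifies $\rho_e(z)^G$ with $\rho_e(z)$. It also covers the order of the two derivative terms, which must come out as in the stated relations. I will first pin $\epsilon$ down using the first evaluation, and then check that the same choice reproduces the signs of the other two relations exactly as stated.
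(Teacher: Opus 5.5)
Your proposal is correct and is essentially the paper's own proof. The paper also gets the homomorphism property of $\rho_e$ from \eqref{Eq:RtoR}, and obtains the two relations by evaluating, pointwise in the right trivialization, the bracket-preservation of the anchor on the constant sections $(0,y),(z,0)$ and $(0,y_0),(0,y_1)$, taking into account that $\alpha$ is not constant. Your sign bookkeeping checks out with $\epsilon=+1$, meaning the bracket on $\gg$ is the one induced by right-invariant vector fields; the paper uses this convention implicitly, and with the left-invariant convention $\rho_e$ would be an anti-homomorphism and the bracket terms would change sign.
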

\begin{proof}
    That $\rho_e$ is a Lie algebra homomorphism follows from~\eqref{Eq:RtoR}. Keeping in mind that $\alpha$ is not constant, the relations in~\eqref{Eq:AnchorRels} follow from computing point-wise the bracket relations of the constant functions $(0,y),(z,0)\in C(G,\kk\oplus\hh)$ and that of the constant functions $(0,y_0),(0,y_1)\in C(G,\kk\oplus\hh)$, respectively.
\end{proof}

\begin{cor}\label{Cor:lsubalg}
The image of the core under the anchor map at the identity defines a Lie subalgebra $\ll:=\rho_e(\kk)\leq\gg$. Moreover, $\F^\kk:=\rho\big(\Gamma(\ker(s))\big)$ is a right-invariant involutive submodule of $\F^A\leq\XX(G)$.
\end{cor}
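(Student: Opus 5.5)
The first claim follows from Proposition~\ref{Prop:anchorBrackets}: $\rho_e:\kk\to\gg$ is a Lie algebra homomorphism, where $\kk$ carries the bracket of right-invariant sections, $[z_0,z_1]^R=[z_0^R,z_1^R]^1$. This bracket is well defined because $\Gamma^R(\ker(s))$ is a Lie subalgebra of $\Gamma(A)$. The image of a Lie algebra homomorphism is a Lie subalgebra, so $\ll:=\rho_e(\kk)\leq\gg$. For the record, the identity behind this is \eqref{Eq:RtoR}, $\rho(z^R)=\rho_e(z)^G$. Combined with the fact that the anchor intertwines brackets of sections, it gives $\rho_e([z_0,z_1])^G=[\rho_e(z_0)^G,\rho_e(z_1)^G]$, and the right-hand side is computed by the Lie bracket of $\gg$, with the usual sign convention for right-invariant fields.

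For the second claim, use that the constant core sections $z^R$ generate $\Gamma(\ker(s))$ as a $C(G)$-module. Under $\ker(s)\cong G\times\kk$, every section of $\ker(s)$ can be written as $\sum_i f_i z_i^R$, with $f_i\in C(G)$ and $\{z_i\}$ a basis of $\kk$. Hence
\[
\F^\kk=\rho\big(\Gamma(\ker(s))\big)=C(G)\cdot\{x^G\mid x\in\ll\}.
\]
So $\F^\kk$ is the $C(G)$-module generated by the right-invariant vector fields with values in $\ll$, which is the meaning of \emph{right-invariant} here. Equivalently, it consists of the smooth sections of the right-invariant distribution $g\mapsto d_eR_g(\ll)$. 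Since the anchor is $C(G)$-linear, $\F^\kk\leq\rho(\Gamma(A))=\F^A$.

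For involutivity, compute
\[
[fx^G,hx'^G]=fh\,[x^G,x'^G]+f\,x^G(h)\,x'^G-h\,x'^G(f)\,x^G.
\]
The last two terms lie in the module. The first term is, up to sign, the right-invariant field of the bracket in $\gg$ of $x$ and $x'$, and this lies in $\ll$ because $\ll$ is a subalgebra. An alternative is to apply the anchor to $[\Gamma(\ker(s)),\Gamma(\ker(s))]^1\subseteq\Gamma(\ker(s))$. This inclusion follows from the Leibniz rule together with $[z_0^R,z_1^R]^1\in\Gamma^R(\ker(s))$ from Lemma~\ref{lemma:Source}, and it gives involutivity directly.

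The main obstacle is making precise that $\F^\kk$ is exactly the module generated by the right-invariant fields. This relies on the trivialization $\ker(s)\cong G\times\kk$ from Section~\ref{sec:1} and on \eqref{Eq:RtoR}. Once that is in place, the rest is routine.
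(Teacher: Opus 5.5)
Your proof is correct and matches the paper's implicit argument: the corollary is read off from Proposition~\ref{Prop:anchorBrackets} via \eqref{Eq:RtoR}, so $\ll=\rho_e(\kk)$ is closed under the bracket (up to the sign convention for right-invariant fields), and $\F^\kk$ is the $C(G)$-span of the right-invariant fields $x^G$, $x\in\ll$, hence right-invariant and involutive. One small citation slip: Lemma~\ref{lemma:Source} only gives $[z_0^R,z_1^R]^1\in\Gamma(\ker(s))$, and right-invariance is Lemma~\ref{lemma:RInvBra}, but membership in $\Gamma(\ker(s))$ is all your alternative involutivity argument actually uses.
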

Next, one can express the converse to Lemma~\ref{lemma:uAnchord}.
\begin{prop}\label{Prop:uMAP}
    Let $A\rightrightarrows\hh$ be a VB-group over $G$ and let $\sigma$ be a unit extending splitting of its core sequence~\eqref{Eq:CoreSeq}. Suppose further that $A$ and $\hh$ carry Lie algebroid structures for which the source map $s$ is a Lie algebroid morphism. Then, the unit map $u:\hh\to A$ is a Lie algebroid morphism if and only if $\alpha$ and $\omega$ vanish identically at the identity.
\end{prop}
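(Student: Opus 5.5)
We split the claim into its two components: $u$ being anchored, and $u^!$ being a Lie algebra homomorphism $\hh\to\Gamma(\bar e^!A)=\gg_e(A)$. The unit covers $\bar e:\ast\to G$, so $d\bar e=0$. Hence $u$ is anchored if and only if $\rho_e\circ u\equiv0$. The splitting extends the unit, so $\sigma_e=u$, and therefore $\rho_e(u(y))=\rho_e(\sigma_e(y))\ast0_{e}=\alpha_e(y)$. Thus $u$ is anchored exactly when $\alpha_e\equiv0$. This is the converse direction of the observation preceding Lemma~\ref{lemma:uAnchord}.

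Assuming $\alpha_e\equiv0$, the pull-back $\bar e^!A$ is the isotropy Lie algebra $\gg_e(A)=\ker\rho_e$, and $u^!(y)=(0,u(y))$. Its bracket is the restriction of the algebroid bracket $[\cdot,\cdot]^1$ to sections evaluated at $e$. Concretely, for $a_0,a_1\in\gg_e(A)$ extended to sections $\tilde a_0,\tilde a_1$, we have $[a_0,a_1]=[\tilde a_0,\tilde a_1]^1_e$. This is well defined on $\ker\rho_e$ by the Leibniz rule, since derivative terms appear multiplied by $\rho_e(a_i)=0$. I would use the constant sections $y^\sigma$ as extensions, because $y^\sigma_e=\sigma_e(y)=u(y)$. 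The homomorphism condition then reads $[y_0^\sigma,y_1^\sigma]^1_e=u([y_0,y_1]^0)=\sigma_e([y_0,y_1]^0)$ for all $y_0,y_1\in\hh$. By definition~\eqref{Eq:omega}, and since $\Join0_{e}$ is the identity on $A_e$, the difference of the two sides is exactly $\omega_e(y_0,y_1)$. So, given $\alpha_e\equiv0$, $u$ respects brackets iff $\omega_e\equiv0$.

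Combining the two steps gives the equivalence. In the direction "morphism $\Rightarrow$ vanishing", anchoredness gives $\alpha_e=0$ first, so the bracket description of $\gg_e(A)$ applies and yields $\omega_e=0$. The hypothesis that $s$ is a Lie algebroid morphism enters only through the setup of $\omega$: by Lemma~\ref{lemma:Source}, $[y_0^\sigma,y_1^\sigma]^1_g-\sigma_g([y_0,y_1]^0)$ lies in $\ker(s_g)$, so $\omega$ is a well-defined $\kk$-valued tensor.

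The main obstacle is justifying that the bracket on $\Gamma(\bar e^!A)$ is computed by evaluating $[\tilde a_0,\tilde a_1]^1$ at $e$ for \emph{arbitrary} extensions. I would show this via the general description of pull-back algebroids along $\bar e$: sections of $\bar e^!A$ are pairs $(0,\sum f_i a_i)$, and their bracket is $\sum f_if_j[a_i,a_j]^1_e$ with the anchor terms vanishing. Independence of the extension then follows from the Leibniz rule, since the anchors vanish at $e$.
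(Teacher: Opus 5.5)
Your proposal is correct and follows essentially the paper's argument. Anchoredness of $u$ is equivalent to $\alpha_e\equiv0$ because $\sigma_e=u$, and the bracket condition is checked on $\gg_e(A)$ by extending $u(y_i)$ to the constant sections $y_i^\sigma$, which gives $[u(y_0),u(y_1)]_{\gg_e(A)}=\omega_e(y_0,y_1)+u([y_0,y_1]^0)$. You also supply two details the paper leaves implicit: that the bracket on $\gg_e(A)$ does not depend on the choice of extensions, and that the hypothesis on $s$ is what makes $\omega$ well defined.
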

\begin{proof}
    $u$ is a Lie algebroid morphism if and only if its image lies in the isotropy and the map $u:\hh\to\gg_e(A)$ is a Lie algebra homomorphism. 
    Recall that given $a_0,a_1\in\gg_e(A)$, $[a_0,a_1]_{g_e(A)}$ is defined by evaluating $[\bar{a}_0,\bar{a}_1]^1$ at $e$, where $\bar{a}_0,\bar{a}_1\in\Gamma(A)$ are any pair of sections that yield respectively $a_0$ and $a_1$ when evaluated at $e$. In particular, for $y_0,y_1\in\hh$, $[u(y_0),u(y_1)]_
    {\gg_e(A)}$, one might as well use $y_0^\sigma$ and $y_1^\sigma$; therefore, 
    $$[u(y_0),u(y_1)]_{\gg_e(A)}=\omega_e(y_0,y_1)+u([y_0,y_1]^0).$$
    In order, $u$ is a Lie algebra homomorphism if and only if $\omega_e\equiv0$. On the other hand, $\Img(u)\subseteq\gg_e(A)$ if and only if $\alpha_e\equiv0$, thus concluding the proof.
\end{proof}

In sight of~\eqref{Eq:unitSplit} and Lemmas~\ref{lemma:uAnchord} and~\ref{lemma:uBracket}, the following result also ensues.
\begin{cor}\label{Cor:IsoAt e}
    Let $A\rightrightarrows\hh$ be an LA-group with core $\kk$. Then, the exact sequence~\eqref{Eq:IsotrpyAlgExt} happens in the category of Lie algebras. In particular, the isotropy Lie algebra $\gg_e(A)$ is isomorphic to the semi-direct product $\mathfrak{k}\oplus\hh$, where $\mathfrak{k}=\kk\cap\gg_e(A)$.
\end{cor}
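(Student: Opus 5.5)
The idea is to show that $\gg_e(A)$ is a Lie algebra, that the maps in \eqref{Eq:IsotrpyAlgExt} are Lie algebra homomorphisms, and that the unit gives a bracket-preserving section; the semi-direct product then follows.

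First, $\gg_e(A)=\ker(\rho_e)$ is the isotropy Lie algebra at $e$, so it is a Lie algebra. Its bracket is obtained by extending elements to sections and evaluating their bracket at $e$. The subspace $\mathfrak{k}=\kk\cap\gg_e(A)=\ker(\rho_e\rest{\kk})$ is the kernel of the Lie algebra homomorphism $\rho_e:\kk\to\gg$ of Proposition~\ref{Prop:anchorBrackets}. Hence it is an ideal of the Lie algebra $\kk$, where $\kk$ carries the bracket of right-invariant sections from Lemma~\ref{lemma:RInvBra}. We still need its bracket inside $\gg_e(A)$ to agree with this one. I would compute the bracket of $z_0,z_1\in\mathfrak{k}$ in $\gg_e(A)$ using the extensions $z_0^R,z_1^R$. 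Evaluated at $e$, this gives $[z_0^R,z_1^R]^1_e=[z_0,z_1]$, so the inclusion $\mathfrak{k}\hookrightarrow\gg_e(A)$ is a Lie algebra homomorphism.

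Next, consider $s_e:\gg_e(A)\to\hh$. Since $s$ is a Lie algebroid morphism covering $G\to\ast$, Lemma~\ref{lemma:Source} shows that $s_*$ is a bracket homomorphism on sections. Evaluating at $e$, where the anchors vanish on isotropy elements, gives $s_e[a_0,a_1]_{\gg_e(A)}=[s_ea_0,s_ea_1]^0$. The computation is cleanest in the trivialization: write elements of $\gg_e(A)$ as $(z,y)$, extend them by constant functions, and apply \eqref{Eq:Bra} together with $\omega_e=0$. This yields
\begin{equation*}
[(z_0,y_0),(z_1,y_1)]_{\gg_e(A)}=\big([z_0,z_1]+\ell^{y_0}_e(z_1)-\ell^{y_1}_e(z_0),[y_0,y_1]^0\big).
\end{equation*}
Here $\omega_e=0$ holds because $u$ is a Lie algebroid morphism (Proposition~\ref{Prop:uMAP}). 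The formula makes it visible that $\mathfrak{k}$ is an ideal, that $s_e$ is a homomorphism, and that $u$ is a homomorphism (this last point is Lemma~\ref{lemma:uBracket}). So \eqref{Eq:IsotrpyAlgExt} is an exact sequence of Lie algebras, split by $u$.

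Finally, a split extension with a homomorphic section is a semi-direct product. Concretely, $\hh$ acts on $\mathfrak{k}$ by $y\cdot z=[u(y),z]_{\gg_e(A)}=\ell^y_e(z)$, and the map $\mathfrak{k}\rtimes\hh\to\gg_e(A)$, $(z,y)\mapsto z+u(y)$, is a Lie algebra isomorphism.

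The main obstacle is justifying that brackets in $\gg_e(A)$ can be computed with the chosen extensions when they do not lie in $\ker\rho$ away from $e$. This is fine, because the isotropy bracket at a point is independent of the extension: the Leibniz terms carry the anchor at $e$, which vanishes there. A second point to check is that $\ell_e^y$ preserves $\mathfrak{k}$. By \eqref{Eq:AnchorRels} at $g=e$, using $\alpha_e=0$, we get $\rho_e(\ell^y_ez)=-(d_e\alpha)_{\rho_e(z)}(y)$, which vanishes when $z\in\mathfrak{k}$.
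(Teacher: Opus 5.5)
Your proof is correct and follows the same route as the paper. The paper's argument is that $\mathfrak{k}$ is an ideal of $\kk$ because $\rho_e$ is a Lie algebra homomorphism (Proposition~\ref{Prop:anchorBrackets}), that $s_e$ is a homomorphism by Eq.~\eqref{Eq:Bra}, and that the splitting by $u$ is Lemma~\ref{lemma:uBracket}; your extra details (the isotropy bracket does not depend on the chosen extensions, $\omega_e=0$ for a unit extending $\sigma$ via Proposition~\ref{Prop:uMAP}, and the action is identified as $\ell_e$) just make explicit what the paper's three-line proof leaves implicit.
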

\begin{proof}
    It follows from Proposition~\ref{Prop:anchorBrackets} that $\mathfrak{k}$ is an ideal in $\kk$; in particular, it is a subalgebra of $\gg_e(A)$. It is clear from Eq.~\eqref{Eq:Bra} that $s_e$ is a Lie algebra homomorphism, the rest is Lemma~\ref{lemma:uBracket}. 
\end{proof}


\subsubsection{The target}\label{sss-target}

The target map $t:A\longrightarrow\hh$ is a bundle map that covers the constant map $\bar\ast:G\longrightarrow\ast$. 
Since $t$ is a Lie algebroid morphism, $t^!:\Gamma(A)\longrightarrow\Gamma(\bar\ast^!\hh)$ is a Lie algebra homomorphism. By definition, the pull-back algebroid $\bar\ast^!\hh:=TG\times\hh$ is the same as that in the source case, the trivial transitive Lie algebroid with isotropy Lie algebra $\hh$. 
\begin{prop}\label{Prop:Target}
    Let $A\rightrightarrows\hh$ be a VB-group over $G$ with core $\kk$ and let $(\partial,\Delta^\hh,\Delta^\kk,\Omega)$ be the RUTH induced by a unit extending splitting $\sigma$ of~\eqref{Eq:CoreSeq}. Suppose further that $A$ and $\hh$ carry Lie algebroid structures for which the source map $s$ is a Lie algebroid morphism. Then, the target map $t:A\to\hh$ is a Lie algebroid morphism if and only if the structural map $\partial:\kk\longrightarrow\hh$ is a Lie algebra homomorphism, and  
    \begin{align}\label{Eq:TargetRels}
        \partial(\ell_g^yz) & =[\Delta^\hh_gy,\partial z]^0-(d_g\Delta^\hh)_{\rho_e(z)^G_g}(y), \\
        [\Delta^\hh_g(y_0),\Delta^\hh_g(y_1)]^0-\Delta^\hh_g[y_0,y_1]^0 & =\partial\omega_g(y_0,y_1)+(d_g\Delta^\hh)_{\rho_g(y_1^\sigma)}(y_0)-(d_g\Delta^\hh)_{\rho_g(y_0^\sigma)}(y_1) \nonumber
    \end{align}
    for all $g\in G$, $y,y_0,y_1\in\hh$ and $z\in\kk$.
\end{prop}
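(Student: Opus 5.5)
The plan mirrors the proof of Lemma~\ref{lemma:Source}. Under the isomorphism $\Gamma(A)\cong C(G,\kk\oplus\hh)$, brackets in $\Gamma(A)$ and in $\Gamma(\bar\ast^!\hh)\cong C(G,\gg\oplus\hh)$ are determined, via the Leibniz rule, by brackets of constant sections. The map $t^!$ is compatible with multiplication by functions, because the anchors are intertwined: the anchor of $\bar\ast^!\hh$ is the projection to $TG$, and $t^!$ carries the $TG$-component $\rho(a)$. Hence $t$ is a Lie algebroid morphism if and only if
$$t_*[a_0,a_1]^1=\rho(a_0)\cdot t_*(a_1)-\rho(a_1)\cdot t_*(a_0)+[t_*a_0,t_*a_1]^0$$
holds pointwise when $a_0,a_1$ range over constant sections $y^\sigma$ and $z^R$. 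Here the right-hand side is the $\hh$-component of the bracket in $\bar\ast^!\hh$, in which vector fields act by differentiation on $C(G,\hh)$.

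First compute $t_*$ on constant sections using~\eqref{Eq:StrMaps}:
$$t_*(z^R)_g=t(g;z,0)=\partial z\ (\text{constant}),\qquad t_*(y^\sigma)_g=\Delta^\hh_gy.$$
Then treat the three types of pairs.

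\begin{itemize}
\item For $(z_0^R,z_1^R)$: both $t_*$'s are constant, so the derivative terms vanish. By Lemma~\ref{lemma:RInvBra} the bracket is right-invariant, namely $[z_0,z_1]^R$, so the condition reads $\partial[z_0,z_1]=[\partial z_0,\partial z_1]^0$. That is, $\partial$ is a Lie algebra homomorphism.
\item For $(y^\sigma,z^R)$: the bracket is $(\ell^y_gz)\Join0_g$, with $t$-image $\partial(\ell^y_gz)$. The right-hand side is
$$\rho_g(y^\sigma_g)\cdot\partial z-\rho_g(z^R_g)\cdot\Delta^\hh y+[\Delta^\hh_gy,\partial z]^0.$$
The first term vanishes since $\partial z$ is constant. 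By~\eqref{Eq:RtoR} the second is $-(d_g\Delta^\hh)_{\rho_e(z)^G_g}(y)$. This gives the first relation in~\eqref{Eq:TargetRels}.
\item For $(y_0^\sigma,y_1^\sigma)$: by~\eqref{Eq:omega} the bracket equals $\omega_g(y_0,y_1)\Join0_g+\sigma_g([y_0,y_1]^0)$. Its $t$-image is $\partial\omega_g(y_0,y_1)+\Delta^\hh_g[y_0,y_1]^0$, since $t$ is fibrewise linear and $t(z\Join0_g)=\partial z$. Equating with
$$(d_g\Delta^\hh)_{\rho_g(y_0^\sigma)}(y_1)-(d_g\Delta^\hh)_{\rho_g(y_1^\sigma)}(y_0)+[\Delta^\hh_gy_0,\Delta^\hh_gy_1]^0$$
yields the second relation.
\end{itemize}

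For the converse, the three identities give the morphism condition on generators. The Leibniz rule on both sides, together with the anchoredness of $t$, extends it to all sections.

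The main obstacle is the reduction to constant sections. One must check that the defect $t^![fa_0,a_1]-[t^!(fa_0),t^!a_1]$ is $C(G)$-linear in each entry, which holds exactly because $t$ is anchored; the paper calls this inconsequential. A second subtlety is the first bullet: it uses right-invariance of $[z_0^R,z_1^R]^1$, which comes from multiplicativity and not from the hypotheses of the statement. If that is not available, I would instead write $[z_0^R,z_1^R]^1_g=c_g\Join0_g$. Evaluating at $g=e$ gives the homomorphism property of $\partial$. For general $g$, right-translating by the multiplication $\Join 0_{g}$ identifies the condition with the one at $e$, since $t(c\Join0_g)=t_e(c)$.
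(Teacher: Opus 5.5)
Your main argument is the paper's proof. You reduce to constant sections, then read off that $\partial$ is a homomorphism from $(z_0^R,z_1^R)$ and the two relations in~\eqref{Eq:TargetRels} from $(y^\sigma,z^R)$ and $(y_0^\sigma,y_1^\sigma)$, using~\eqref{Eq:RtoR}, Lemma~\ref{lemma:RInvBra} and~\eqref{Eq:omega}. The reduction works because the defect of $t^!$ is $C(G)$-bilinear, as $t^!$ intertwines the anchors by construction. Your computations check out, signs included.

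The fallback in your last paragraph, however, does not work. Write $[z_0^R,z_1^R]^1_g=c_g\Join 0_g$ with $c:G\to\kk$ not constant. The condition imposed at $g$ is $\partial c_g=[\partial z_0,\partial z_1]^0$. The identity $t(c\Join 0_g)=t_e(c)$ only says the left-hand side is $\partial c_g$; right translation does nothing to relate $c_g$ to $c_e$. So the condition at $e$ does not propagate, and you pick up the extra requirement that $\partial c_g$ be independent of $g$.

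Here is a concrete counterexample. Take:
\begin{itemize}
\item $\hh$ non-abelian, $\kk=\hh$ and $\partial=\mathrm{id}$;
\item the trivial RUTH, $\Delta^\hh=\Delta^\kk=\mathrm{id}$ and $\Omega=0$;
\item zero anchor, with fibres $A_g=\hh\ltimes_{\ad}(\hh,\lambda(g)[\cdot,\cdot]^0)$, where $\lambda$ is smooth, nonconstant and $\lambda(e)=1$.
\end{itemize}
Then $s$ is a morphism, $\ell_g^y=\ad_y$ and $\omega=0$. Both relations in~\eqref{Eq:TargetRels} hold, and $\partial$ is a homomorphism for the core bracket read at $e$. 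Yet $t_g[(z_0,y_0),(z_1,y_1)]-[t_g(z_0,y_0),t_g(z_1,y_1)]^0=(\lambda(g)-1)[z_0,z_1]^0\neq0$ wherever $\lambda(g)\neq1$. Moreover, without right-invariance $\kk$ has no canonical Lie bracket, so ``$\partial$ is a homomorphism'' is not even well posed.

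The statement must therefore be read inside an LA-group, as the paper tacitly does. Two ingredients come from $m$ being a Lie algebroid morphism:
\begin{itemize}
\item right-invariance of $[z_0^R,z_1^R]^1$;
\item multiplicativity of the anchor, on which~\eqref{Eq:RtoR} rests, and hence the term $\rho_e(z)^G_g$ in~\eqref{Eq:TargetRels}.
\end{itemize}
Under that reading your main line is as valid as the paper's; drop the fallback.
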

\begin{proof}
    It follows from~\eqref{Eq:RtoR} that $t_*[z_0^R,z_1^R]=\partial[z_0,z_1]$. Computing the point-wise bracket implies that $\partial=t_e$ restricts to a Lie algebra homomorphism. Keeping in mind that $\Delta^\hh$ is not constant, the relations in~\eqref{Eq:TargetRels} follow from computing point-wise the bracket relations of the constant functions $(0,y),(z,0)\in C(G,\kk\oplus\hh)$ and that of the constant functions $(0,y_0),(0,y_1)\in C(G,\kk\oplus\hh)$, respectively.
\end{proof}
\begin{remark}\label{Rmk:CoreDiagram}
    The seeming similarity between Propositions~\ref{Prop:anchorBrackets} and~\ref{Prop:Target} is not an accident but a feature. Indeed, in a double Lie group, both side groups are subject to the same type of relations. See also \cite{CoreDiagrams}.
\end{remark}
The following result is analogous to Corollaries~\ref{Cor:lsubalg} and~\ref{Cor:IsoAt e}.
\begin{cor}\label{Cor:GpdOrbitsAndIsos}
    Let $A\rightrightarrows\hh$ be an LA-group over $G$. Then, the orbit through $0\in\hh$ defines a Lie subalgebra $O^A_0\leq\hh$. Furthermore, the isotropy Lie group $\G_0(A)$ also carries the structure of a trivial bundle of Lie algebras over $G$. 
\end{cor}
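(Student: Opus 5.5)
By analogy with Corollaries~\ref{Cor:lsubalg} and~\ref{Cor:IsoAt e}, the proof will play the role of $\rho$ with $t$ and the role of the identity element $e\in G$ with the zero $0\in\hh$.

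\emph{The orbit.} Proposition~\ref{Prop:GpdOrbits} identifies
\[
O^A_0=\lbrace y'\in\hh\mid y'+\Img(\partial)\in O^{\Delta^0}_{0+\Img(\partial)}\rbrace .
\]
Since $\Delta^0$ is a linear action, the orbit of $0+\Img(\partial)$ is the single point $0$. Hence $O^A_0=\Img(\partial)$. Directly: $t(g;z,0)=\partial z+\Delta^\hh_g0=\partial z$. By Proposition~\ref{Prop:Target}, $\partial:\kk\to\hh$ is a Lie algebra homomorphism, and $\kk$ is a Lie algebra by the discussion preceding Eq.~\eqref{Eq:Bra}. The image of a Lie algebra homomorphism is a subalgebra, so $O^A_0=\partial(\kk)\leq\hh$. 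This parallels Corollary~\ref{Cor:lsubalg}, where $\ll=\rho_e(\kk)$.

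\emph{The isotropy.} From the description of $\G_y(A)$ at the start of Subsection~\ref{ssec:2},
\[
\G_0(A)\cong\lbrace(g;z)\in G\times\kk\mid\partial z=0\rbrace\times\lbrace0\rbrace=G\times\ker(\partial).
\]
This is a trivial vector bundle over $G$, and it is independent of the splitting because it equals $\ker(s)\cap\ker(t)$ under right translation. Two further points need checking.
\begin{enumerate}
\item $\G_0(A)$ is the restriction of $A$ to the source and target fibres over $0$, which contain no $\sigma$-direction. It is therefore $\Gamma(\ker s\cap\ker t)$, and its right-invariant sections $z^R$ with $z\in\ker(\partial)$ are closed under $[\cdot,\cdot]^1$. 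Indeed, $t_*[z_0^R,z_1^R]=\partial[z_0,z_1]=[\partial z_0,\partial z_1]^0=0$, and Lemma~\ref{lemma:Source} gives $s_*=0$. So $\ker(\partial)$ is an ideal of $\kk$, being the kernel of a homomorphism.
\item I would make each fibre $\lbrace g\rbrace\times\ker(\partial)$ a Lie algebra by transporting the bracket of $\ker(\partial)$ along right translation. The resulting Lie algebra bundle is then the trivial one $G\times\ker(\partial)$.
\end{enumerate}
This mirrors Corollary~\ref{Cor:IsoAt e}, with the roles of $\rho$ and $t$ exchanged.

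\emph{Main obstacle.} The delicate step is the precise meaning of ``bundle of Lie algebras''. One needs the bracket on $\ker(\partial)$-valued sections to be $C(G)$-bilinear, so that it is pointwise, and not merely defined on constant sections. The plan is to use Eq.~\eqref{Eq:Bra} together with the Leibniz rule: for $f\in C(G)$ the bracket of constant sections picks up an anchor term $\rho(z_0^R)(f)\,z_1^R=\rho_e(z_0)^G(f)\,z_1^R$. This term does not vanish unless $\rho_e$ kills $\ker(\partial)$, which I do not expect in general. So I would state the structure as the trivial bundle $G\times\ker(\partial)$ carrying the fibrewise bracket of $\ker(\partial)$. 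I would justify it through the isomorphism $\G_0(A)\cong G\times\ker(\partial)$ and the fact that $\ker(\partial)\leq\kk$ is a Lie subalgebra, rather than by restricting the algebroid bracket of $A$.
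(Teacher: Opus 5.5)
Your proposal is correct and follows essentially the paper's argument. You identify $O^A_0=\Img(\partial)$ via Proposition~\ref{Prop:GpdOrbits}, conclude it is a subalgebra because $\partial$ is a homomorphism by Proposition~\ref{Prop:Target}, and take $\G_0(A)\cong G\times\ker(\partial)$ with fibres the Lie algebra $\ker(\partial)$. The paper reaches that identification differently: the stabilizer of $\Img(\partial)$ is all of $G$, so the zero section splits the sequence~\eqref{Eq:IsoCoreSeq} of Proposition~\ref{Prop:GpdIsotropies}, whereas you read it off directly from the description of $\G_y(A)$. Your caveat, that the fibrewise bracket is transported from $\ker(\partial)$ rather than obtained by restricting $[\cdot,\cdot]^1$, agrees with how the paper's phrase ``the fibres are Lie algebras by Proposition~\ref{Prop:Target}'' should be read.
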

\begin{proof}
    It follows from Proposition~\ref{Prop:GpdOrbits} that $O^A_0=\Img(\partial)$; hence, Proposition~\ref{Prop:Target} implies it is a Lie subalgebra. On the other hand, $G^{\Delta^0}_{\Img(\partial)}=G$; therefore, the zero section splits the sequence~\eqref{Eq:IsoCoreSeq} of Proposition~\ref{Prop:GpdIsotropies} and the fibres are Lie algebras by Proposition~\ref{Prop:Target}.
\end{proof}


\subsubsection{The multiplication}\label{sss-mult}
The multiplication map $m:A\times_\hh A\longrightarrow A$ is a bundle map that covers the multiplication of $G$, $m_G:G\times G\longrightarrow G$. 
Since $m$ is a Lie algebroid morphism, $m^!:\Gamma(A^{(2)})\longrightarrow\Gamma(m_G^!A)$ is a Lie algebra homomorphism. By definition, if $m_G^!\hh:=TG^2\times_{m_G^*TG}m_G^*A$ is the pull-back algebroid, its anchor is given by the projection onto $TG^2$ and its Lie bracket is given as follows. Since $m_G^!A\cong pr_2^*TG\oplus m_G^*A\cong G^2\times(\gg\oplus\kk\oplus\hh)$, the bracket on $\Gamma(m_G^!\hh)=C(G^2)\otimes_{C(G)}\big(\XX(G)\oplus\Gamma(A)\big)\cong C(G^2,\gg\oplus\kk\oplus\hh)$ is determined by the bracket of constant sections; in order, this is given by the direct sum of the Lie brackets in $\gg$ and that of Eq.~\eqref{Eq:Bra}. Now, since $s$ and $t$ are Lie algebroid morphisms, $\Gamma(A^{(2)})\leq\Gamma(A^2)$ and the Lie bracket on $\Gamma(A^{(2)})\cong C( G^2,\kk^2\oplus\hh)$ is easily computed using~\eqref{Eq:Bra} at the level of constant functions. Before writing down the Lie algebra homomorphism condition for $m^!$ at constant sections, recall that the formula~\eqref{Eq:Bra} hinged on $\Gamma^R(\ker(s))$ being a Lie subalgebra of $\Gamma(A)$. That is the content of the next Lemma. 
\begin{lemma}\label{lemma:RInvBra}
    Let $A\rightrightarrows\hh$ be an LA-group with core $\kk$. Then, for all $z_0,z_1\in\kk$, $[z_0^R,z_1^R]^1\in\Gamma(A)$ is right-invariant. 
\end{lemma}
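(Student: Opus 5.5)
The plan is to exploit that the multiplication $m:A\times_\hh A\to A$ is a Lie algebroid morphism covering $m_G$, and to realize right translation by an arrow as a composite of algebroid morphisms. Right-invariance of a section $a\in\Gamma(\ker(s))$ means $a_{gh}=a_g\Join 0_h$ for all $g,h\in G$. Equivalently, on $G^2$ the pair of sections $(a,0)\in\Gamma(A^{(2)})$, i.e. $(g,h)\mapsto(a_g,0_h)$, is $m$-related to $a$, in the sense that $m\circ(a\times 0)=a\circ m_G$. This pair is composable because $s(a_g)=0=t(0_h)$.

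\textbf{Step 1.} Observe that for $z\in\kk$ the section $Z:=(z^R\circ pr_1,\,0)$ of $A^{(2)}\to G^2$ is a genuine section of the fibered product subalgebroid $A^{(2)}\leq A\times A$. By construction $m_*(Z)=m_G^*(z^R)$, since $z^R_g\Join0_h=z\Join0_g\Join0_h=z^R_{gh}$. Its anchor is $(\rho(z^R_g),0_h)$. Using Eq.~\eqref{Eq:RtoR}, this is $(\rho_e(z)^G_g,0_h)$, and $m_G$ maps it to $\rho_e(z)^G_{gh}$. Hence $m^!(Z)=(\rho^{(2)}(Z),m_G^*z^R)$ is the pull-back section of $m_G^!A$ associated with $z^R$.

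\textbf{Step 2.} Apply the homomorphism property of $m^!$, from the hypothesis that $m$ is a Lie algebroid morphism, to $Z_0,Z_1$ built from $z_0,z_1$. On the left-hand side, the bracket in $A^{(2)}\leq A\times A$ is computed componentwise. It equals $([z_0^R,z_1^R]^1\circ pr_1,\,[0,0])=([z_0^R,z_1^R]^1\circ pr_1,0)$, and it lies in $\Gamma(A^{(2)})$ because, by Lemma~\ref{lemma:Source}, $[z_0^R,z_1^R]^1\in\Gamma(\ker(s))$. Its image under $m$ at $(g,h)$ is therefore $[z_0^R,z_1^R]^1_g\Join0_h$. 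On the right-hand side, I would use the standard property of pull-back algebroids: for sections of the form $(X,f^*a)$ with $X$ $f$-related to $\rho(a)$, the bracket is $f^*[a,b]$ together with the bracket of the vector fields. This gives $[z_0^R,z_1^R]^1_{gh}$ at $(g,h)$. Equating the two yields $[z_0^R,z_1^R]^1_{gh}=[z_0^R,z_1^R]^1_g\Join 0_h$, which is right-invariance.

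The main obstacle is Step 2's right-hand side, namely justifying that $m_G$-related (pulled-back) sections bracket to the pulled-back bracket. If $m_G^*z^R$ is expanded in a frame of constant sections $C(G^2)\otimes\Gamma(A)$, the extra Leibniz terms in $[(X_0,\sum f_i a_i),(X_1,\sum g_j b_j)]$ must be shown to cancel. This works because the coefficients are constant along the directions where they would otherwise contribute: $z^R$ has constant coefficients $(z,0)$ in $C(G,\kk\oplus\hh)$ under the trivialization. So $m_G^*z^R$ has constant coefficients, and the bracket reduces to $m_G^*$ of Eq.~\eqref{Eq:Bra}-type constant brackets. I would handle this subtle point by working directly in the trivialization $A\cong G\times(\kk\oplus\hh)$, where $z^R=(z,0)$ is constant, so that all pull-back brackets of constant functions are constant.
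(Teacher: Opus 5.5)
Your proof is correct and is essentially the paper's argument: the paper characterizes right-invariance of $Z\in\Gamma(\ker(s))$ as $m_*(Z,0)=1\otimes Z$ and reads off the second component of $m^!\big([z_0^R,z_1^R]^1,0\big)=\big[m^!(z_0^R,0),m^!(z_1^R,0)\big]$, using Lemma~\ref{lemma:Source} to see that the left-hand side is a section of $A^{(2)}$, exactly as in your Steps 1--2. The ``main obstacle'' you flag is not one. In $C(G^2)\otimes_{C(G)}\Gamma(A)$ the pull-back section is literally $1\otimes z^R$, so the defining bracket formula of the pull-back algebroid gives $1\otimes[z_0^R,z_1^R]^1$ with no Leibniz terms. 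Also, do not phrase this as ``pull-back brackets of constant functions are constant'': read literally, that presupposes the very right-invariance you are proving.
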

\begin{proof}
    A section $Z\in\Gamma(\ker(s))$ is right-invariant if and only if $m_*(Z,0)=1\otimes Z\in C(G^2)\otimes\Gamma(A)\cong\Gamma(m^*A)$. If $z_0,z_1\in\kk$, it follows from Lemma~\ref{lemma:Source} that $[z_0^R,z_1^R]^1\in\ker(s)$. Thus, since the anchor map is always a Lie algebra homomorphism, the first entry in the relation $m^!\big([z_0^R,z_1^R]^1,0\big)=\big[m^!(z_0^R,0),m^!(z_1^R,0)\big]$ is inconsequential. The second entry, on the other hand, reads $$m_*\big([z_0^R,z_1^R]^1,0\big)=1\otimes[z_0^R,z_1^R]^1,$$ as desired.    
\end{proof}
We collect all the remaining bracket relations in the following Proposition.
\begin{prop}\label{Prop:AllMult}
    Let $A\rightrightarrows\hh$ be a VB-group over $G$ with core $\kk$ and let $(\partial,\Delta^\hh,\Delta^\kk,\Omega)$ be the RUTH induced by a unit extending splitting $\sigma$ of~\eqref{Eq:CoreSeq}. Suppose further that $A$ and $\hh$ carry Lie algebroid structures for which the source and the target maps $s$ and $t$ are Lie algebroid morphisms. Then, the multiplication $m:A^{(2)}\to\hh$ is a Lie algebroid morphism if and only if the anchor $\rho$ is a Lie groupoid homomorphism, the bracket of right-invariant sections is right-invariant, and the following relations hold 
    for all $g,h\in G$, $z_0,z_1,z\in\kk$ and $y,y_0,y_1\in\hh$:
    \begin{align*}
        \textnormal{(M01) } & 
        \ell_g^{\partial z_0}(z_1)=[\Delta^\kk_g z_0,z_1]-(d_g\Delta^\kk)_{\rho_e(z_1)^G_g}(z_0), \\ 
        \textnormal{(M02) } & 
        \ell_{gh}^y(z)-\ell_{g}^{\Delta^\hh_h y}(z)=[\Omega_{(g,h)}(y),z]-(d_{(g,h)}\Omega)_{(\rho_e(z),0)^{G^2}_{(g,h)}}(y), \\
        \textnormal{(M11) } & \omega_g(\partial z_0,\partial z_1)=
        [\Delta^\kk_gz_0,\Delta^\kk_gz_1]-\Delta^\kk_g[z_0,z_1]+ \\
         & \qquad\qquad\qquad\qquad\qquad\quad+(d_g\Delta^\kk)_{\rho_g(\partial z_0^\sigma)}(z_1)-(d_g\Delta^\kk)_{\rho_g(\partial z_1^\sigma)}(z_0), \\
        \textnormal{(M12) } & \Delta^\kk_g\ell_h^y(z)-\ell_{gh}^y(\Delta^\kk_gz)+\omega_g(\Delta^\hh_h y,\partial z)=[\Omega_{(g,h)}(y),\Delta^\kk_gz]+ \\
            & \qquad\qquad\qquad\qquad +(d_g\Delta^\kk)_{\rho_g(\Delta^\hh_hy^\sigma)}(z)-(d_{(g,h)}\Omega)_{(\rho_g(\partial z^\sigma),\rho_e(z)^G_h)}(y),
    \\
         \textnormal{(M22) } & 
        \Delta^\kk_g\omega_h(y_0,y_1)-\omega_{gh}(y_0,y_1)+\omega_g(\Delta^\hh_hy_0,\Delta^\hh_hy_1)+ \\
            +\ell_{gh}^{y_0}( & \Omega_{(g,h)}(y_1))-\ell_{gh}^{y_1}(\Omega_{(g,h)}(y_0))=\Omega_{(g,h)}([y_0,y_1]^0)+[\Omega_{(g,h)}(y_0),\Omega_{(g,h)}(y_1)]+ \\
            &  +(d_{(g,h)}\Omega)_{\big(\rho_g(\Delta^\hh_hy_1^\sigma),\rho_h(y_1^\sigma)\big)}(y_0)-(d_{(g,h)}\Omega)_{\big(\rho_g(\Delta^\hh_hy_0^\sigma),\rho_h(y_0^\sigma)\big)}(y_1).   
    \end{align*}
\end{prop}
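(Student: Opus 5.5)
We test the defining condition for $m^!:\Gamma(A^{(2)})\to\Gamma(m_G^!A)$ to be a Lie algebra homomorphism on a set of generators of $\Gamma(A^{(2)})$. Under the identification $A^{(2)}\cong G^2\times(\kk^2\oplus\hh)$ coming from the unit extending splitting $\sigma$, the module $\Gamma(A^{(2)})$ is generated over $C(G^2)$ by three kinds of constant sections. These are $Z_0:=(z^R,0)$, $Z_1:=((\partial z)^\sigma,z^R)$, and $Y:=((\Delta^\hh_hy)^\sigma,y^\sigma)$, the last being interpreted pointwise at $(g,h)$. Both $m^!$ and the brackets satisfy the Leibniz rule with respect to the anchors, so it suffices to check the homomorphism condition on pairs of these generators. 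The first (tangent) component of $m^!$ is $(\rho(a_0),\rho(a_1))$, and it is pushed forward by $dm_G$. Matching the $TG$-part of the condition is therefore precisely the requirement that $\rho$ be multiplicative, and we invoke Proposition~\ref{Prop:Rho&RUTH}, equivalently Eqs.~\eqref{Eq:AnchorRUTH}--\eqref{Eq:AnchorRUTH'}, for it. The $A$-component is computed via formula~\eqref{Eq:StrMult}: by definition $m_*$ of a generator is the $C(G^2,\kk\oplus\hh)$-valued function
\[
(g,h)\mapsto\bigl(z_0+\Delta^\kk_gz_1-\Omega_{(g,h)}(y),\,y\bigr).
\]

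For each pair of generators we compute the two sides. On the left, we take the bracket in $\Gamma(A^{(2)})\leq\Gamma(A^2)$, which is the componentwise bracket~\eqref{Eq:Bra}. Since the entries of the generators such as $\Delta^\hh_hy$ depend on $h$, this produces derivative terms, and these are handled by the $TG$-part of the anchor on the other factor. We then apply $m_*$. On the right, we bracket $m_*$ of the generators in $m_G^!A$, again using~\eqref{Eq:Bra} at the point $gh$. The nonconstant coefficients $\Delta^\kk_g$ and $\Omega_{(g,h)}$ now get differentiated along $\rho(a_0)\ast\rho(a_1)$, that is, along the anchors of the two components separately. This is the source of the terms $(d_g\Delta^\kk)_{\cdots}$ and $(d_{(g,h)}\Omega)_{(\cdot,\cdot)}$.

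The pair $(Z_0,Z_0')$ gives $m_*([z_0^R,z_1^R],0)=1\otimes[z_0^R,z_1^R]$, which is exactly right-invariance of the bracket of right-invariant sections, as in Lemma~\ref{lemma:RInvBra}. The pairs $(Z_0,Z_1)$ and $(Z_0,Y)$ yield (M01) and (M02). The pair $(Z_1,Z_1')$ yields (M11), where the $\hh$-entry $[\partial z_0,\partial z_1]^0=\partial[z_0,z_1]$ is automatic because $\partial$ is a homomorphism (Proposition~\ref{Prop:Target}). The pair $(Z_1,Y)$ yields (M12), and $(Y,Y')$ yields (M22). Throughout, we use that $s$ and $t$ are morphisms, via Lemma~\ref{lemma:Source} and Proposition~\ref{Prop:Target}, so that all brackets stay in $\Gamma(A^{(2)})$ and the $\hh$-components agree automatically. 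For the converse, running the same computations backwards shows that the relations make $m^!$ agree with a homomorphism on generators, and hence everywhere by $C(G^2)$-linearity and the Leibniz rule.

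The main obstacle is the bookkeeping in (M12) and (M22). There the left-hand bracket contains $\ell$- and $\omega$-terms evaluated at $g$ and $h$ with transformed arguments such as $\Delta^\hh_hy$ and $\partial z$. The $\Delta^\kk_g$ applied to the second component has to be correctly identified with $\Delta^\kk_g\ell_h$ and $\Delta^\kk_g\omega_h$. The right-hand side picks up $\ell_{gh}$ evaluated on $\Omega$-terms together with the quadratic term $[\Omega(y_0),\Omega(y_1)]$. We will organize these computations by first isolating the derivative terms, which come only from nonconstant coefficients, and then comparing the purely algebraic terms, simplifying with the RUTH identities of Theorem~\ref{Theo:MainEqce} wherever $\Delta^\hh_h$ or $\Delta^\kk_{gh}$ must be traded for compositions.
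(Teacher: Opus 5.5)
Your proposal is correct and is essentially the paper's own proof: your $Z_0$, $Z_1$, $Y$ are exactly the paper's constant sections $(z,0,0)$, $(0,z,0)$, $(0,0,y)$ of $C(G^2,\kk^2\oplus\hh)$. You also read off right-invariance (as in Lemma~\ref{lemma:RInvBra}) and (M01), (M02), (M11), (M12), (M22) from the same pairs the paper uses, and multiplicativity of $\rho$ enters, as in the paper, through $m$ being anchored.

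Two remarks for the bookkeeping. First, the coefficients are differentiated along $(\rho(a_0),\rho(a_1))\in T_{(g,h)}G^2$, not along $\rho(a_0)\ast\rho(a_1)$. Second, with $m_*(Y)=y^\sigma-\Omega(y)^R$ the pair $(Z_1,Y)$ gives the terms $[\Omega_{(g,h)}(y),\Delta^\kk_gz]$ and $(d_{(g,h)}\Omega)_{(\rho_g(\partial z^\sigma),\rho_e(z)^G_h)}(y)$ of (M12) with signs opposite to the printed ones, while the other relations come out as printed; this looks like a sign slip in the statement, not a flaw in your method.
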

\begin{proof}
    Given constant $(z_0,z_1,y),(z',0,0)\in C(G^2,\kk^2\oplus\hh)$, 
    \begin{eqnarray*}\label{Eq:adZ0}
        \big[(z_0,z_1,y),(z',0,0)\big]  =\Big([z_0,z']+pr_1^*\ell^{\partial z_1+pr_2^*\Delta^\hh y}(z'),0,0\Big).
    \end{eqnarray*}
    The relations (M01) and (M02) correspond to the independent components of the bracket relation $m^!
        \big[(0,z_0,y),(z_1,0,0)\big]=
        \big[m^!(0,z_0,y),m^!(z_1,0,0)\big]$.
    Similarly, given constant $(0,z_0,y),(0,z_1,0)\in C(G^2,\kk^2\oplus\hh)$, 
    \begin{eqnarray*}\label{Eq:adZ1}
    \big[(0,z_0,y),(0,z_1,0)\big]  =\Big(pr_1^*\big(\omega(\partial z_0+pr_2^*\Delta^\hh y,\partial z_1)
    \big),[z_0,z_1]+pr_2^*\ell^y(z_1),0\Big), 
    \end{eqnarray*}
    and the relations (M11) and (M12) correspond to the independent components of the bracket relation $m^!
        \big[(0,z_0,y),(0,z_1,0)\big]=
        \big[m^!(0,z_0,y),m^!(0,z_1,0)\big]$.
    Lastly, given constant $(0,0,y_0),(0,0,y_1)\in C(G^2,\kk^2\oplus\hh)$, 
    \begin{eqnarray*}\label{Eq:adY}
    \big[(0,0,y_0),(0,0,y_1)\big]  =\big(pr_1^*\omega(pr_2^*\Delta^\hh y_0,pr_2^*\Delta^\hh y_1),pr_2^*\omega(y_0,y_1),[y_0,y_1]^0\big),
    \end{eqnarray*}
    and the relation (M22) corresponds to the bracket relation $m^!
        \big[(0,0,y_0),(0,0,y_1)\big]=
        \big[m^!(0,0,y_0),m^!(0,0,y_1)\big]$.   
\end{proof}


\section{Applications to structure}\label{sec:4}
In this section we collect some consequences of the results of Section~\ref{sec:3}. 

\noindent We divide these results in two parts. 
In the first part, mimicking \cite[Theorem 3.6]{MackJotzRaj}, 
we extend Theorem~\ref{Theo:MainEqce} to include the bracket and the anchor in the structure (see Theorem~\ref{Theo:MainEqceSuite}). 
In the second part, we derive results about the structure of an LA-group that parallel the results in Subsection~\ref{ssec:2}.

\subsection{LA-Matched Pairs}\label{ssec-LAMtchPrs}
LA-groupoids, in general, can be seen as an intermediary step between double Lie groupoids and double Lie algebroids.
Both these objects are more symmetric than LA-groupoids in that both of their \emph{side structures} are the same. 
In the case of double Lie algebroids, each side can be regarded as VB-algebroid and, consequently \cite{DoubleVB}, each side corresponds to a representation up to homotopy. Since a similar thing happens in the case of double Lie groupoids \cite{CoreDiagrams}, it stands to reason that, in the case of LA-groupoids, there is some structure complemeting the RUTH $(\partial,\Delta^\hh,\Delta^\kk,\Omega)$ of $G$ on $\kk[1]\oplus\hh$ that defines the VB-groupoid $G\times(\kk\oplus\hh)\rightrightarrows\hh$ over $G$ via Theorem~\ref{Theo:MainEqce}. In this subsection, restricting again to LA-groups, we define this complementary structure as a \textbf{Lie algebra action up to homotopy} (see Definition~\ref{Def:AUTH}). Moreover, in analogy with \cite{MackJotzRaj}, we show that in order for the VB-group $G\times(\kk\oplus\hh)\rightrightarrows\hh$ to turn into an LA-group, one needs an action up to homotopy of $\hh$ on $G\times\kk$ that interacts with the RUTH in a particular manner (see Theorem~\ref{Theo:MainEqceSuite}).

\begin{definition}\label{Def:AUTH}
    An \textbf{action up to homotopy} (AUTH for short) of a Lie algebra $\hh$ on a Lie algebroid $E\to M$ with anchor $\rho:E\longrightarrow TM$ is a triple $(\alpha,\nabla,\omega)$ consisting of linear maps $\alpha:\hh\longrightarrow\XX(M)$, $\nabla:\hh\otimes_\Rr\Gamma(E)\longrightarrow\Gamma(E)$, and a smooth section $\omega\in\Gamma((G\times\wedge^2\hh^*)\otimes E)$ further subject to the following relations:   
    \begin{itemize}
        \item[(i)] $\nabla$ is a connection with respect to $\alpha$ and it acts by derivations; in symbols,
        \begin{align}\label{Eq:AUTHi-connection}
            \nabla_y(\phi\epsilon) & =\phi\nabla_y(\epsilon)+(\Lie_{\alpha(y)})\epsilon, \\ \label{Eq:AUTHi-byDerv}
            \nabla_y([\epsilon_0,\epsilon_1]_E) & =[\nabla_y(\epsilon_0),\epsilon_1]_E+[\epsilon_0,\nabla_y(\epsilon_1)]_E,
        \end{align}
        for all $y\in\hh$, $\phi\in C^\infty(M)$, and $\epsilon,\epsilon_0,\epsilon_1\in\Gamma(E)$.
        \item[(ii)] The anchor map $\rho$ is equivariant; in symbols,
        \begin{align}\label{Eq:AUTHii-Equivce}
            \rho(\nabla_y(\epsilon)) & =[\alpha(y),\rho(\epsilon)]_{\XX(M)}, 
        \end{align}
        for all $y\in\hh$, and $\epsilon\in\Gamma(E)$.
        \item[(iii)] If $R^\nabla$ is the curvature of the connection $\nabla$, 
        \begin{align}\label{Eq:AUTHiii-CurvControl}
            R^\nabla_{(y_0,y_1)}(\epsilon) & =[\omega(y_0,y_1),\epsilon]_E, 
        \end{align}
        for all $y_0,y_1\in\hh$, and $\epsilon\in\Gamma(E)$.
        \item[(iv)] As a 2-cochain with values in $\Gamma(E)$, $\omega\in\wedge^2\hh^*\otimes\Gamma(E)$ is \emph{formally} a 2-cocycle, i.e. $\d_\nabla\omega\equiv0\in\wedge^3\hh^*\otimes\Gamma(E)$.
    \end{itemize}
\end{definition}
\begin{remark}\label{Rmk:AUTH}
    In the context of Definition~\ref{Def:AUTH}, given $y_0,y_1\in\hh$, $\phi\in C^\infty(M)$, and $\epsilon\in\Gamma(E)$, one can easily compute 
    $$R^\nabla_{(y_0,y_1)}(\phi\epsilon)=\phi R^\nabla_{(y_0,y_1)}(\epsilon)+(\Lie_{R^\alpha(y_0,y_1)}\phi)\epsilon,$$
    where $R^\alpha(y_0,y_1)=[\alpha(y_0),\alpha(y_1)]_{\XX(M)}-\alpha([y_0,y_1]^0)$ is the curvature of the \emph{quasi-action} $\alpha$. 
    It follows now from Eq.~\eqref{Eq:AUTHiii-CurvControl}, that the left-hand side equals $[\omega(y_0,y_1),\phi\epsilon]_E$, 
    which, by the Leibnitz identity, implies $R^\alpha(y_0,y_1)=\rho(\omega(y_0,y_1))$, thus justifying calling it an \emph{action} up to homotopy. 
    
    \noindent This notion can presumably be recast in the language of graded manifolds (cf. \cite{DoubleVB}). 
    The reader is also invited to compare the equations defining an AUTH with those defining a RUTH of a Lie algebroid (cf. \cite[Remark 3.7.]{AAC}). 
\end{remark}
The AUTHs of Definition~\ref{Def:AUTH} correspond to \emph{extensions} of Lie algebroids together with a splitting in the sense that RUTHs correspond to VB-groupoids together with a splitting of their core sequences. The following result explains this fact more precisely.
\begin{prop}\label{Prop:MainAUTH}
    Let $(\alpha,\nabla,\omega)$ be an AUTH of the Lie algebra $\hh$ on the Lie algebroid $\pi:E\to M$ with anchor $\rho:E\longrightarrow TM$. Then, there is a Lie algebroid $\mathcal{A}:=E\times\hh$ whose anchor $\rho_\mathcal{A}:\mathcal{A}\to TM$ is given by $\rho_\mathcal{A}(e,y):=\rho(e)+\alpha_{\pi(e)}(y)$ and whose bracket is given by extending the bracket of $\hh$-constant sections $(\epsilon_0,y_0),(\epsilon_1,y_1)\in\Gamma(E)\times\hh$, 
    \begin{align*}
    [(\epsilon_0,y_0),(\epsilon_1,y_1)]_\mathcal{A} & =([\epsilon_0,\epsilon_1]_E+\nabla_{y_0}(\epsilon_1)-\nabla_{y_1}(\epsilon_0)+\omega(y_0,y_1),[y_0,y_1]^0)    
    \end{align*}
    using the Leibnitz identity. Moreover, there is a short exact sequence in the category of Lie algebroids
    \begin{eqnarray*}
    \xymatrix{
        (0) \ar[r] & E \quad\ar@{^{(}->}[r] & \mathcal{A} \ar[r]^{\textnormal{pr}_\hh} & \hh \ar[r] & (0).
    }
    \end{eqnarray*}
    Conversely, given a surjective Lie algebroid morphism $F:\mathcal{A}\longrightarrow\hh$, letting $E=\ker(F)$, there is a short exact sequence in the category of vector bundles 
    \begin{eqnarray}\label{Eq:AUTHExtVB}
    \xymatrix{
        (0) \ar[r] & E\quad \ar@{^{(}->}[r] & \mathcal{A} \ar[r]^{F_\ast\quad} & M\times\hh \ar[r] & (0),
    }
    \end{eqnarray}
    and any splitting $\sigma$ of \eqref{Eq:AUTHExtVB} yields an AUTH of $\hh$ on $E$ given by 
    \begin{align*}
        \alpha:\hh\longrightarrow\XX(M),\quad \alpha(y) & :=\rho_\mathcal{A}(y^\sigma)\textnormal{ for }y\in\hh, \\
        \nabla:\hh\otimes_\Rr\Gamma(E)\longrightarrow \Gamma(E),\quad \nabla_y(\epsilon) & :=[y^\sigma,\epsilon]_{\mathcal{A}}\textnormal{ for }y\in\hh,\epsilon\in\Gamma(E), \\
        \omega:\wedge^2\hh\longrightarrow\Gamma(E),\quad \omega(y_0,y_1) & :=[y_0^\sigma,y_1^\sigma]_{\mathcal{A}}-([y_0,y_1]^0)^\sigma\textnormal{ for }y_0,y_1\in\hh.
    \end{align*}
\end{prop}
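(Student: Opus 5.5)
The plan is to verify both directions by direct computation on $\hh$-constant sections, using the fact that, as a vector bundle, $\mathcal{A}=E\times\hh\cong E\oplus(M\times\hh)$. Hence $\Gamma(\mathcal{A})\cong\Gamma(E)\oplus C^\infty(M,\hh)$ is generated as a $C^\infty(M)$-module by the sections $(\epsilon,0)$ and $(0,y)$.

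\textbf{Direct statement.} First I extend the displayed formula to all sections by the Leibniz rule with respect to $\rho_\mathcal{A}$, and check that the extension is well defined. Skew-symmetry is built in. The formula is compatible with $C^\infty(M)$-linearity in the first slot because of \eqref{Eq:AUTHi-connection}: the term $\nabla_{y_0}(\phi\epsilon_1)$ produces exactly $(\Lie_{\alpha(y_0)}\phi)\epsilon_1$, which is the anchor term $\Lie_{\rho_\mathcal{A}(0,y_0)}\phi$.

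Next I show that $\rho_\mathcal{A}$ preserves brackets on generators.
\begin{itemize}
\item For two elements of $\Gamma(E)$, this is the hypothesis that $E$ is a Lie algebroid.
\item For a mixed pair, it is \eqref{Eq:AUTHii-Equivce}.
\item For two elements of $\hh$, it is the identity $R^\alpha(y_0,y_1)=\rho(\omega(y_0,y_1))$ from Remark~\ref{Rmk:AUTH}.
\end{itemize}

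The main obstacle is the Jacobi identity. Since the Jacobiator of a skew bracket satisfying Leibniz and an anchor morphism property is $C^\infty(M)$-trilinear, it suffices to check it on constant generators, type by type.
\begin{itemize}
\item For $(E,E,E)$, it is Jacobi in $E$.
\item For $(\hh,E,E)$, it is the derivation property \eqref{Eq:AUTHi-byDerv}.
\item For $(\hh,\hh,E)$, the Jacobiator equals $R^\nabla_{(y_0,y_1)}\epsilon-[\omega(y_0,y_1),\epsilon]_E$, which vanishes by \eqref{Eq:AUTHiii-CurvControl}.
\item For $(\hh,\hh,\hh)$, the $\hh$-component is Jacobi in $\hh$. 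The $E$-component is the cyclic sum $\sum\big(\nabla_{y_i}\omega(y_j,y_k)-\omega([y_i,y_j]^0,y_k)\big)=\d_\nabla\omega(y_0,y_1,y_2)$, which vanishes by (iv).
\end{itemize}
I expect sign bookkeeping in these last two cases to be the only delicate point.

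For the exact sequence, the inclusion $E\hookrightarrow\mathcal{A}$ is bracket-preserving and anchored by construction. The map $\textnormal{pr}_\hh$ is anchored over $M\to\ast$. It is a morphism because the $\hh$-component of the bracket of constant sections is $[y_0,y_1]^0$, while sections of $E$ map to $0$. This is the same argument as in Lemma~\ref{lemma:Source}.

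\textbf{Converse.} Since $F$ is a fibrewise surjective morphism of Lie algebroids covering $M\to\ast$, the kernel $E$ is a subbundle. The argument of Lemma~\ref{lemma:Source} shows that $\Gamma(E)$ is closed under the bracket, so $E$ is a Lie algebroid with the restricted anchor $\rho$. This also gives $\nabla_y\epsilon=[y^\sigma,\epsilon]_\mathcal{A}\in\Gamma(E)$, because $F_\ast$ of it is $[y,0]=0$, and $\omega(y_0,y_1)\in\Gamma(E)$, because $F_\ast[y_0^\sigma,y_1^\sigma]=[y_0,y_1]^0$.

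Each axiom of Definition~\ref{Def:AUTH} is then a consequence of the Lie algebroid axioms of $\mathcal{A}$.
\begin{itemize}
\item The connection property \eqref{Eq:AUTHi-connection} is the Leibniz rule.
\item The derivation property \eqref{Eq:AUTHi-byDerv} is Jacobi on $(y^\sigma,\epsilon_0,\epsilon_1)$.
\item Equivariance \eqref{Eq:AUTHii-Equivce} holds because the anchor of $\mathcal{A}$ preserves brackets.
\item The curvature control \eqref{Eq:AUTHiii-CurvControl} is Jacobi on $(y_0^\sigma,y_1^\sigma,\epsilon)$ rewritten via $[y_0^\sigma,y_1^\sigma]=\omega(y_0,y_1)+[y_0,y_1]^{0\,\sigma}$.
\item The cocycle condition (iv) is Jacobi on $(y_0^\sigma,y_1^\sigma,y_2^\sigma)$.
\end{itemize}
These are the same computations as in the direct statement, read backwards. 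Finally, the bracket of $\mathcal{A}$ in the splitting takes exactly the displayed form, mirroring \eqref{Eq:Bra}.
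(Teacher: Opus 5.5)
The paper states Proposition~\ref{Prop:MainAUTH} without proof, so there is no written argument to compare yours with. Your direct verification on constant sections is the natural argument, in the spirit of Lemma~\ref{lemma:Source} and Proposition~\ref{Prop:AllMult}, and it is correct. The reductions you rely on hold:
\begin{itemize}
\item The defect $\rho_\mathcal{A}[a,b]_\mathcal{A}-[\rho_\mathcal{A}a,\rho_\mathcal{A}b]$ is $C^\infty(M)$-bilinear.
\item Once that defect vanishes, the Jacobiator is $C^\infty(M)$-trilinear, so the four generator types suffice.
\end{itemize}
In each generator case the Jacobiator is exactly the corresponding AUTH axiom. In the converse, each axiom is the corresponding instance of Leibniz, Jacobi, or bracket-compatibility of the anchor in $\mathcal{A}$. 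On signs: with the convention $[[a,b],c]+$ cyclic, the $(\hh,\hh,E)$ and $(\hh,\hh,\hh)$ Jacobiators come out as $[\omega(y_0,y_1),\epsilon]_E-R^\nabla_{(y_0,y_1)}\epsilon$ and $-(\d_\nabla\omega)(y_0,y_1,y_2)$. These are the negatives of what you wrote, which is harmless since both vanish.

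One step needs a flag: the $(\hh,\hh)$ case of the anchor check, where you import $R^\alpha(y_0,y_1)=\rho(\omega(y_0,y_1))$ from Remark~\ref{Rmk:AUTH}. That remark deduces the identity from (i) and (iii) by testing against sections $\epsilon$, which only works where $E$ has positive rank.

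If $E$ is the zero bundle, as in the vacant case $\kk=(0)$ of Subsection~\ref{sss-vacant}, axioms (i)--(iv) are vacuous and every linear $\alpha$ is an AUTH. Then $\mathcal{A}=M\times\hh$ fails Jacobi unless $\alpha$ is a Lie algebra homomorphism. For example, take $M=\Rr$, $\hh=\Rr^2$ abelian, $\alpha(e_1)=\partial_x$ and $\alpha(e_2)=x\partial_x$. The Jacobiator $J$ then gives $J(e_1,e_2,xe_1)=-e_1\neq0$.

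So the direct half of the statement, and your proof of it, needs either $\mathrm{rank}\,E>0$ or $R^\alpha=\rho\circ\omega$ added as an axiom in Definition~\ref{Def:AUTH}. The converse is unaffected. There, $R^\alpha=\rho\circ\omega$ follows by applying $\rho_\mathcal{A}$ to $[y_0^\sigma,y_1^\sigma]_\mathcal{A}=\omega(y_0,y_1)+([y_0,y_1]^0)^\sigma$.
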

In the upcoming example, we are a bit careless with the overlapping notation between the quasi-action $\alpha$ of an AUTH and the $\alpha$ of Section~\ref{sec:3}. 
Though they are not equal, they are related by the right trivialization of $TG\cong G\times\gg$.
In the sequel, we abuse notation a bit and define vector fields associated to constant functions in $C(G,\kk\oplus\hh)$ as follows: 
For $z\in\kk$ and $y\in\hh$, $\rho(z),\alpha(y)\in\XX(G)$ are given by $\rho(z)_g:=\rho_e(z)\ast0_g$ and $\alpha(y)_g:=\alpha_g(y)\ast 0_g$ for $g\in G$.
\begin{ex:}\label{Ex:LA-AUTH}
    Let $A\rightrightarrows\hh$ be an LA-group over $G$ and let $\sigma$ be a splitting of its core sequence~\eqref{Eq:CoreSeq}. Then, since $s:A\to\hh$ is a Lie algebroid morphism, Proposition~\ref{Prop:MainAUTH} implies there is an AUTH $(\alpha,\nabla,\omega)$ of $\hh$ on the subalgebroid $\ker(s)\leq A$, where $\alpha$ and $\omega$ coincide with those in Section~\ref{sec:3}, and where, using the isomorphism $\Gamma(\ker(s))\cong C(G,\kk)$,
    \begin{align}\label{Eq:nabla}
        (\nabla_yZ)_g=\ell_g^y(Z_g)+(d_gZ)_{\alpha(y)_g}
    \end{align}
    for $y\in\hh$, $g\in G$ and $Z\in C(G,\kk)$. Here, $\ell$ is also that of Section~\ref{sec:3} (see~\eqref{Eq:ell}).

    \noindent Note that if $\sigma$ extends the unit, both $\alpha_e$ and $\omega_e$ vanish identically. 
\end{ex:}
The AUTH of Example~\ref{Ex:LA-AUTH} is, of course, very special. 
Firstly, if $\kk$ is the core of $A\rightrightarrows\hh$, the Lie algebroid $\ker(s)$ is isomorphic to $G\times\kk$, whose anchor and bracket are determined by their values at the identity. Furthermore, if $\sigma$ extends the unit, Theorem~\ref{Theo:MainEqce} implies there is a RUTH $(\partial,\Delta^\hh,\Delta^\kk,\Omega)$ of $G$ on $\kk[1]\oplus\hh$, and
as it turns out the Lie bracket on $\kk$ is constrained by Eq.~(M01) in Proposition~\ref{Prop:AllMult} at $e$ to be
\begin{eqnarray}\label{Eq:coreBra}
    [z_0,z_1]=\ell_{e}^{\partial z_0}(z_1)+(d_{e}\Delta^\kk)_{\rho_e(z_1)}(z_0).
\end{eqnarray}
That is, the Lie algebroid structure on $\ker(s)$ is determined by $\rho_e$, $\ell_e$, $\partial$ and $\Delta^\kk$.
Similarly, the connection $\nabla$ is also ultimately determined by the RUTH, $\alpha$ and its value at constant functions at the identity; 
indeed, $\nabla$ is determined by $\alpha$ and $\ell$ (see Eq.~\eqref{Eq:nabla}), 
and evaluating Eq.~(M02) in Proposition~\ref{Prop:AllMult} at $(e,g)$ yields 
\begin{eqnarray}\label{Eq:ellsuite}
    \ell_{g}^y(z)=\ell_{e}^{\Delta^\hh_g y}(z)-(d_{(e,g)}\Omega)_{(\rho_e(z),0_g)}(y).
\end{eqnarray}
Hence, $\ell$ is defined in terms of $\rho_e$, $\ell_e$, $\Delta^\hh$ and $\Omega$. 
Out of the AUTH of Example~\ref{Ex:LA-AUTH}, one gets a 4-tuple $(\rho_e,\alpha,\ell_e,\omega)$, 
where, after trivializing by right translations, 
$\alpha\in C(G,\hh^*\otimes\gg)$ and $\omega\in C(G,\wedge^2\hh^*\otimes\kk)$ are functions that vanish at the identity. 
This 4-tuple resembles more clearly a representation up to homotopy, 
as the defining equations for the AUTH restrict to   
\begin{align}\label{Eq:AnchorRel1@e}
    \rho_e(\ell_e^yz) & =-(d_e\alpha)_{\rho_e(z)}(y), \\ \label{Eq:AnchorRel2}
    [\alpha(y_0),\alpha(y_1)]-\alpha([y_0,y_1]^0) & =\rho_e(\omega(y_0,y_1)), \\ \label{Eq:ellRep@e}
    \big([\ell_e^{y_0},\ell_e^{y_1}]-\ell_e^{[y_0,y_1]^0}\big)(z) & =(d_e\omega)_{\rho_e(z)}(y_0,y_1),
\end{align}
plus the cocycle equation for $\omega$.

\noindent Before we state the next definition, we introduce the following notation. 
Let $\Delta^\hh$ and $\Delta^\kk$ be quasi-actions of $G$ on $\hh$ and $\kk$, respectively. Then, we define a degree $+1$ map
\begin{align*}
    \delta^\Delta & :C(G^\bullet,\hh^*\otimes\kk)\longrightarrow C(G^{\bullet+1},\hh^*\otimes\kk) 
\end{align*}
$$(\delta^\Delta\theta)_{(g_0,...,g_n)}:=\Delta^\kk_{g_0}\circ\theta_{(g_1,...,g_n)}+\sum_{k=1}^{n}(-1)\theta_{\delta_k(g_0,...,g_n)}+(-1)^{n+1}\theta_{(g_0,...,g_{n-1})}\circ\Delta^\hh_{g_n},$$
where $\theta\in C(G^n,\hh^*\otimes\kk)$, $g_0,...,g_n\in G$, and $\delta_k$ stands for the $k$-th face map in the nerve of $G$. 
Computing $(\delta^\Delta\circ\delta^\Delta)\theta$ and evaluating at $(g_0,...,g_{n+1})\in G^{n+2}$ yields 
$$(\Delta^\kk_{g_0}\circ\Delta^\kk_{g_1}-\Delta^\kk_{g_0g_1})\circ\theta_{(g_2,...,g_{n+1})}+\theta_{(g_0,...,g_{n-1})}\circ(\Delta^\hh_{g_ng_{n+1}}-\Delta^\hh_{g_n}\circ\Delta^\hh_{g_{n+1}});$$
therefore, $\delta^\Delta$ is not a differential unless both $\Delta^\hh$ and $\Delta^\kk$ are representations. 
If the quasi-actions define a RUTH along with some $\partial$ and some $\Omega$, then $\delta^\Delta\Omega=0$. 
This justifies our calling this equation the \emph{cocycle equation for the curvature} in the proof of Proposition~\ref{Prop:GpdIsotropies} and in Remark~\ref{Rmk:WhyNotOmega}. 
We abuse notation a bit and write $\delta^\Delta$ when taking the quasi-action $\Delta^\hh\times\Delta^\hh$ on $\wedge^2\hh$.

\begin{definition}\label{Def:LAMtchdPair}, 
    Let $G$ be a Lie group with Lie algebra $\gg$ and identity element $e$, let $\hh$ be a Lie algebra, and let $\kk$ be a vector space. Further, let $$(\rho_e,\alpha,\ell_e,\omega)\in(\kk^*\otimes\gg)\times C(G,\hh^*\otimes\gg)\times(\hh^*\otimes\kk^*\otimes\kk)\times C(G,\wedge^2\hh^*\otimes\kk),$$
    where both $\alpha$ and $\omega$ vanish identically at the identity.
    A RUTH $(\partial,\Delta^\hh,\Delta^\kk,\Omega)$ of $G$ on $\kk[1]\oplus\hh$ and $(\rho_e,\alpha,\ell_e,\omega)$ are said to form an \textbf{LA-matched pair} if 
    \begin{enumerate}
        \item Eq.~\eqref{Eq:coreBra} defines a Lie bracket on $\kk$ and consequently a Lie algebroid structure on $G\times\kk$ by extending the bracket of constant sections using the anchor $\rho(g;z)=\rho_e(z)\ast0_g$ and the Leibnitz identity;
        \item Eq.~\eqref{Eq:nabla}, where $\ell_g$ is given by Eq.~\eqref{Eq:ellsuite}, defines a connection that turns $(\alpha,\nabla,\omega)$ into an AUTH of $\hh$ on $G\times\kk$;
        \item $\partial$ is $\hh$-equivariant up to $\Img(\rho_e)$, i.e. Given $y\in\hh$ and $z\in\kk$, 
        \begin{align}\label{Eq:h-equiv}
            [y,\partial z]^0-\partial(\ell_e^yz)=(d_e\Delta^\hh)_{\rho_e(z)}(y);
        \end{align}
        \item $\rho_e$ is $G$-equivariant up to $\Img(\partial)$, i.e. Given $g\in G$ and $z\in\kk$, 
        \begin{align}\label{Eq:G-equiv}
            \rho_e(\Delta^\kk_g z)-\Ad_g(\rho_e(z))=\alpha_{g}(\partial z);
        \end{align}
        \item $\Delta^\hh$ and $\alpha$ combine to define a connection on the product Lie algebroid $TG\times\hh$ that is flat up to $\omega$, i.e. The connection 
        \begin{align*}
            \nabla^\hh:\hh\otimes\big(\XX(G)\oplus & C(G,\hh)\big)\longrightarrow\XX(G)\oplus C(G,\hh), \\
            \nabla^\hh_y(X,Y) & :=[(\alpha(y),\Delta^\hh y),(X,Y)]
        \end{align*}
        has curvature 
        \begin{align}\label{Eq:AlmostFlat}
            R^{\nabla^\hh}_{(y_0,y_1)}(X,Y)=\big[(\rho(\omega(y_0,y_1)),\partial\omega(y_0,y_1)),(X,Y)\big]
        \end{align}
        for $y_0,y_1\in\hh$ and $(X,Y)\in\XX(G)\oplus C(G,\hh)$;
        \item $\Delta^\hh$ and $\alpha$ combine to define a quasi-action on $\gg\oplus\hh$ that is a representation up to $\Omega$, i.e. The map
        \begin{align*}
            \Xi^\hh & :G\longrightarrow\textnormal{End}(\gg\oplus\hh), \\
            \Xi_g^\hh(x,y) & :=(\Ad_g(x)+\alpha_g(y),\Delta^\hh_g y)
        \end{align*}
        is such that  
        \begin{align}\label{Eq:AlmostRepn}
        \Xi^\hh_{gh}(x,y)-\Xi^\hh_{g}\circ\Xi^\hh_{h}(x,y)=\big(\rho_e(\Omega_{(g,h)}(y)),\partial\Omega_{(g,h)}(y)\big)
        \end{align}
        for $g,h\in G$ and $(x,y)\in\gg\oplus\hh$;
        \item $\Delta^\kk$ and $\nabla$ commute up to the curvatures, i.e. Given $g\in G$, $y\in\hh$ and $z\in\kk$, 
         \begin{align}\label{Eq:AlmostCommute}       
            (\nabla_y(\Delta^\kk z))_g-\Delta^\kk_g(\nabla_y(z))=\omega_g(y,\partial z)+(d_{(g,e)}\Omega)_{(0_g,\rho_e(z))}(y);
         \end{align}
        \item $\Omega\in\hh^*\otimes C(G^2,\kk)$ verifies a Maurer-Cartan type equation up to the coboundary $\delta^\Delta\omega\in \wedge^2\hh^*\otimes(G^2,\kk)$ with respect to the extended connection 
        \begin{align}\label{Eq:nabla2}
            (\nabla_yZ)_{(g,h)}=\ell_{gh}^y(Z_{(g,h)})+(d_{(g,h)}Z)_{(\alpha(\Delta^\hh_hy)_g,\alpha(y)_h)}
        \end{align}
        for $y\in\hh$, $(g,h)\in G^2$ and $Z\in C(G^2,\kk)$, i.e. Given $g,h\in G$ and $y_0,y_1\in\hh$,
        \begin{align}\label{Eq:MCEq}       
            (\d_\nabla\Omega)_{(g,h)}(y_0,y_1)+[\Omega_{(g,h)}(y_0),\Omega_{(g,h)}(y_1)]=(\delta^\Delta\omega)_{(g,h)}(y_0,y_1).
        \end{align}
    \end{enumerate}
\end{definition}

\begin{remark}\label{Rmk:MatchedPairs}
    Definition~\ref{Def:LAMtchdPair} should be compared with the definition of matched pairs of representations up to homotopy of Lie algebroids \cite[Definition 3.1]{MackJotzRaj}. A RUTH of the Lie group $G$ on $\kk[1]\oplus\hh$ differentiates to a RUTH of its Lie algebra $\gg$; similarly, an AUTH of $\hh$ on $G\times\kk$ differentiates to a RUTH of $\hh$ on $\kk[1]\oplus\gg$. The defining equations of Definition~\ref{Def:LAMtchdPair}, in order, differentiate to Eq.'s \emph{(M1)-(M7)} in \cite[Definition 3.1]{MackJotzRaj}. Specifically, Eq.~\eqref{Eq:coreBra} implies Eq.~\emph{(M1)}; Eq.~\eqref{Eq:h-equiv} implies Eq.~\emph{(M2)}; Eq.~\eqref{Eq:G-equiv} implies Eq.~\emph{(M3)}; Eq.~\eqref{Eq:AlmostCommute} implies Eq.~\emph{(M4)}; Eq.~\eqref{Eq:AlmostFlat} implies Eq.~\emph{(M5)}; Eq.~\eqref{Eq:AlmostRepn} implies Eq.~\emph{(M6)}; and, lastly, Eq.~\eqref{Eq:MCEq} implies Eq.~\emph{(M7)}.    
\end{remark}

Definition~\ref{Def:LAMtchdPair} allows us to state the main result of this section, Theorem~\ref{Theo:MainEqceSuite}. This result extends Theorem~\ref{Theo:MainEqce} to LA-groups and \cite[Theorem 3.6]{MackJotzRaj}.
\begin{theorem}\label{Theo:MainEqceSuite}
Let $A\rightrightarrows\hh$ be an LA-group over $G$ with core $\kk$. Given a unit extending splitting of~\eqref{Eq:CoreSeq}, the induced RUTH $(\partial,\Delta^\hh,\Delta^\kk,\Omega)$ of $G$ on $\kk[1]\oplus\hh$ and the 4-tuple $(\rho_e,\alpha,\ell_e,\omega)$ form an LA-matched pair. Conversely, the VB-group $G\times(\kk\oplus\hh)\rightrightarrows\hh$ constructed using the RUTH $(\partial,\Delta^\hh,\Delta^\kk,\Omega)$ via Theorem~\ref{Theo:MainEqce} is an LA-group if and only one is given a 4-tuple $(\rho_e,\alpha,\ell_e,\omega)$, which together with the RUTH form an LA-matched pair.
\end{theorem}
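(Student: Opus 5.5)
The plan is to treat the two directions separately. In both, the earlier results of Section~\ref{sec:3} do most of the work: being an LA-group amounts to $s$, $t$, $u$ and $m$ being Lie algebroid morphisms on top of the VB-group structure of Theorem~\ref{Theo:MainEqce}. Propositions~\ref{Prop:Rho&RUTH}, \ref{Prop:anchorBrackets}, \ref{Prop:uMAP}, \ref{Prop:Target}, \ref{Prop:AllMult}, Lemma~\ref{lemma:Source} and Lemma~\ref{lemma:RInvBra} already translate these conditions into equations on the data $(\rho_e,\alpha,\ell,\omega)$ together with the RUTH. What remains is bookkeeping: show that this list of equations is equivalent to conditions 1--8 of Definition~\ref{Def:LAMtchdPair}, once $\ell$ is expressed through $\ell_e$ via Eq.~\eqref{Eq:ellsuite}.

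For the direct implication, I fix a unit extending splitting and check the eight conditions in turn.
\begin{itemize}
\item[(1)] By Lemma~\ref{lemma:RInvBra}, $\kk$ is a Lie algebra, and (M01) at $g=e$ gives Eq.~\eqref{Eq:coreBra}. Eq.~\eqref{Eq:RtoR} and Proposition~\ref{Prop:anchorBrackets} then identify $\ker(s)\cong G\times\kk$ with the Lie algebroid described in item 1.
\item[(2)] This is Example~\ref{Ex:LA-AUTH} together with Proposition~\ref{Prop:MainAUTH}, where $\ell_g$ is rewritten by (M02) at $(e,g)$.
\item[(3)] This is the first relation in~\eqref{Eq:TargetRels} at $g=e$, using $\Delta^\hh_e=\textnormal{Id}$.
\item[(4)] This is Eq.~\eqref{Eq:AnchorRUTH}.
\item[(5)] This combines the second relations in~\eqref{Eq:AnchorRels} and~\eqref{Eq:TargetRels}. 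These are exactly the $\XX(G)$- and $C(G,\hh)$-components of $R^{\nabla^\hh}$ evaluated on constant sections. I extend to general $(X,Y)$ by the Leibniz rule, noting that the curvature of a connection built from brackets is controlled by the Jacobi identity in $TG\times\hh$.
\item[(6)] This combines Eq.~\eqref{Eq:AnchorRUTH'} with the RUTH relation $\Delta^\hh_{gh}-\Delta^\hh_g\Delta^\hh_h=\partial\Omega$.
\item[(7)] This should follow from (M12) with $h=e$ (so $\Omega_{(g,e)}=0$), after expanding $\nabla$ via Eq.~\eqref{Eq:nabla}.
\item[(8)] This should follow from (M22), after rewriting $\ell_{gh}$ and the derivative terms of $\Omega$ in terms of the extended connection~\eqref{Eq:nabla2}.
\end{itemize}

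For the converse, I start from an LA-matched pair and define the algebroid structure on $A=G\times(\kk\oplus\hh)$ as the extension $\mathcal{A}=(G\times\kk)\times\hh$ produced by Proposition~\ref{Prop:MainAUTH} from the AUTH in item 2. This gives a Lie algebroid with anchor $(g;z,y)\mapsto\rho_e(z)+\alpha_g(y)$ and bracket~\eqref{Eq:Bra}, and $s=\textnormal{pr}_\hh$ is automatically a morphism. Since $\alpha_e=0=\omega_e$, Proposition~\ref{Prop:uMAP} makes $u$ a morphism. Proposition~\ref{Prop:Rho&RUTH}, with items 4 and 6, makes $\rho$ a groupoid homomorphism. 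Proposition~\ref{Prop:Target}, with item 3, item 5 and the fact that $\partial$ is a homomorphism (obtained from Eq.~\eqref{Eq:coreBra} and item 3), makes $t$ a morphism. Finally, Proposition~\ref{Prop:AllMult} reduces the multiplication to the relations (M01)--(M22), and I derive each of these from items 1, 7, 8 and the defining formula~\eqref{Eq:ellsuite}.

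I expect the main obstacle to be this last step: recovering the two-variable relations (M01), (M02), (M12) and (M22), which hold at arbitrary $(g,h)$, from conditions stated at the identity or in one variable. My first attempt will be to differentiate the RUTH cocycle identity for $\Omega$ along $(\rho_e(z),0)$ and $(0,\rho_e(z))$. Combined with Eq.~\eqref{Eq:ellsuite} applied at $gh$ and at $g$, this should propagate (M02) from $(e,g)$ to all $(g,h)$. I then plan to treat (M12) and (M22) the same way, by differentiating the relations $\Delta^\kk_{gh}-\Delta^\kk_g\Delta^\kk_h=\Omega\partial$ and $\delta^\Delta\Omega=0$. In the direct direction, the corresponding difficulty is checking that (M22) is really equivalent to Eq.~\eqref{Eq:MCEq}. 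I will organize that check by matching the terms $\Delta^\kk_g\omega_h-\omega_{gh}+\omega_g\circ\Delta^\hh_h$ with $\delta^\Delta\omega$, and the $\ell$- and $d\Omega$-terms with $\d_\nabla\Omega$ for the connection~\eqref{Eq:nabla2}.
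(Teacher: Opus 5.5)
Your plan is essentially the paper's proof. The forward direction reads the eight axioms off Propositions~\ref{Prop:Rho&RUTH}, \ref{Prop:anchorBrackets}, \ref{Prop:uMAP}, \ref{Prop:Target} and \ref{Prop:AllMult} much as you list them, and the converse builds the algebroid via Proposition~\ref{Prop:MainAUTH} and spreads the identity-level axioms over $G$ by combining \eqref{Eq:ellsuite} and \eqref{Eq:coreBra} with derivatives of the RUTH identities, which is exactly the role of Lemma~\ref{Lemma:ell}.

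Two bookkeeping remarks:

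\begin{itemize}
\item The same propagation is needed for the target map. Proposition~\ref{Prop:Target} requires the first relation of \eqref{Eq:TargetRels} at every $g$, while item 3 gives it only at $g=e$. To get the rest, apply item 3 to $\Delta^\hh_gy$, use \eqref{Eq:ellsuite}, and differentiate $\Delta^\hh_{kg}-\Delta^\hh_k\Delta^\hh_g=\partial\Omega_{(k,g)}$ in $k$ at $e$ along $\rho_e(z)$.
\item Your observation that \eqref{Eq:AlmostCommute} is only the $h=e$ slice of (M12) is correct and goes beyond the paper, which cites it as (M12) outright. The propagation to all $(g,h)$ does go through, using (M02), \eqref{Eq:ellsuite}, \eqref{Eq:G-equiv} and the derivative of $\delta^\Delta\Omega=0$ in the middle slot. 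It only closes, however, after you recompute (M12) from \eqref{Eq:Bra} and \eqref{Eq:StrMult}. That direct computation reproduces (M01) and (M02) as displayed, but gives the opposite signs for the $[\Omega_{(g,h)}(y),\Delta^\kk_gz]$ and $d\Omega$ terms of (M12), and hence for the $d\Omega$ term of \eqref{Eq:AlmostCommute}.
\end{itemize}
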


Since we are interested in LA-groups, where by definition, each of the structural maps is a Lie algebroid morphism, 
the statement of Theorem~\ref{Theo:MainEqceSuite} needs all of the axioms in Definition~\ref{Def:LAMtchdPair}. 
However, one could be more specific and point out which axiom implies that a particular map is a Lie algebroid morphism. 
We postpone this level of specificity to the proof, which will follow from the following technical lemma along with the results of Section~\ref{sec:3}.

\begin{lemma}\label{Lemma:ell}
    If $(\partial,\Delta^\hh,\Delta^\kk,\Omega)$ and $(\rho_e,\alpha,\ell_e,\omega)$ form an LA-matched pair, then Eq.'s~\eqref{Eq:AnchorRels},~\eqref{Eq:TargetRels} as well as Eq.'s~$\textnormal{(M01)}$,~$\textnormal{(M02)}$, and~$\textnormal{(M11)}$ in Proposition~\ref{Prop:AllMult} hold.
\end{lemma}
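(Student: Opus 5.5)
The approach is to derive each target identity from the axioms of Definition~\ref{Def:LAMtchdPair}. The basic mechanism is the one already used in Section~\ref{sec:3}: a bracket of constant sections is computed pointwise and then compared with the defining equations. Throughout, $\ell_g$ means the tensor defined by Eq.~\eqref{Eq:ellsuite} and $\nabla$ means the connection of Eq.~\eqref{Eq:nabla}. The order of work is: first the anchor relations~\eqref{Eq:AnchorRels}, then (M01) and (M02), then the target relations~\eqref{Eq:TargetRels}, and finally (M11).

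\textbf{Anchor relations.} These should come from the AUTH axioms (item 2). Apply Eq.~\eqref{Eq:AUTHii-Equivce} to the constant section $z^R$, i.e.\ the constant function $z$. Then $\nabla_yz=\ell^yz$, because the derivative term in~\eqref{Eq:nabla} vanishes, and $\rho(z)$ is the right-invariant field $\rho_e(z)^G$. Computing $[\alpha(y),\rho_e(z)^G]$ in the right trivialization gives $[\alpha_g(y),\rho_e(z)]-(d_g\alpha)_{\rho_e(z)^G_g}(y)$, up to sign conventions for brackets of right-invariant fields. This is the first line of~\eqref{Eq:AnchorRels}. The second line is exactly Remark~\ref{Rmk:AUTH}, namely $R^\alpha=\rho\circ\omega$, written in the right trivialization; the $d\alpha$ terms appear when $[\alpha(y_0),\alpha(y_1)]$ is expanded for non-constant $\alpha$.

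\textbf{(M01) and (M02).} (M02) is a cocycle-type consequence of the definition~\eqref{Eq:ellsuite}. Write both $\ell_{gh}^y$ and $\ell_g^{\Delta^\hh_hy}$ through $\ell_e$. Then differentiate the cocycle equation for $\Omega$ (the RUTH relation at $(e,g,h)$) in the first slot along $\rho_e(z)$, and use $\Delta^\hh_{gh}-\Delta^\hh_g\Delta^\hh_h=\partial\Omega$. For (M01), combine~\eqref{Eq:ellsuite} with the definition of the bracket~\eqref{Eq:coreBra}. Differentiating $\Delta^\kk_{gh}-\Delta^\kk_g\Delta^\kk_h=\Omega_{(g,h)}\partial$ at $h=e$ along $\rho_e(z_1)$ should convert $(d_e\Delta^\kk)$ at $e$ into $(d_g\Delta^\kk)$ together with $\Omega$-derivative terms, and these should cancel against the $\Omega$-term in~\eqref{Eq:ellsuite} evaluated at $y=\partial z_0$. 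Axiom 4 (Eq.~\eqref{Eq:G-equiv}) will be needed to commute $\rho_e$ past $\Delta^\kk$.

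\textbf{Target relations.} These follow from axioms 3 and 6 in the same manner. Eq.~\eqref{Eq:h-equiv} gives the first line of~\eqref{Eq:TargetRels} at $g=e$. Transporting it to arbitrary $g$ uses~\eqref{Eq:ellsuite} and the $\hh$-component of~\eqref{Eq:AlmostRepn} differentiated. The second line is the $\hh$-component of the almost-flatness condition~\eqref{Eq:AlmostFlat}. Finally, $\partial$ being a Lie algebra homomorphism follows from~\eqref{Eq:coreBra} combined with~\eqref{Eq:h-equiv} at $y=\partial z_0$, together with $\partial\Delta^\kk=\Delta^\hh\partial$ differentiated at $e$.

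\textbf{(M11), the expected main obstacle.} This is an identity about $\omega_g$ evaluated on $\partial$-images. The plan is to use axiom 7, Eq.~\eqref{Eq:AlmostCommute}, with $y=\partial z_0$, and rewrite $\nabla_{\partial z_0}$ via~\eqref{Eq:nabla} and the already-established (M01). This should produce $[\Delta^\kk_gz_0,\Delta^\kk_gz_1]-\Delta^\kk_g[z_0,z_1]$ plus derivative terms. The remaining difficulty is to match those derivative terms with $(d_g\Delta^\kk)_{\rho_g(\partial z^\sigma)}$. For this, expand $\rho_g(\partial z^\sigma)=\alpha(\partial z)_g$ using Eq.~\eqref{Eq:G-equiv}, and absorb the $d\Omega$ term with the RUTH relation $\Omega\partial=\Delta^\kk_{gh}-\Delta^\kk_g\Delta^\kk_h$ differentiated. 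I expect the bookkeeping of these derivative terms to be the delicate point; if direct matching fails, the fallback is to verify (M11) at $g=e$ and propagate it using the group-law identities.
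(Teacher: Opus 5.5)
Your plan is sound. For \eqref{Eq:TargetRels}, (M01), (M02) and (M11) it is the paper's own: pass from $\ell_g$ to $\ell_e$ with \eqref{Eq:ellsuite}, use \eqref{Eq:h-equiv} or \eqref{Eq:coreBra} at the identity, and absorb what remains with derivatives of the RUTH relations. For (M11) the paper uses the same ingredients you list: \eqref{Eq:AlmostCommute} at $y=\partial z_0$, an (M01)-type rewriting of $\ell_g^{\partial z_0}(\Delta^\kk_gz_1)$, \eqref{Eq:coreBra}, \eqref{Eq:G-equiv}, and derivatives of $\Delta^\kk_{gh}-\Delta^\kk_g\Delta^\kk_h=\Omega_{(g,h)}\circ\partial$ in both slots.

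Some bookkeeping needs fixing.
In (M01), after $\Delta^\hh_g\partial=\partial\Delta^\kk_g$ and \eqref{Eq:coreBra}, the term to convert is $(d_e\Delta^\kk)_{\rho_e(z_1)}(\Delta^\kk_gz_0)$. So you must differentiate in the \emph{first} slot, at $(e,g)$ along $(\rho_e(z_1),0_g)$. This gives the right-invariant direction $\rho_e(z_1)^G_g$ and exactly the term $(d_{(e,g)}\Omega)_{(\rho_e(z_1),0_g)}(\partial z_0)$ of \eqref{Eq:ellsuite}. Differentiating at $h=e$ instead produces $\Delta^\kk_g\circ(d_e\Delta^\kk)_{\rho_e(z_1)}$, a left-invariant direction and a $(d_{(g,e)}\Omega)$-term, and nothing cancels.
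Also, \eqref{Eq:G-equiv} plays no role in (M01), since only $\rho_e(z_1)$ occurs; it enters in (M11).
In (M02) you need \eqref{Eq:coreBra} to rewrite $\ell_e^{\partial\Omega_{(g,h)}(y)}(z)$ as $[\Omega_{(g,h)}(y),z]-(d_e\Delta^\kk)_{\rho_e(z)}(\Omega_{(g,h)}(y))$. The differentiated cocycle equation turns this last term, together with the two $\Omega$-derivatives from \eqref{Eq:ellsuite}, into $-(d_{(g,h)}\Omega)_{(\rho_e(z)^G_g,0_h)}(y)$.

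For (M11) no fallback is needed, but one sign decides the matter. From \eqref{Eq:G-equiv}, $\rho_e(\Delta^\kk_gz_1)^G_g=d_eL_g\rho_e(z_1)+\alpha(\partial z_1)_g$, and the $\alpha$-part gives the last term of (M11). The second-slot derivative turns $\Delta^\kk_g(d_e\Delta^\kk)_{\rho_e(z_1)}(z_0)-(d_g\Delta^\kk)_{d_eL_g\rho_e(z_1)}(z_0)$ into $-(d_{(g,e)}\Omega)_{(0_g,\rho_e(z_1))}(\partial z_0)$, which must cancel the $\Omega$-term of \eqref{Eq:AlmostCommute}. It cancels when that term reads $-(d_{(g,e)}\Omega)_{(0_g,\rho_e(z))}(y)$. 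That is what the $y$-dependent part of $m^![(0,z_0,y),(0,z_1,0)]=[m^!(0,z_0,y),m^!(0,z_1,0)]$ gives at $h=e$. With the sign as displayed, the two terms add instead. The paper's argument relies on this sign in exactly the same way.

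Your treatment of \eqref{Eq:AnchorRels} is a genuinely different route.
The paper takes the identity-level relation \eqref{Eq:AnchorRel1@e} and transports it to $g$ using \eqref{Eq:ellsuite} and the first-slot derivative of the $\gg$-component of \eqref{Eq:AlmostRepn}. It leaves the second line implicit.
You read the first line directly off the anchor equivariance \eqref{Eq:AUTHii-Equivce}, applied to $z^R$; item 2 of the definition grants this equivariance over all of $G\times\kk$. You read the second line off $R^\alpha=\rho\circ\omega$.
Your version is shorter and covers both lines. The paper's version shows that equivariance at every $g$ is already forced by equivariance at $e$ together with \eqref{Eq:AlmostRepn}, in keeping with the idea that the matched pair is governed by identity-level data.
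One caveat: if $\kk=(0)$, the argument of Remark~\ref{Rmk:AUTH} gives nothing, and the second line should instead be read off the $\XX(G)$-component of \eqref{Eq:AlmostFlat}.
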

\begin{proof}
    Let $y\in\hh$, $z,z_0,z_1\in\kk$ and $g,h\in G$ throughout. We start with Eq.~\eqref{Eq:AnchorRels}; 
    using Eq.~\eqref{Eq:ellsuite}, we compute
    \begin{align*}
        \rho_e(\ell_g^yz) & =\rho_e\big(\ell_{e}^{\Delta^\hh_g y}(z)-(d_{(e,g)}\Omega)_{(\rho_e(z),0_g)}(y)\big) \\
                & = -(d_e\alpha)_{\rho_e(z)}(\Delta^\hh_gy)-\rho_e\big((d_{(e,g)}\Omega)_{(\rho_e(z),0_g)}(y)\big).
    \end{align*}
    Here, the second equality follows from Eq.~\eqref{Eq:AnchorRel1@e}. 
    Now, Eq.~\eqref{Eq:AnchorRels} follows from differentiating the $\gg$-coordinate in Eq.~\eqref{Eq:AlmostRepn} and evaluating at $(\rho_e(z_1),0_g)\in\gg\oplus T_gG$.

    \noindent To deduce Eq.~\eqref{Eq:TargetRels}, we use Eq.~\eqref{Eq:ellsuite} again and compute
    \begin{align*}
        \partial(\ell_g^yz) & =\partial\big(\ell_{e}^{\Delta^\hh_g y}(z)-(d_{(e,g)}\Omega)_{(\rho_e(z),0_g)}(y)\big) \\
                & = [\Delta^\hh_gy,z]^0-(d_e\Delta^\hh)_{\rho(z)}(\Delta^\hh_gy)-\rho_e\big((d_{(e,g)}\Omega)_{(\rho_e(z),0_g)}(y)\big).
    \end{align*}
    Here, the second equality follows from Eq.~\eqref{Eq:h-equiv}, 
    and Eq.~\eqref{Eq:TargetRels} follows from differentiating $\Delta^\hh_{gh}-\Delta^\hh_g\circ\Delta^\hh_h=\partial\circ\Omega_{(g,h)}$ and evaluating at $(\rho_e(z_1),0_g)\in\gg\oplus T_gG$.

    \noindent To deduce Eq.~$\textnormal{(M01)}$ in Proposition~\ref{Prop:AllMult}, we use Eq.~\eqref{Eq:ellsuite} and compute
    \begin{align*}
        \ell_g^{\partial z_0}(z_1) & =\ell_e^{\Delta^\hh_g\partial z_0}(z_1)-(d_{(e,g)}\Omega)_{(\rho(z_1),0)}(\partial z_0) \\
            & =[\Delta^\kk_g z_0,z_1]-(d_e\Delta^\kk)_{\rho(z_1)}(z_0)-(d_{(e,g)}\Omega)_{(\rho(z_1),0)}(\partial z_0).
    \end{align*}
    Here, the second equality follows from $\Delta^\hh\circ\partial=\partial\circ\Delta^\kk$ and Eq.~\eqref{Eq:coreBra}. 
    Eq.~$\textnormal{(M01)}$ now follows from differentiating $\Delta^\kk_{gh}-\Delta^\kk_g\circ\Delta^\kk_h=\Omega_{(g,h)}\circ\partial$ and evaluating at $(\rho_e(z_1),0_g)\in\gg\oplus T_gG$.
    
    \noindent To deduce Eq.~$\textnormal{(M02)}$ in Proposition~\ref{Prop:AllMult}, we use Eq.~\eqref{Eq:ellsuite} once more and compute
    \begin{eqnarray*}
    \resizebox{1\hsize}{!}{$ \big(\ell_{gh}^y-\ell_g^{\Delta^\hh_hy}\big)(z)=\big(\ell_{e}^{\Delta^\hh_{gh}y-\Delta^\hh_g\Delta^\hh_hy}\big)(z)-(d_{(e,gh)}\Omega)_{(\rho(z),0)}(y)+(d_{(e,g)}\Omega)_{(\rho(z),0)}(\Delta^\hh_hy).$}
    \end{eqnarray*}
    Since $\Delta^\hh_{gh}-\Delta^\hh_g\Delta^\hh_h=\partial\Omega_{(g,h)}$, we use Eq.~\eqref{Eq:coreBra} and deduce
    \begin{eqnarray*}
        \ell_{e}^{\partial\Omega_{(g,h)}(y)}(z)=[\Omega_{(g,h)}(y),z]-(d_{e}\Delta^\kk)_{\rho(z)}(\Omega_{(g,h)}(y)).
    \end{eqnarray*}
        Eq.~$\textnormal{(M02)}$ now follows from differentiating the cocycle equation $\delta^\Delta\Omega=0$, and evaluating at $(\rho_e(z),0_g,0_h)\in\gg\oplus T_gG\oplus T_hG$. 

    \noindent Lastly, to deduce Eq.~$\textnormal{(M11)}$ in Proposition~\ref{Prop:AllMult}, we use Eq.~\eqref{Eq:coreBra} and compute
    \begin{align*}
        \Delta^\kk_g[z_0,z_1] & =\Delta^\kk_g\big(\ell_e^{\partial z_0}(z_1)+(d_e\Delta^\kk)_{\rho(z_1)}(z_0)\big),
    \end{align*}
    and 
    \begin{align*}
        [\Delta^\kk_gz_0,\Delta^\kk_gz_1] & =\ell_e^{\partial\Delta^\kk_gz_0}(\Delta^\kk_gz_1)+(d_e\Delta^\kk)_{\rho(\Delta^\kk_gz_1)}(\Delta^\kk_gz_0).
    \end{align*}
    As a consequence of Eq.~\eqref{Eq:AlmostCommute}, 
    \begin{eqnarray*} 
    \resizebox{1\hsize}{!}{$\Delta^\kk_g\big(\ell_e^{\partial z_0}(z_1)\big)=\ell_e^{\partial z_0}(\Delta^\kk_gz_1)-\omega_g(\partial z_0,\partial z_1)+(d_g\Delta^\kk)_{\alpha(\partial z_0)}(z_1)-(d_{(g,e)}\Omega)_{(\alpha(\partial z_1),\rho(z_1))}(\partial z_0);$}
    \end{eqnarray*}
    whereas, Eq.~$\textnormal{(M02)}$ in Proposition~\ref{Prop:AllMult} implies
    \begin{align*} 
    \ell_e^{\Delta^\hh\partial z_0}(\Delta^\kk_gz_1) & =\ell_g^{\partial z_0}(\Delta^\kk_gz_1)+(d_{(e,g)}\Omega)_{(\rho(\Delta^\kk_gz_1),0)}(\partial z_0).
    \end{align*}
    Eq.~$\textnormal{(M11)}$ thus follows from $\Delta^\hh\circ\partial=\partial\circ\Delta^\kk$, Eq.~\eqref{Eq:G-equiv} 
    and the differential of $\Delta^\kk_{gh}-\Delta^\kk_g\circ\Delta^\kk_h=\Omega_{(g,h)}\circ\partial$ evaluated at $(\rho_e(\Delta^\kk_g z_1),0_g)\in\gg\oplus T_gG$ 
    and at $(0_g,\rho_e(z_1))\in  T_gG\oplus\gg$.
    \end{proof}

    \begin{proof}[Proof of Theorem~\ref{Theo:MainEqceSuite}]
    Let $A\rightrightarrows\hh$ be a VB-group over $G$ with core $\kk$ and let $\sigma$ be a unit extending splitting of~\eqref{Eq:CoreSeq}. Then, Theorem~\ref{Theo:MainEqce} implies $(\partial,\Delta^\hh,\Delta^\kk,\Omega)$ is a RUTH of $G$ on $\kk[1]\oplus\hh$. If $A\rightrightarrows\hh$ is an LA-group, the source map $s:A\to\hh$ is a Lie algebroid morphism and Proposition~\ref{Prop:MainAUTH} implies that $(\alpha,\Delta,\omega)$ is an AUTH of $\hh$ on $\ker(s)$. Furthermore, as the unit map $u:\hh\to A$ is a Lie algebroid morphism, Proposition~\ref{Prop:uMAP} implies that $\alpha$ and $\omega$ vanish identically at the identity. Since the multiplication $m:A^{(2)}\to A$ is a Lie algebroid morphism as well, Eq.'s~$\textnormal{(M01)}$ and $\textnormal{(M02)}$ in Proposition~\ref{Prop:AllMult} hold and in particular evaluating at $e$ and $(e,g)$, respectively, yields Eq.'s~\eqref{Eq:coreBra} and~\eqref{Eq:ellsuite}. To check that $(\partial,\Delta^\hh,\Delta^\kk,\Omega)$ and $(\rho_e,\alpha,\ell_e,\omega)$ form an LA-matched pair it remains to see that the axioms $(iii)$-$(viii)$ in Definition~\ref{Def:LAMtchdPair} hold. Indeed, Eq.~\eqref{Eq:h-equiv} follows from Proposition~\ref{Prop:Target} as the target map $t:A\to\hh$ is a Lie algebroid morphism; Eq.~\eqref{Eq:G-equiv} follows from Proposition~\ref{Prop:Rho&RUTH} as the multiplication map $m$ is anchored; Eq.~\eqref{Eq:AlmostFlat} follows from Proposition~\ref{Prop:anchorBrackets} as the anchor map $\rho:A\to TG$ induces a Lie algebra homomorphism at the level of sections; Eq.~\eqref{Eq:AlmostRepn} follows from Proposition~\ref{Prop:Rho&RUTH} as well; and lastly, Eq.'s~\eqref{Eq:AlmostCommute} and~\eqref{Eq:MCEq} are respectively Eq.'s~$\textnormal{(M12)}$ and~$\textnormal{(M22)}$ in Proposition~\ref{Prop:AllMult}. 
    
    \noindent Conversely, given an LA-matched pair, Theorem~\ref{Theo:MainEqce} gives a VB-group $G\times(\kk\oplus\hh)\rightrightarrows\hh$ and Proposition~\ref{Prop:MainAUTH} endows $G\times(\kk\oplus\hh)$ with a Lie algeboroid structure for which the source projection is a Lie algebroid morphism. The unit embedding is a Lie algebroid morphism by Proposition~\ref{Prop:uMAP}. Using Eq.'s~\eqref{Eq:coreBra},~\eqref{Eq:h-equiv} and the differential of the defining equation $\Delta^\hh\circ\partial=\partial\circ\Delta^\kk$ evaluated at $\rho_e(z_1)\in\gg$, one sees that $\partial:\kk\to\hh$ defines a Lie algebra homomorphism; therefore, by Proposition~\ref{Prop:Target} and Lemma~\ref{Lemma:ell}, that the target is a Lie algebroid morphism follows from Eq.'s~\ref{Eq:h-equiv} and~\ref{Eq:AlmostFlat}. Using Eq.'s~\eqref{Eq:coreBra},~\eqref{Eq:AnchorRel1@e} and the differential of Eq.~\eqref{Eq:G-equiv} evaluated at $\rho_e(z_1)\in\gg$, one sees that $\rho_e:\kk\to\gg$ defines a Lie algebra homomorphism; hence, invoking Proposition~\ref{Prop:Rho&RUTH}, one concludes that the anchor is a Lie groupoid homomorphism as a consequence of Eq.'s~\eqref{Eq:G-equiv} and~\eqref{Eq:AlmostRepn}. Lastly, by Proposition~\ref{Prop:AllMult} and Lemma~\ref{Lemma:ell}, that the multiplication is a Lie algebroid morphism follows from Eq.'s~\eqref{Eq:coreBra},~\eqref{Eq:ellsuite},~\eqref{Eq:AlmostCommute} and~\eqref{Eq:MCEq}.
    \end{proof}

\subsection{The Lie algebroid structure}\label{ssec-LAStr}
In this section, we study the structure of the Lie algebroid of arrows in an LA-group $A\rightrightarrows\hh$ using the AUTH $(\alpha,\nabla,\omega)$ of $\hh$ on $\ker(s)$ induced by a splitting $\sigma$ of~\eqref{Eq:CoreSeq}. The results in this section parallel those in Subsection~\ref{ssec:2} in the sense of Remarks~\ref{Rmk:CoreDiagram} and~\ref{Rmk:AUTH}. As we explain in Section~\ref{sec:1}, right multiplication yields an isomorphism $\ker(s)\cong G\times\kk$, where $\kk$ is the core. In order, one obtains a representation-like operator $\ell_e:\hh\to\kk^*\otimes\kk$ by acting on constant sections and evaluating at the identity; in symbols, given $y\in\hh$ and $z\in\kk$,
$$\ell_e^y(z):=(\nabla_y(z^R))_e=[y^\sigma,z^R]^1_e.$$
Eq.~\eqref{Eq:ellRep@e} implies $\ell_e$ fails in general to define a representation of $\hh$ on $\kk$, as $d_e\omega:\gg\to\wedge^2\hh^*\otimes\kk$ does not always vanish on $\ll=\rho_e(\kk)\leq\gg$. Similarly, given $y\in\hh$, a sufficient condition for the endomorphism $\ell_e^y$ to be a derivation of $\kk$ is that $d_e\ell:\gg\to\hh^*\otimes\kk^*\otimes\kk$ vanishes at $\ll\leq\gg$. This hints at the fact that, in general, $\partial:\kk\longrightarrow\hh$ and $\ell$ do not assemble into a crossed module of Lie algebras. Instead, we have the following result.

\begin{prop}\label{Prop:IsotropyLie2Alg}    
If $A\rightrightarrows\hh$ is an LA-group, the restriction $\gg_e(A)\rightrightarrows\hh$ is a strict Lie 2-algebra (in the sense of \cite{Lie2Alg}). 
\end{prop}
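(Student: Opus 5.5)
The plan is to show that $\gg_e(A)\rightrightarrows\hh$ is a Lie groupoid internal to Lie algebras, i.e.\ a strict Lie 2-algebra in the sense of \cite{Lie2Alg}, equivalently a crossed module of Lie algebras. First, I will check that $\gg_e(A)$ is closed under all the groupoid structure maps of $A\rightrightarrows\hh$. The fibre over the identity, $A_e$, is a sub-groupoid of $A\rightrightarrows\hh$, since $e\cdot e=e$ and $e^{-1}=e$. Because the anchor is a Lie groupoid homomorphism $A\to TG$ (Proposition~\ref{Prop:Rho&RUTH}), the set $\gg_e(A)=\ker(\rho_e)$ is the preimage of the zero arrow $0_e\in T_eG$. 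That zero arrow is a unit in $TG\rightrightarrows\gg$, so $\gg_e(A)$ is the kernel of a groupoid morphism into the units. It is therefore closed under multiplication and inversion, and it contains $u(\hh)$ by Lemma~\ref{lemma:uAnchord}. Its source and target maps are the restrictions of $s_e,t_e$ to the linear space $\gg_e(A)$. Hence $\gg_e(A)\rightrightarrows\hh$ is a VB-groupoid over a point, i.e.\ a 2-vector space.

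Second, I will establish the Lie algebra structures. By Corollary~\ref{Cor:IsoAt e}, $\gg_e(A)$ is a Lie algebra and $\gg_e(A)\cong\mathfrak{k}\rtimes\hh$ with $\mathfrak{k}=\ker(\rho_e|_\kk)$. The maps $s_e$ and $u$ are Lie algebra homomorphisms by Corollary~\ref{Cor:IsoAt e} and Lemma~\ref{lemma:uBracket}. For $t_e$, note that $t$ is a Lie algebroid morphism covering the constant map. Restricted to the isotropy at $e$, this gives a Lie algebra homomorphism $\gg_e(A)\to\hh$, just as for $u$ and $s$ in Section~\ref{sss-unit}, using $\bar{e}^!(\bar{\ast}^!\hh)$, which has isotropy $\hh$. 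Equivalently, one uses Proposition~\ref{Prop:Target} with $\alpha_e=0$, $\omega_e=0$ and the fact that $d_e\Delta^\hh$ is evaluated only on $\rho_e(\mathfrak{k})=0$.

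The main step, and the expected obstacle, is showing that multiplication $\gg_e(A)\times_\hh\gg_e(A)\to\gg_e(A)$ is a Lie algebra homomorphism. I will take two approaches. First, the abstract route: $m$ is a Lie algebroid morphism covering $m_G$. The pair $(e,e)$ maps to $e$, and composable pairs in $\gg_e(A)^{(2)}$ lie in the isotropy of $A^{(2)}$ at $(e,e)$. A Lie algebroid morphism restricts to a Lie algebra map between isotropies at related points whenever the map respects anchors, which it does by Proposition~\ref{Prop:Rho&RUTH}. The delicate part is identifying the isotropy bracket of $A^{(2)}$ with the restricted product bracket, which requires $s$ and $t$ to be morphisms. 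Second, as a check, I will use the explicit formula~\eqref{Eq:StrMult} at $g=h=e$, which gives $m(e,e;z_0,z_1,y)=(e;z_0+z_1,y)$ since $\Delta^\kk_e=\mathrm{Id}$ and $\Omega_{(e,e)}=0$. With bracket~\eqref{Eq:Bra} at $e$, which reduces to $[z_0,z_1]+\ell_e^{y_0}z_1-\ell_e^{y_1}z_0$, I then need to verify that the map $(z_0,z_1,y)\mapsto(z_0+z_1,y)$ is a homomorphism on $\mathfrak{k}^2\oplus\hh$. Here the cross terms $[z_0,z_1']$ must be handled using Eq.~(M01) at $g=e$, namely $\ell_e^{\partial z_0}(z_1)=[z_0,z_1]-(d_e\Delta^\kk)_{\rho_e(z_1)}(z_0)$, where the last term vanishes for $z_1\in\mathfrak{k}$. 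Together with Eq.~(M12) at $g=h=e$, this should produce exactly the cross terms of the product bracket on composable pairs.

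Finally, I will invoke the equivalence in \cite{Lie2Alg} between Lie groupoids internal to Lie algebras and crossed modules. This yields the crossed module $\partial|_{\mathfrak{k}}:\mathfrak{k}\to\hh$ with action $\ell_e$ restricted to $\mathfrak{k}$. As a consistency check, I will verify directly from~\eqref{Eq:ellRep@e} and~\eqref{Eq:h-equiv} that this is a genuine representation by derivations, since the correction terms $d_e\omega$ and $d_e\Delta^\hh$ are evaluated on $\rho_e(\mathfrak{k})=0$.
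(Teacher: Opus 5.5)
Your proposal is correct. After the first step, which you share with the paper, you argue from the other side of the equivalence between strict Lie 2-algebras and crossed modules.

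Both you and the paper start by noting that the anchor is a groupoid morphism into the tangent group. (That group is $TG\rightrightarrows\{0\}$, not a groupoid over $\gg$; $0_e$ is its identity.) Hence $\gg_e(A)=\rho^{-1}(0_e)$ is a wide subgroupoid.

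The paper never checks directly that composition is a Lie algebra map. It writes down the candidate crossed module $t_e|_{\mathfrak{k}}:\mathfrak{k}\to\hh$ with $\Lie_yz=[u(y),z]_{\gg_e(A)}=\ell_e^y(z)$, and verifies its axioms one by one:
\begin{itemize}
\item the representation property from \eqref{Eq:ellRep@e} restricted to $\mathfrak{k}$;
\item the derivation property from the Jacobi identity in $\gg_e(A)$;
\item equivariance from \eqref{Eq:h-equiv};
\item the Peiffer identity from \eqref{Eq:coreBra}.
\end{itemize}
It leaves implicit that $\gg_e(A)\rightrightarrows\hh$ is exactly the groupoid built from that crossed module, which holds because $\omega_e=0$ and $m(e,e;z_0,z_1,y)=(e;z_0+z_1,y)$.

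You instead verify the internal-category definition directly, and your key step comes essentially for free. A Lie algebroid morphism restricts to a Lie algebra morphism between isotropy algebras at related points. The isotropy of $A^{(2)}\le A\times A$ at $(e,e)$ is $\gg_e(A)\times_\hh\gg_e(A)$ with the restricted product bracket. So $m$ restricts to a homomorphism.

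This argument is splitting-free and avoids all the formulas of Section~\ref{sec:3}. Your explicit check shows where the two routes meet: the only nontrivial cross-term identity is the Peiffer identity $[z,w]=\ell_e^{\partial z}(w)$ for $w\in\mathfrak{k}$, i.e.\ \eqref{Eq:coreBra}. The paper's route has the advantage of producing the crossed module explicitly, which is what is used later (Proposition~\ref{Prop:ALgbdIsotropies}, Theorem~\ref{Theo:Btrfls}). You only recover it through the equivalence.

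Two harmless inaccuracies:
\begin{itemize}
\item (M12) at $g=h=e$ is vacuous, so it plays no role. It reduces to $(d_{(e,e)}\Omega)_{(0,\rho_e(z))}(y)=0$, which is automatic since $\Omega$ vanishes on $\{e\}\times G$.
\item \eqref{Eq:h-equiv} yields equivariance of $\partial$, not the derivation property. The derivation property comes from the Jacobi identity in $\gg_e(A)$.
\end{itemize}
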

\begin{proof}
    $\gg_e(A)$ is by definition a Lie algebra. Moreover, if $(a_0,a_1)\in\gg_e(A)\times_\hh\gg_e(A)$, $\rho_e(a_0\Join a_1)=\rho_e(a_0)\ast\rho_e(a_1)=0$; therefore, $\gg_e(A)$ defines a subgroupoid of $A\rightrightarrows\hh$. Proposition~\ref{Prop:uMAP} implies it is full. To prove $\gg_e(A)\rightrightarrows\hh$ is a Lie 2-algebra, we let $\mathfrak{k}=\kk\cap\gg_e(A)$ (cf. Lemma~\ref{lemma:uAnchord} and Corollary~\ref{Cor:IsoAt e}) and show that the restriction of the target map $t_e:\mathfrak{k}\longrightarrow\hh$ together with the action $\Lie:\hh\longrightarrow\ggl(\mathfrak{k})$ given by $\Lie_yz:=[u(y),z]_{\gg_e(A)}$ for $y\in\hh$ and $z\in\mathfrak{k}$ defines a crossed module of Lie algebras.

    \noindent First, combining Corollary~\ref{Cor:IsoAt e} and Proposition~\ref{Prop:Target}, it follows that $t_e:\mathfrak{k}\longrightarrow\hh$ is a Lie algebra homomorphism. Given that the Lie bracket $[u(y),z]_{\gg_e(A)}$ is computed using any extension of $u(y)$ and $z$, let $\sigma$ be a unit extending splitting of the core sequence~\eqref{Eq:CoreSeq} and compute $$\Lie_yz=[y^\sigma,z^R]^1_e=\ell_e^y(z).$$
    Since $z\in\mathfrak{k}=\ker(\rho_e)\cap\kk$, it follows from Eq.~\eqref{Eq:ellRep@e} that $\Lie$ indeed defines a representation of $\hh$ on $\mathfrak{k}$. In order, by definition, $\hh$ acts by derivations of $\mathfrak{k}$. The equivariance of the structural map and the infinitesimal Peiffer identity follow from Eq.'s~\eqref{Eq:h-equiv} and~\eqref{Eq:coreBra}, respectively.  
\end{proof}

The fact that $\ell_e$ restricts to a honest action of $\hh$ on $\mathfrak{k}$ parallels Lemma~\ref{lemma:Dela1Indep}; 
indeed, notice that in the proof of Proposition~\ref{Prop:IsotropyLie2Alg}, other than it being unit extending, the splitting was chosen arbitrarily. 
The next results are parallel to Lemma~\ref{lemma:Dela0Indep} and Proposition~\ref{Prop:GpdIsotropies}.
\begin{lemma}\label{lemma:AlphaBarIndep}
    The map $\bar{\alpha}:G\longrightarrow\hh^*\otimes\gg/\ll$ defined by the point-wise composition of $\alpha$ with the quotient map onto $\gg/\ll$ is independent of the splitting. Moreover, for each $g\in G$, $\ker(\bar{\alpha}_g)$ defines a Lie subalgebra of $\hh$.
\end{lemma}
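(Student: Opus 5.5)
For the independence of $\bar\alpha$ we mimic the proof of Lemma~\ref{lemma:Dela0Indep}. Let $\sigma^0,\sigma^1$ be two splittings of~\eqref{Eq:CoreSeq}. For $g\in G$ and $y\in\hh$, the difference $\sigma^1_g(y)-\sigma^0_g(y)$ lies in $\ker(s_g)$, so there is a unique $z\in\kk$ with $\sigma^1_g(y)-\sigma^0_g(y)=z\Join 0_g=z^R_g$. Applying the anchor, right-translating by $g^{-1}$, and using Eq.~\eqref{Eq:RtoR}, we get
$$\alpha^1_g(y)-\alpha^0_g(y)=\rho_g(z\Join0_g)\ast0_{g^{-1}}=\rho_e(z)\in\ll .$$
Hence the two maps agree modulo $\ll=\rho_e(\kk)$, which is a subalgebra by Corollary~\ref{Cor:lsubalg}. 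Therefore $\bar\alpha$ is well defined and does not depend on the splitting. Whether the splitting extends the unit plays no role here.

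For the second claim, fix $g$ and take $y_0,y_1\in\ker(\bar\alpha_g)$, so that $\alpha_g(y_i)=\rho_e(z_i)$ for some $z_i\in\kk$. We must show $\alpha_g([y_0,y_1]^0)\in\ll$. The tool is the second relation in~\eqref{Eq:AnchorRels} of Proposition~\ref{Prop:anchorBrackets}:
$$\alpha_g([y_0,y_1]^0)=[\alpha_g(y_0),\alpha_g(y_1)]-\rho_e(\omega_g(y_0,y_1))-(d_g\alpha)_{\rho_g(y_1^\sigma)}(y_0)+(d_g\alpha)_{\rho_g(y_0^\sigma)}(y_1).$$
The bracket term equals $\rho_e([z_0,z_1])$ because $\rho_e$ is a Lie algebra homomorphism, so it lies in $\ll$. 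The $\omega$-term also lies in $\ll$.

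The main obstacle is the pair of derivative terms. The key observation is that $\rho_g(y_i^\sigma)$ corresponds to $\alpha_g(y_i)\ast0_g=\rho_e(z_i)^G_g=\rho_g(z_i^R)$. So each derivative term has the form $(d_g\alpha)_{\rho_g(z^R_g)}(y)$, and the first relation in~\eqref{Eq:AnchorRels} rewrites it as
$$[\alpha_g(y),\rho_e(z)]-\rho_e(\ell^y_gz).$$
For $y=y_0$ and $z=z_1$, we have $[\alpha_g(y_0),\rho_e(z_1)]=[\rho_e(z_0),\rho_e(z_1)]\in\ll$, and the $\ell$-term lies in $\ll$ as well. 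The other derivative term is handled symmetrically. Summing the four contributions gives $\alpha_g([y_0,y_1]^0)\in\ll$, so $[y_0,y_1]^0\in\ker(\bar\alpha_g)$.

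Alternatively, one could change the splitting near $g$ so that $\alpha_g(y_i)=0$. That would use the independence just proved, but it is not needed for the direct argument above. A point to double-check is that Proposition~\ref{Prop:anchorBrackets} was stated for an arbitrary splitting, so the identities apply without the unit-extending assumption.
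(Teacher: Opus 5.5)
Your proof is correct and follows the paper's argument. The independence part is identical. For the subalgebra part, the paper simply records the identity $\alpha_g([y_0,y_1]^0)=\rho_e\big(\ell_g^{y_0}(z_1)-\ell_g^{y_1}(z_0)-[z_0,z_1]-\omega_g(y_0,y_1)\big)$ from Proposition~\ref{Prop:anchorBrackets}, which is exactly what your substitution of the first relation of~\eqref{Eq:AnchorRels} into the second produces.
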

\begin{proof}
    Let $\sigma^0$ and $\sigma^1$ be two splittings of~\eqref{Eq:CoreSeq}. 
    Then, for all $y\in\hh$ and all $g\in G$, $\sigma^1_g(y)-\sigma^0_g(y)\in\ker(s_g)$ and there exists a unique $z\in\kk$ such that $\sigma^1_g(y)-\sigma^0_g(y)=z\Join 0_g$. As a consequence, composing with the anchor yields $$\rho_g(\sigma^1_g(y))-\rho_g(\sigma^0_g(y))=\rho_e(z)\ast 0_g,$$
    and $\rho_g(\sigma^1_g(y))\ast0_{g^{-1}}-\rho_g(\sigma^0_g(y))\ast0_{g^{-1}}\in\ll$ thus proving the independence.

    \noindent For the second statement, let $y_0,y_1\in\ker(\bar{\alpha}_g)$ and pick $z_1,z_2\in\kk$, so that $\alpha_g(y_k)=\rho_e(z_k)$ for $k\in\lbrace0,1\rbrace$.
    Then, from Proposition~\ref{Prop:anchorBrackets}, 
    \begin{align}\label{Eq:SubAlg}
        \alpha_g([y_0,y_1]^0)=\rho_e\big(\ell_g^{y_0}(z_1)-\ell_g^{y_1}(z_0)-[z_0,z_1]-\omega_g(y_0,y_1)\big)\in\ll
    \end{align}
    as claimed.
\end{proof} 
In the statement of the following lemma, we stick to our notation and write $\d_\rho:\wedge^n\gg\otimes\kk\longrightarrow\wedge^{n+1}\gg\otimes\kk$ for the \textbf{Chevalley-Eilenberg differential} of a Lie algebra $\gg$ with values in a \emph{quasi-representation} $\rho:\gg\to\kk^*\otimes\kk$. As it has been the case, $\d_\rho$ will not square to zero unless $\rho$ is a true representation. 

\begin{prop}\label{Prop:ALgbdIsotropies}
    Let $A\rightrightarrows\hh$ be an LA-group over $G$ with core $\kk$ and let $g\in G$. Then, there is a short sequence of Lie algebras
    \begin{eqnarray}\label{Eq:AlgIsoCoreSeq}
    \xymatrix{
    (0) \ar[r] & \mathfrak{k} \ar[r] & \gg_g(A) \ar[r]^{s_g\quad} & \ker(\bar{\alpha}_g) \ar[r] & (0),
    }
    \end{eqnarray}
    where $\mathfrak{k}=\gg_e(A)\cap\kk$ and the injective map is given by right multiplication $z\mapsto z\Join0_g$.
    Moreover, given a splitting $\sigma$ of the core sequence~\eqref{Eq:CoreSeq} of $A\rightrightarrows\hh$, a splitting of~\eqref{Eq:AlgIsoCoreSeq} is determined by a map $Z\in\ker(\bar{\alpha}_g)^*\otimes\kk$ such that $\alpha_g(y)=\rho_e(Z(y))$; such splitting induces an isomorphism of Lie algebras between $\gg_g(A)$ and the semi-direct product of $\ker(\bar{\alpha}_g)$ and $\mathfrak{k}$ with respect to the representation $\rho^Z:=\ell_g-\ad\circ Z$ twisted by the 2-cocycle $\mu^Z:=\omega_g-\d_{\ell_g}Z+\frac{1}{2}[Z,Z]\in\wedge^2\ker(\bar{\alpha}_g)^*\otimes\mathfrak{k}$.
\end{prop}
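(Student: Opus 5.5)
We mirror the proof of Proposition~\ref{Prop:GpdIsotropies}, with the AUTH of Example~\ref{Ex:LA-AUTH} playing the r\^ole of the RUTH. Fix a splitting $\sigma$ and identify $A_g\cong\kk\oplus\hh$, $(z,y)\mapsto z\Join0_g+\sigma_g(y)$. In right trivialization the anchor reads $\rho_g(z,y)=\rho_e(z)+\alpha_g(y)$, so
\[
\gg_g(A)\cong\lbrace(z,y)\mid\rho_e(z)+\alpha_g(y)=0\rbrace .
\]

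\textbf{Exactness of \eqref{Eq:AlgIsoCoreSeq}.} The map $s_g(z,y)=y$ sends $\gg_g(A)$ into $\ker(\bar\alpha_g)$, since $\alpha_g(y)=-\rho_e(z)\in\ll$. It is onto: given $y\in\ker(\bar\alpha_g)$, choose $z$ with $\rho_e(z)=-\alpha_g(y)$. Its kernel is $\lbrace(z,0)\mid\rho_e(z)=0\rbrace=\mathfrak{k}\Join0_g$.

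\textbf{Lie algebra maps.} The bracket on $\gg_g(A)$ is computed by extending elements to sections and evaluating at $g$. Anchors of such extensions vanish at $g$, so derivative terms drop out and we may use constant sections. Formula \eqref{Eq:Bra} then gives the pointwise bracket on $\gg_g(A)$:
\[
[(z_0,y_0),(z_1,y_1)]=\big([z_0,z_1]+\ell_g^{y_0}z_1-\ell_g^{y_1}z_0+\omega_g(y_0,y_1),[y_0,y_1]^0\big).
\]
This is where the restriction to the isotropy matters: the terms of the form $(d_g\cdot)_{\rho(\cdot)}$ disappear. Consequently $s_g$ is a homomorphism, $\mathfrak{k}$ sits as an ideal, and Lemma~\ref{lemma:AlphaBarIndep} shows that the target $\ker(\bar\alpha_g)$ is a Lie algebra.

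\textbf{Splittings and the semi-direct product.} A linear splitting of \eqref{Eq:AlgIsoCoreSeq} has the form $y\mapsto(-Z(y),y)$ for $Z\in\ker(\bar\alpha_g)^*\otimes\kk$ with $\rho_e(Z(y))=\alpha_g(y)$. The sign convention should be fixed to match the statement, or equivalently $Z$ replaced by $-Z$ in the bookkeeping. Transport the bracket along $\mathfrak{k}\oplus\ker(\bar\alpha_g)\to\gg_g(A)$, $(v,y)\mapsto(v-Z(y),y)$, and read off two pieces:
\begin{itemize}
\item the mixed bracket $[(0,y),(v,0)]$ gives $\rho^Z_y(v)=\ell_g^yv-[Z(y),v]$;
\item the $\mathfrak{k}$-component of $[(0,y_0),(0,y_1)]$ gives
\[
\omega_g(y_0,y_1)-\ell_g^{y_0}Z(y_1)+\ell_g^{y_1}Z(y_0)+Z([y_0,y_1]^0)+[Z(y_0),Z(y_1)],
\]
which is $\mu^Z=\omega_g-\d_{\ell_g}Z+\frac12[Z,Z]$.
\end{itemize}

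\textbf{Main obstacle.} The remaining checks are that $\rho^Z$ preserves $\mathfrak{k}$, that $\mu^Z$ takes values in $\mathfrak{k}$, and that $\rho^Z$ is a genuine representation with $\mu^Z$ a genuine cocycle. This is where the defect terms must cancel, in analogy with Remark~\ref{Rmk:WhyNotOmega}.

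\begin{itemize}
\item For $\rho_e(\mu^Z)=0$: apply $\rho_e$ to $\mu^Z$ and use the second relation of \eqref{Eq:AnchorRels}. Its derivative terms are evaluated at $\rho_g(y_k^\sigma)=\alpha_g(y_k)=\rho_e(Z(y_k))$. Combine this with the first relation of \eqref{Eq:AnchorRels} for $\rho_e(\ell_g^{y}Z(y'))$. The derivatives of $\alpha$ should cancel pairwise, leaving $\rho_e(\mu^Z)=0$. The invariance $\rho^Z(\mathfrak{k})\subseteq\mathfrak{k}$ follows from the same first relation together with the fact that $\rho_e$ is a homomorphism.
\item For the representation and cocycle identities: no direct computation is needed. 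They follow formally from the Jacobi identity in the Lie algebra $\gg_g(A)$, transported through the linear isomorphism above, exactly as for any abelian-free extension with a chosen splitting.
\end{itemize}

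Thus the real work is the cancellation of the $d_g\alpha$ terms, and I would organize it by evaluating Proposition~\ref{Prop:anchorBrackets} at the anchor values $\rho_e(Z(y))$.
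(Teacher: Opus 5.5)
Your main line is exactly the paper's proof. You identify $\gg_g(A)$ with $\lbrace(z,y)\mid\rho_e(z)+\alpha_g(y)=0\rbrace$, prove exactness, read the isotropy bracket off \eqref{Eq:Bra} using constant sections, and extract $\rho^Z$ and $\mu^Z$ from the lift $\tau(y)=\sigma_g(y)-Z(y)\Join0_g$. Your sign convention already agrees with the statement, so no adjustment is needed.

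What you call the main obstacle is not one. You observed yourself that $\mathfrak{k}\Join0_g=\ker(s_g)\cap\gg_g(A)$ is an ideal of $\gg_g(A)$. Now $\rho^Z_y(v)\Join0_g=[\tau(y),v\Join0_g]$ and $\mu^Z(y_0,y_1)\Join0_g=[\tau(y_0),\tau(y_1)]-\tau([y_0,y_1]^0)$ are elements of $\gg_g(A)$ killed by $s_g$, so they are automatically $\mathfrak{k}$-valued. Your cancellation of the $d_g\alpha$ terms via Proposition~\ref{Prop:anchorBrackets} does work: the derivatives are taken along $\rho_g(y_k^\sigma)=\rho_g(Z(y_k)^R_g)$ and cancel in pairs. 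It is, however, redundant; the paper simply asserts $\mathfrak{k}$-valuedness.

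The step that fails as written is your last bullet. The Jacobi identity in $\gg_g(A)$ makes $\rho^Z$ a genuine representation and $\mu^Z$ an ordinary $2$-cocycle only when $\mathfrak{k}$ is abelian. In general it yields non-abelian extension data: $\rho^Z_y\in\textnormal{Der}(\mathfrak{k})$, $[\rho^Z_{y_0},\rho^Z_{y_1}]-\rho^Z_{[y_0,y_1]^0}=\ad_{\mu^Z(y_0,y_1)}$ on $\mathfrak{k}$, and $\d_{\rho^Z}\mu^Z=0$.

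For a counterexample, take $G=\lbrace e\rbrace$ and the strict Lie 2-algebra of the crossed module $\textnormal{id}:\hh\to\hh$ with $\ell_e=\ad$. Then $\mathfrak{k}=\kk=\hh$, $\omega_e=0$ and $\ker(\bar\alpha_e)=\hh$. With $Z=\lambda\,\textnormal{id}$ one gets $\rho^Z=(1-\lambda)\ad$ and $\mu^Z=(\lambda^2-\lambda)[\cdot,\cdot]^0$. So $\rho^Z$ is not a representation for, e.g., $\hh=\mathfrak{sl}_2(\Rr)$ and $\lambda\neq0,1$.

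The correct conclusion needs no further verification. The bracket transported to $\mathfrak{k}\oplus\ker(\bar\alpha_g)$ is $\big([v_0,v_1]+\rho^Z_{y_0}v_1-\rho^Z_{y_1}v_0+\mu^Z(y_0,y_1),[y_0,y_1]^0\big)$, which is a Lie bracket by construction. The words ``representation'' and ``$2$-cocycle'' in the statement must therefore be read in this twisted, non-abelian sense. The paper's proof also passes over this point in silence.
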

\begin{proof}
    That the map $\mathfrak{k}\to\gg_g(A)$ is well-defined follows as, given $z\in\mathfrak{k}$, $\rho_g(z\Join0_g)=\rho_e(z)\ast0_g=0_g\in T_gG$. This map is clearly injective as $z\Join0_g=0_g$ implies $z=0_g\Join0_{g^{-1}}=0_e\in\mathfrak{k}$. Moreover, the sequence~\eqref{Eq:AlgIsoCoreSeq} is exact at $\gg_g(A)$ because $s_g(z\Join 0_g)=s_g(0)=0\in\hh$ for all $z\in\mathfrak{k}$ and, conversely, if $a\in\ker(s_g)\cap\gg_g(A)$, $a\Join0_{g^{-1}}\in\kk$ and $\rho_e(a\Join0_{g^{-1}})=\rho_g(a)\ast 0_{g^{-1}}=0\in\gg$. Using a splitting $\sigma$ of the core sequence~\eqref{Eq:CoreSeq} of $A\rightrightarrows\hh$, one verifies that $s_g$ restricted to $\gg_g(A)$ is well-defined and surjective. Indeed, if $a\in\gg_g(A)\subseteq A_g$, then $a$ corresponds to some $(z,y)\in\kk\oplus\hh$ and $\rho(z,y)=\rho_e(z)+\alpha_g(y)=0\in\gg$; therefore, $s_g(a)=y\in\ker(\bar{\alpha}_g)$. For the surjectivity, if $y\in\ker(\bar{\alpha}_g)$, there exists a $z\in\kk$ such that $\alpha_g(y)=\rho_e(z)$ and the element $\sigma_g(y)-z\Join0_g\in\gg_g(A)$.

    \noindent To prove the last statement, note that a map $Z\in\ker(\bar{\alpha}_g)^*\otimes\kk$ such that $\alpha_g(y)=\rho_e(Z(y))$ indeed defines a splitting 
    $y\longmapsto \sigma_g(y)-Z(g)\Join 0_g$. Any such splitting induces a representation $\rho^Z$ by adjoining in $\gg_g(A)$ and a cocycle $\mu^Z\in \wedge^2\ker(\bar{\alpha}_g)^*\otimes\mathfrak{k}$. Letting $y\in\ker(\bar{\alpha}_g)$ and $z\in\mathfrak{k}$, one computes
    \begin{align*}
        \rho^Z_y(z) & =[\sigma_g(y)-Z(g)\Join0_g,z\Join 0_g]_{\gg_g(A)}\Join0_{g^{-1}} \\
            & =[y^\sigma-Z(g)^R,z^R]^1_g\Join0_{g^{-1}}=\ell^y_g(y)-[Z(y),z]=(\ell_g^y-\ad_{Z(y)})(z)\in\mathfrak{k}.
    \end{align*}
    Given $y_0,y_1\in\ker(\bar{\alpha}_g)$, the value of the cocycle $\mu^Z(y_0,y_1)$ is given by the difference between 
    $$[\sigma_g(y_0)-Z(y_0)\Join0_g,\sigma_g(y_1)-Z(y_1)\Join0_g]_{\gg_g(A)}\Join0_{g^{-1}}$$
    and 
    $$\big(\sigma_g([y_0,y_1]^0)-Z([y_0,y_1]^0)\Join0_g\big)\Join0_{g^{-1}}$$
    Since
    \begin{align*}
        [\sigma_g(y_0)-Z(y_0)\Join0_g,\sigma_g(y_1)-Z(y_1)\Join0_g]_{\gg_g(A)} & =[y_0^\sigma-Z(y_0)^R,y_1^\sigma-Z(y_1)^R]^1_g,
    \end{align*}
    we conclude
    \begin{align*}
        \mu^Z(y_0,y_1) & =\omega_g(y_0,y_1)-\ell_g^{y_0}(Z(y_1))+\ell_g^{y_1}(Z(y_0))+Z([y_0,y_1]^0)+[Z(y_0),Z(y_1)] \\
                       & =\omega_g(y_0,y_1)-(\d_{\ell_g}Z)(y_0,y_1)+[Z(y_0),Z(y_1)]\in\mathfrak{k}
    \end{align*}
    as claimed.
\end{proof}
\begin{remark}\label{Rmk:WhyNoTomega}
    Echoing Remark~\ref{Rmk:WhyNotOmega}, because of formula~\eqref{Eq:Bra}, one is tempted to guess that $\omega_g$ is the 2-cocycle twisting the semi-direct product in Proposition~\ref{Prop:ALgbdIsotropies}; however, as we point out in Remark~\ref{Rmk:NotCocycles}, $\omega$ is not necessarily a true cocycle, nor is $\ell_g$ a true representation. In addition, $\omega$ has no reason to be $\mathfrak{k}$-valued. 
\end{remark}

To get a result that parallels Proposition~\ref{Prop:GpdOrbits}, we note the following. 
Since $\F^\kk:=\rho\big(\Gamma(\ker(s))\big)$ is an integrable submodule of $\F^A:=\rho\big(\Gamma(A)\big)$, the leaves of the foliation $\F^\kk$ are submanifolds of the leaves of the Lie algebroid $A$. The following is an easy consequence of Corollary~\ref{Cor:lsubalg}.
\begin{lemma}\label{lemma:leaf1}
    Let $A\rightrightarrows\hh$ be an LA-group over $G$ with core $\kk$. If $L_e^A\subseteq G$ is the leaf of $A$ through the identity element $e\in G$, then $L_e^A$ is a Lie subgroup of $G$ with Lie algebra $\ll=\rho_1(\kk)\leq\gg$. 
\end{lemma}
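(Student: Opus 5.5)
The plan is to show that $L_e^A$ coincides with the leaf through $e$ of the right-invariant distribution $\F^\kk$, and then to use that a right-invariant involutive distribution on $G$ has, as its leaf through $e$, the connected Lie subgroup with the corresponding Lie algebra.

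First, by Eq.~\eqref{Eq:RtoR} and Corollary~\ref{Cor:lsubalg}, the distribution $\F^\kk$ is spanned pointwise by the right-invariant vector fields $x^G$ with $x\in\ll=\rho_e(\kk)$. Since $\ll\leq\gg$ is a Lie subalgebra, $\F^\kk$ is the right-invariant involutive distribution $g\mapsto\ll\ast0_g$. By standard Lie theory, its leaf through $e$ is the connected (immersed) Lie subgroup $L$ of $G$ with Lie algebra $\ll$.

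Second, I will show $L=L_e^A$. Since $\F^\kk\leq\F^A$, we get $L\subseteq L_e^A$. For the reverse inclusion it suffices to prove that $\F^A_g=\F^\kk_g$ for every $g\in L$; then $L$ is an integral manifold of $\F^A$ of the same dimension as $\F^A$ along it, hence open in $L_e^A$. Being also closed in the leaf topology (its complement is a union of other $\F^\kk$-leaves), $L$ equals the connected leaf. By the description after Proposition~\ref{Prop:Rho&RUTH}, $\F^A_g$ corresponds, after right trivialization, to $\ll+\alpha_g(\hh)$. So the claim reduces to showing $\alpha_g(\hh)\subseteq\ll$ for all $g\in L$, that is, $\bar\alpha_g\equiv0$ on $L$ in the notation of Lemma~\ref{lemma:AlphaBarIndep}.

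This is the main step. I will use Eq.~\eqref{Eq:AnchorRUTH'}: $\alpha_{gh}(y)=\alpha_g(\Delta^\hh_hy)+\Ad_g\alpha_h(y)+\rho_e(\Omega_{(g,h)}(y))$. Since $\alpha_e=0$ and $\Ad_g$ preserves $\ll$ when $g\in L$, the set $\{g\in L:\alpha_g(\hh)\subseteq\ll\}$ is closed under products; every element of $L$ is a product of elements close to $e$ along $\ll$-directions.

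Differentiating instead gives a cleaner argument. The first relation in~\eqref{Eq:AnchorRels} gives
\[
(d_g\alpha)_{\rho_e(z)^G_g}(y)=[\alpha_g(y),\rho_e(z)]-\rho_e(\ell^y_gz).
\]
Hence, along any curve in $L$ tangent to $\F^\kk$, the quantity $\bar\alpha$ satisfies a linear ODE of the form $\tfrac{d}{dt}\bar\alpha=-[\rho_e(z),\bar\alpha]$ modulo $\ll$, which is well defined on $\gg/\ll$ because $\ll$ is a subalgebra. Since $\bar\alpha_e=0$, uniqueness of ODE solutions forces $\bar\alpha\equiv0$ along such curves. As $L$ is connected by piecewise $\F^\kk$-curves from $e$, this gives $\bar\alpha\equiv0$ on $L$. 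I expect this ODE/uniqueness step to be the delicate point, in particular verifying that the equation genuinely descends to $\gg/\ll$.
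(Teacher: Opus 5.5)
Your proof reaches the right conclusion, but by a longer route than the paper intends. The paper gives no written argument beyond calling the lemma an easy consequence of Corollary~\ref{Cor:lsubalg}. The implicit step is a dimension count:
\begin{itemize}
\item The leaf $L^A_e$ is a connected immersed submanifold with $T_gL^A_e=\rho_g(A_g)$. So its dimension is constant and equals $\dim\rho_e(A_e)=\dim\rho_e(\kk)=\dim\ll$, because $\alpha_e=0$ (cf. Lemma~\ref{lemma:uAnchord}).
\item $\F^\kk$ has rank $\dim\ll$ at every point and $\F^\kk\leq\F^A$, so the two distributions coincide along all of $L^A_e$.
\item Hence $L^A_e$ is a connected integral manifold of $\F^\kk$ through $e$. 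It is therefore an open piece of the connected subgroup $L$ integrating $\ll$, and in fact equal to it, since $L\subseteq L^A_e$ by $\F^\kk\leq\F^A$.
\end{itemize}
In particular $\bar\alpha\equiv0$ on $L^A_e$ comes for free. Your ODE step is valid: it descends to $\gg/\ll$ because $\ll$ is a subalgebra and $\rho_e(\ell^y_gz)\in\ll$, and it does not depend on the sign convention for brackets of right-invariant fields. But it is not needed.

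More importantly, your closing step silently depends on that same dimension count. The justification ``its complement is a union of other $\F^\kk$-leaves'' does not by itself make $L$ closed in $L^A_e$, because a union of leaves of a foliation need not be open (think of dense leaves). You need every coset $Lg'\subseteq L^A_e$ to be open in $L^A_e$, i.e.\ $\F^A_{g'}=\F^\kk_{g'}$ also for $g'\notin L$. Your ODE gives no information there; it only transports the value of $\bar\alpha$ along a coset from one of its points. The repair is that $L^A_e$, being a connected manifold, has constant dimension $\dim\ll$. Once you invoke this, the ODE can be dropped altogether. Your computation would only earn its keep if you wanted to avoid the Stefan--Sussmann description of the leaf, and instead show directly that flows of the fields $\rho(a)$, $a\in\Gamma(A)$, starting on $L$ never leave $L$.
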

 
\begin{prop}\label{Prop:ALgbdOrbits}
Let $A\rightrightarrows\hh$ be an LA-group over $G$. If $L^A_g$ is the leaf of $A$ through $g\in G$, then $L^A_e$ acts freely on $L^A_g$. Furthermore, if $L^A_e$ is closed in $G$, the action is also proper and defines a principal bundle.   
\end{prop}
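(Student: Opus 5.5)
The plan is to let $L^A_e$ act on $L^A_g$ by right multiplication in $G$, restricting the free action of $G$ on itself. The whole proof then reduces to showing that $L^A_g$ is stable under right translation by elements of $L^A_e$.

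\textbf{Step 1: orbits of right-invariant fields.} By Eq.~\eqref{Eq:RtoR}, the anchor sends the constant core section $z^R$ to the right-invariant vector field $\rho_e(z)^G$. So $\F^\kk$ is spanned, as a module, by the right-invariant fields with values in $\ll=\rho_e(\kk)$. Their flows are left multiplications $h\mapsto \exp(tx)h$, for $x\in\ll$, and these flows preserve the leaves of $A$. Hence the leaf of $\F^\kk$ through $g$ is $L^A_e\,g$. Here Lemma~\ref{lemma:leaf1} and Corollary~\ref{Cor:lsubalg} give that the leaf of $\F^\kk$ through $e$ is the connected subgroup with Lie algebra $\ll$.

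\textbf{Step 2: identify this subgroup with $L^A_e$.} At the identity, $\alpha_e\equiv0$ by Proposition~\ref{Prop:uMAP}. Thus $\rho_e(A_e)=\ll$, so $L^A_e$ and the $\F^\kk$-leaf through $e$ are tangent at $e$. To get tangency everywhere on $L^A_e$, I will use the multiplicativity of the anchor from Proposition~\ref{Prop:Rho&RUTH}. The anchor $\rho:A\to TG$ is a groupoid morphism and $TG\rightrightarrows\gg$ has $\rho(\ker s)$ right-invariant. Together these show that $\rho(A)$ is a multiplicative distribution.

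\textbf{Step 3: the left action.} Take $a\in A_g$ and $b\in A_h$ with $\rho(b)$ tangent to $L^A_h$, and multiply by suitable core elements. This shows that $\rho_{hg}(A_{hg})\supseteq \rho_e(\kk)\ast\rho_g(A_g)$. Using this inclusion, I will show that left multiplication by $L^A_e$ maps the leaf $L^A_g$ into a leaf. Since $e\cdot g=g$, that leaf must be $L^A_g$ itself. The action is free because group multiplication is free.

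\textbf{Step 4: properness and the principal bundle.} If $L^A_e$ is closed, it is an embedded closed subgroup of $G$. Its action by left multiplication on $G$ is then proper, and properness passes to the invariant subset $L^A_g$. Because $L^A_g$ is only immersed, I will argue properness through sequences: convergence in the leaf topology implies convergence in $G$. A free and proper action then yields a principal $L^A_e$-bundle $L^A_g\to L^A_e\backslash L^A_g$.

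\textbf{Main obstacle.} The hard step is the invariance of $L^A_g$ under $L^A_e$, and especially the identification of $L^A_e$ with the integral subgroup of $\ll$ (Step 2). My first attempt will be the flow argument of Step 1: because right-invariant fields $\rho(z^R)$ are in $\F^A$, their flows preserve every leaf. So $\exp(\ll)\,L^A_g\subseteq L^A_g$, and by connectedness the integral subgroup of $\ll$ preserves $L^A_g$. This also settles the action once Lemma~\ref{lemma:leaf1} is granted.

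The remaining issue is the topology of the immersed leaf in Step 4. I will handle it by noting that the action map $L^A_e\times L^A_g\to L^A_g$ is smooth for the leaf structures, since it is given by flows.
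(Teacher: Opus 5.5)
Your operative argument, Step~1 together with the flow argument in your last paragraph, is the paper's proof. The field $\rho(z^R)=\rho_e(z)^G$ lies in $\F^A$, so its flow $\gamma\mapsto\exp(t\rho_e(z))\gamma$ preserves every leaf of $A$. Hence the integral subgroup of $\ll$, which is $L^A_e$ by Lemma~\ref{lemma:leaf1}, acts on $L^A_g$ by \emph{left} multiplication. The paper phrases this as $L^A_e\cdot\gamma\subseteq L^A_\gamma=L^A_g$ via Corollary~\ref{Cor:lsubalg}; your opening phrase ``right multiplication'' is a slip. Freeness is immediate. Your sequential argument for properness is a more careful version of the paper's one-line remark: if $\gamma_n\to\gamma$ and $l_n\gamma_n\to\gamma'$ in the leaf, then $l_n=(l_n\gamma_n)\gamma_n^{-1}\to\gamma'\gamma^{-1}$ in $G$, hence also in the closed, and so embedded, subgroup $L^A_e$.

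Steps~2 and~3, however, should be deleted rather than relied upon. The claim in Step~2 that $\rho(A)$ is a multiplicative distribution is false in general. $\rho$ is a groupoid morphism, but $\rho(a)\ast\rho(b)$ is only realized by $\rho(a\Join b)$ when $a$ and $b$ are composable. For example, take the cotangent LA-group of $SU(2)$ with its standard Poisson--Lie structure. There $\rho(A)$ is the symplectic distribution, of rank at most $2$, whereas $\rho_g(A_g)\ast\rho_h(A_h)$ is generically all of $T_{gh}G$.

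What Step~2 needs is exactly Lemma~\ref{lemma:leaf1}, which also follows from a dimension count. The rank of $\rho$ is constant along $L^A_e$ and equals $\dim\ll$ at $e$, because $\alpha_e\equiv0$. So each $\F^\kk$-leaf inside $L^A_e$ is open in it, and connectedness leaves only one.

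The inclusion in Step~3 does not typecheck, since $\rho_e(\kk)\ast\rho_g(A_g)\subseteq T_gG$ rather than $T_{hg}G$. The true statement is $0_h\ast\rho_g(A_g)\subseteq\rho_{hg}(A_{hg})$ for $h\in L^A_e$. Proving it needs elements of $\gg_h(A)$ with prescribed source, which exist because $\bar\alpha_h=0$ (cf.\ Proposition~\ref{Prop:ALgbdIsotropies}), not core elements. In any case, the flow argument makes it superfluous.
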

\begin{proof}
    It follows from Corollary~\ref{Cor:lsubalg} that the leaf through $g\in G$ integrating the distribution $\F^\kk$ is the coset of the subgroup of Lemma~\ref{lemma:leaf1} $L_e^A\cdot g\in G/L^A_e$; therefore, $L_e^A\cdot g\subseteq L^A_g$. This induces a natural action of $L_e^A$ on $L^A_g$ by left multiplication; indeed, if $l\in L^A_e$ and $\gamma\in L^A_g$, $l\cdot\gamma\in L^A_e\cdot\gamma\subseteq L^A_\gamma=L^A_g$. This action is obviously free and properness depends on the closedness of $L_e^A\cdot g$ in $L^A_g$, which follows from $L^A_e$ being closed.
\end{proof}

We conclude this section by collecting observations that add to the \emph{symmetry picture} of Remarks~\ref{Rmk:CoreDiagram} and~\ref{Rmk:AUTH}.

\begin{lemma}\label{lemma:kerD}
    Let $A\rightrightarrows\hh$ be an LA-group over $G$ with core $\kk$ and let $\partial:\kk\longrightarrow\hh$ be the restriction of the target map. If $\Delta^1:G\longrightarrow\textnormal{GL}(\ker(\partial))$ is the action of Lemma~\ref{lemma:Dela1Indep}, then $\Delta^1(G)$ is a subspace of Lie algebra automorphisms. Moreover, the restriction of the anchor map $\rho_e:\ker(\partial)\longrightarrow\gg$ is a $G$-equivariant Lie algebra homomorphism.
\end{lemma}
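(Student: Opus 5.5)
Fix a unit extending splitting $\sigma$ of~\eqref{Eq:CoreSeq} and work with the induced RUTH $(\partial,\Delta^\hh,\Delta^\kk,\Omega)$ and the data $(\rho_e,\alpha,\ell,\omega)$ of Section~\ref{sec:3}. By Lemma~\ref{lemma:Dela1Indep}, $\Delta^1$ does not depend on this choice. The statement has three parts: $\ker(\partial)$ is a Lie subalgebra of $\kk$, each $\Delta^1_g$ preserves its bracket, and $\rho_e$ restricted to it is equivariant and a homomorphism.

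\textbf{Subalgebra and homomorphism property.} By Proposition~\ref{Prop:Target}, $\partial:\kk\to\hh$ is a Lie algebra homomorphism, so $\ker(\partial)$ is an ideal of $\kk$, in particular a Lie subalgebra. By Proposition~\ref{Prop:anchorBrackets}, $\rho_e:\kk\to\gg$ is a Lie algebra homomorphism, hence so is its restriction to $\ker(\partial)$.

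\textbf{Equivariance.} This is the last clause of Proposition~\ref{Prop:Rho&RUTH}. For $z\in\ker(\partial)$, Eq.~\eqref{Eq:AnchorRUTH} reads $\rho_e(\Delta^1_gz)=\Ad_g(\rho_e(z))+\alpha_g(\partial z)=\Ad_g(\rho_e(z))$.

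\textbf{Automorphism property.} This is the real content of the lemma. Use Eq.~(M11) of Proposition~\ref{Prop:AllMult} with $z_0,z_1\in\ker(\partial)$. The left-hand side is $\omega_g(0,0)=0$. In the derivative terms, $\partial z_k^\sigma$ is the constant section associated with $0\in\hh$, i.e.\ the zero section, so its anchor vanishes and so do both terms $(d_g\Delta^\kk)_{\rho_g(\partial z_k^\sigma)}$. What remains is
\[
[\Delta^\kk_gz_0,\Delta^\kk_gz_1]=\Delta^\kk_g[z_0,z_1],
\]
that is, $\Delta^1_g[z_0,z_1]=[\Delta^1_gz_0,\Delta^1_gz_1]$ on $\ker(\partial)$. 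Together with $\Delta^1_g$ being invertible, this shows $\Delta^1_g$ is an automorphism. The same argument could be run through Eq.~\eqref{Eq:coreBra}, but (M11) is immediate.

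\textbf{Main obstacle.} The delicate point is checking that the derivative terms in (M11) really vanish. One must confirm that $\partial z^\sigma$ means the constant section $(\partial z)^\sigma$, which is zero when $\partial z=0$, rather than some differentiated term that survives. If that reading fails, the fallback is to differentiate the RUTH identity $\Delta^\kk_{gh}-\Delta^\kk_g\Delta^\kk_h=\Omega_{(g,h)}\partial$ restricted to $\ker(\partial)$. This shows $\Delta^1$ is an honest smooth representation, so its derivative along $\ker(\partial)$ is controlled, and the same conclusion follows.
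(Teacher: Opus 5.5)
Your proposal is correct and follows the paper's own proof. The automorphism property is exactly (M11) of Proposition~\ref{Prop:AllMult} restricted to $z_0,z_1\in\ker(\partial)$, and equivariance and the homomorphism property come from Propositions~\ref{Prop:Rho&RUTH} and~\ref{Prop:anchorBrackets}. Your reading of $\partial z^\sigma$ as $(\partial z)^\sigma$ is the right one, so the derivative terms and $\omega_g(0,0)$ drop out and the fallback is not needed; note that it would not work anyway, since $\Delta^1$ being a representation says nothing about brackets.
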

\begin{proof}
    Relation (M11) in Proposition~\ref{Prop:AllMult} restricted to $z_0,z_1\in\ker(\partial)$ implies the first statement. The second statement is in Propositions~\ref{Prop:Rho&RUTH} and~\ref{Prop:anchorBrackets}.
\end{proof}

\begin{prop}\label{Prop:kerD}
    Let $A\rightrightarrows\hh$ be an LA-group over $G$ with core $\kk$ and let $\partial:\kk\longrightarrow\hh$ be the restriction of the target map. If $\Delta^{(1)}:=d_e\Delta^1:\gg\longrightarrow\ggl(\ker(\partial))$ is the derivative of the action of Lemma~\ref{lemma:Dela1Indep} at the identity, then, along with this action, the restriction of the anchor map $\rho_e:\ker(\partial)\longrightarrow\gg$ is a crossed module of Lie algebras.
\end{prop}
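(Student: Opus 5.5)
The plan is to check the four crossed-module axioms one at a time, each from an identity already established: $\rho_e$ is a Lie algebra homomorphism, $\Delta^{(1)}$ is a representation by derivations, $\rho_e$ is equivariant, and the Peiffer identity $\Delta^{(1)}_{\rho_e(z_0)}z_1=[z_0,z_1]$ holds. Here $\Delta^{(1)}_x:=(d_e\Delta^\kk)_x$ restricted to $\ker(\partial)$.

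The one genuinely delicate point is the sign convention, and I would settle it first. There are two places where a sign enters. First, $\gg$ must carry the bracket for which $\frac{d}{dt}\big|_{t=0}\Ad_{\exp(tx)}=\ad_x$. Second, the bracket on $\kk$ is induced by right-invariant sections via Lemma~\ref{lemma:RInvBra}, and $\rho_e$ is a homomorphism "by \eqref{Eq:RtoR}". Now \eqref{Eq:RtoR} sends $z^R$ to the right-invariant field $\rho_e(z)^G$, and the bracket of right-invariant vector fields is the \emph{negative} of the bracket on $\gg$. So for $\rho_e$ to be a homomorphism into $\gg$ with the $\Ad$-compatible bracket, $\ker(\partial)\leq\kk$ must be given the bracket opposite to the raw bracket of right-invariant sections. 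I will write $[\cdot,\cdot]$ for this opposite bracket and $[\cdot,\cdot]^R$ for the raw one. Since $z\mapsto z$ is an anti-isomorphism between the two, the homomorphism property of $\rho_e$ (Proposition~\ref{Prop:anchorBrackets} and Lemma~\ref{lemma:kerD}) holds in this convention. It is this opposite bracket in which the Peiffer identity below comes out with the sign stated in the proposition.

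Next, $\Delta^{(1)}$ is well defined and a representation. By Lemma~\ref{lemma:Dela1Indep}, $\Delta^1:G\to\mathrm{GL}(\ker(\partial))$ is an honest action, so its derivative at $e$ is a Lie algebra map $\gg\to\ggl(\ker(\partial))$. By Lemma~\ref{lemma:kerD}, each $\Delta^1_g$ is an automorphism of $\ker(\partial)$, and this is unchanged by passing to the opposite bracket. Differentiating along $g=\exp(tx)$ then shows that $\Delta^{(1)}_x$ acts by derivations. For equivariance, restrict \eqref{Eq:G-equiv} to $z\in\ker(\partial)$; the term $\alpha_g(\partial z)$ vanishes and we get $\rho_e(\Delta^1_gz)=\Ad_g(\rho_e(z))$. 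Differentiating at $e$ in the direction $x$ gives $\rho_e(\Delta^{(1)}_xz)=[x,\rho_e(z)]$.

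The last step, which I expect to be the main obstacle purely because of signs, is the Peiffer identity. I will use \eqref{Eq:coreBra}, which is (M01) at $e$, written in the raw right-invariant bracket:
\[
[z_1,z_0]^R=\ell_e^{\partial z_1}(z_0)+(d_e\Delta^\kk)_{\rho_e(z_0)}(z_1).
\]
Take $z_1\in\ker(\partial)$. The $\ell_e$ term vanishes because $\ell_e$ is linear in its upper argument, which leaves
\[
\Delta^{(1)}_{\rho_e(z_0)}z_1=[z_1,z_0]^R=[z_0,z_1]
\]
for all $z_0,z_1\in\ker(\partial)$. This is exactly the Peiffer identity in the convention fixed above, and it completes the verification. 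The computation is a one-liner; the risk lies entirely in keeping the bracket convention consistent across all four axioms. I would therefore double-check it against the equivariance identity. Applying $\rho_e$ to the Peiffer identity gives $\rho_e(\Delta^{(1)}_{\rho_e(z_0)}z_1)=\rho_e([z_0,z_1])=[\rho_e(z_0),\rho_e(z_1)]$, which is what equivariance predicts with $x=\rho_e(z_0)$. So the two axioms agree precisely because $\ker(\partial)$ carries the opposite bracket.
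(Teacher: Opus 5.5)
Your proposal is correct and follows the paper's own proof. Derivations come from the automorphism part of Lemma~\ref{lemma:kerD}, equivariance from \eqref{Eq:G-equiv} restricted to $\ker(\partial)$, and the Peiffer identity from \eqref{Eq:coreBra} with the $\ell_e$ term killed by $\partial z=0$; your explicit fix of the sign convention (giving $\ker(\partial)$ the bracket opposite to that of right-invariant sections, so that $\rho_e$ is a homomorphism into $\gg$ with its $\Ad$-compatible bracket and the Peiffer identity takes the standard form) is correct and makes precise a point the paper leaves implicit.
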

\begin{proof}
    The first part of Lemma~\ref{lemma:kerD} implies that $\Delta^{(1)}(\gg)$ is a subspace of derivations; the second implies that $\rho_e:\ker(\partial)\longrightarrow\gg$ is $\gg$-equivariant. The infinitesimal Peiffer equation is Eq.~\eqref{Eq:coreBra} restricted to $z_0,z_1\in\ker(\partial).$
\end{proof}
\begin{theorem}\label{Theo:Btrfls}
    Let $A\rightrightarrows\hh$ be an LA-group over $G$ with core $\kk$ and let $\partial:\kk\longrightarrow\hh$ be the restriction of the target map. If $\mathfrak{k}=\kk\cap\ker(\rho_e)$ and $\ll:=\rho_e(\kk)$, then $\kk$ is a Morita equivalence between the strict Lie 2-algebras associated with the crossed modules $\partial:\mathfrak{k}\longrightarrow\partial\kk$ of Proposition~\ref{Prop:IsotropyLie2Alg} and $\rho_e:\ker(\partial)\longrightarrow\ll$.
\end{theorem}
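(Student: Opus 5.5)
The plan is to exhibit $\kk$ as the middle term of a \emph{butterfly} between the two crossed modules, in the sense of Noohi/Aldrovandi, and to check that this butterfly is flippable, that is, that both of its diagonals are short exact sequences. Flippable butterflies between crossed modules are precisely the Morita equivalences of the associated strict Lie 2-algebras, so this proves the theorem. Concretely, the diagram is
\begin{eqnarray*}
\xymatrix{
\mathfrak{k} \ar[dr]^{\iota}\ar[dd]_{\partial} & & \ker(\partial) \ar[dl]_{j}\ar[dd]^{\rho_e} \\
 & \kk \ar[dl]_{\partial}\ar[dr]^{\rho_e} & \\
\partial\kk & & \ll
}
\end{eqnarray*}
Here $\iota$ and $j$ are the inclusions, and the two lower maps are the corestrictions of $\partial$ and $\rho_e$. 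By Proposition~\ref{Prop:anchorBrackets}, Corollary~\ref{Cor:lsubalg} and Proposition~\ref{Prop:Target}, $\rho_e$ and $\partial$ are Lie algebra homomorphisms with images the subalgebras $\ll$ and $\partial\kk$. Hence $\mathfrak{k}=\ker(\rho_e)$ and $\ker(\partial)$ are ideals of $\kk$, and all four maps are Lie algebra homomorphisms.

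The exactness and commutativity conditions are then immediate. The sequence $\ker(\partial)\to\kk\to\partial\kk$ is exact by definition, and so is $\mathfrak{k}\to\kk\to\ll$. The compositions $\partial\circ\iota$ and $\rho_e\circ j$ are the structural maps of the two crossed modules, namely $t_e\rest{\mathfrak{k}}=\partial\rest{\mathfrak{k}}$ from Proposition~\ref{Prop:IsotropyLie2Alg} and $\rho_e\rest{\ker(\partial)}$ from Proposition~\ref{Prop:kerD}. The cross compositions $\rho_e\circ\iota$ and $\partial\circ j$ vanish trivially. Finally, $[\mathfrak{k},\ker(\partial)]\subseteq\mathfrak{k}\cap\ker(\partial)$ because both are ideals, which is the commutation condition of a butterfly.

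The substantive step is the equivariance axiom. The adjoint action of $\kk$ on the ideals $\mathfrak{k}$ and $\ker(\partial)$ must agree with the crossed module actions pulled back along $\partial$ and $\rho_e$, respectively. For $c\in\kk$ and $z\in\mathfrak{k}$, I would use Eq.~\eqref{Eq:coreBra}:
$$[c,z]=\ell_e^{\partial c}(z)+(d_e\Delta^\kk)_{\rho_e(z)}(c)=\ell_e^{\partial c}(z)=\Lie_{\partial c}z,$$
since $\rho_e(z)=0$. This is exactly the action of $\partial c\in\partial\kk\leq\hh$ on $\mathfrak{k}$ from Proposition~\ref{Prop:IsotropyLie2Alg}. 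For $w\in\ker(\partial)$, I would instead apply Eq.~\eqref{Eq:coreBra} with $w$ in the first slot:
$$[w,c]=\ell_e^{\partial w}(c)+(d_e\Delta^\kk)_{\rho_e(c)}(w)=\Delta^{(1)}_{\rho_e(c)}(w),$$
since $\partial w=0$. Thus $[c,w]=-\Delta^{(1)}_{\rho_e(c)}w$, which is the action of $\rho_e(c)\in\ll$ on $\ker(\partial)$ from Proposition~\ref{Prop:kerD}, up to the sign convention for left versus right actions.

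I expect the main obstacle to be precisely these conventions. First, the sign in the $\ker(\partial)$ equivariance must be reconciled with the way Proposition~\ref{Prop:kerD} was set up; if necessary I would replace $\Delta^{(1)}$ by its negative, which is what the Peiffer identity there forces. Second, one must make sure the crossed modules are taken with targets $\partial\kk$ and $\ll$ rather than $\hh$ and $\gg$. This restriction is legitimate because the actions of $\hh$ on $\mathfrak{k}$ and of $\gg$ on $\ker(\partial)$ restrict to these subalgebras, and the targets are subalgebras by Corollaries~\ref{Cor:lsubalg} and~\ref{Cor:GpdOrbitsAndIsos}. Once equivariance holds, flippability follows from the exactness of both diagonals, and I would conclude by quoting the standard correspondence between flippable butterflies and Morita equivalences of strict Lie 2-algebras.
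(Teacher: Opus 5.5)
Your proposal is correct and follows the paper's route. The paper's proof simply observes that $\kk$ is the middle term of the butterfly~\eqref{Eq:BttrFly} with both diagonals short exact and invokes Noohi's characterization of Morita equivalences; you carry out the checks (exactness, commutativity, equivariance via Eq.~\eqref{Eq:coreBra}) that the paper calls immediate. Your sign caveat is justified: with brackets defined through right-invariant sections, it is $-\Delta^{(1)}$, not $\Delta^{(1)}$, that satisfies the Peiffer identity.

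One small correction. A butterfly requires the images of $\mathfrak{k}$ and $\ker(\partial)$ to commute, i.e.\ $[\mathfrak{k},\ker(\partial)]=0$, not merely $[\mathfrak{k},\ker(\partial)]\subseteq\mathfrak{k}\cap\ker(\partial)$. This follows from your own equivariance computation with $c\in\ker(\partial)$, since then $[c,z]=\ell_e^{\partial c}(z)=0$.
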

\begin{proof}
    Since Morita equivalences in the category of strict Lie 2-algebras correspond to \textbf{butterflies} both of whose diagonals are short exact sequence (see \cite{Noohi}), the result is immediate.
\end{proof}
\begin{cor}\label{Cor:abIso&orb}
    In the context of Theorem~\ref{Theo:Btrfls}, $V:=\ker(\partial)\cap\ker(\rho_e)$ is an abelian Lie algebra and $\partial\kk/\partial\mathfrak{k}\cong\ll/\rho_e(\ker(\partial))$ as Lie algebras.
\end{cor}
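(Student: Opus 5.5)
The plan is to work directly with the butterfly $\kk$ of Theorem~\ref{Theo:Btrfls}, i.e.\ the diagram
\begin{eqnarray*}
\xymatrix{
\mathfrak{k} \ar[dr] \ar[dd]_{\partial} & & \ker(\partial) \ar[dl] \ar[dd]^{\rho_e} \\
 & \kk \ar[dl]_{\partial} \ar[dr]^{\rho_e} & \\
\partial\kk & & \ll
}
\end{eqnarray*}
whose diagonals $\mathfrak{k}\hookrightarrow\kk\overset{\rho_e}\twoheadrightarrow\ll$ and $\ker(\partial)\hookrightarrow\kk\overset{\partial}\twoheadrightarrow\partial\kk$ are short exact sequences of Lie algebras. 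Here $\partial$ and $\rho_e$ are Lie algebra homomorphisms by Propositions~\ref{Prop:Target} and~\ref{Prop:anchorBrackets}, so $\mathfrak{k}=\ker(\rho_e)$ and $\ker(\partial)$ are ideals of $\kk$. Both claims should then follow by standard butterfly arguments: in a butterfly the two NW--SE and NE--SW wings commute, so the image of one wing lands in the kernel of the other diagonal.

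For the first claim, $V=\ker(\partial)\cap\ker(\rho_e)=\mathfrak{k}\cap\ker(\partial)$ is an intersection of two ideals of $\kk$, hence an ideal. To see it is abelian, I would take $v_0,v_1\in V$ and use Eq.~\eqref{Eq:coreBra}:
$$[v_0,v_1]=\ell_e^{\partial v_0}(v_1)+(d_e\Delta^\kk)_{\rho_e(v_1)}(v_0).$$
Since $\partial v_0=0$ and $\ell_e$ is linear in its upper argument, the first term vanishes. Since $\rho_e(v_1)=0$ and $d_e\Delta^\kk$ is linear in the tangent vector, the second term vanishes too, so $[v_0,v_1]=0$. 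This is precisely the usual fact that for a crossed module (here $\partial:\mathfrak{k}\to\partial\kk$, using Proposition~\ref{Prop:IsotropyLie2Alg}) the kernel is central, hence abelian.

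For the second claim, I would consider the composite Lie algebra homomorphism $\kk\overset{\partial}\longrightarrow\partial\kk\longrightarrow\partial\kk/\partial\mathfrak{k}$. It is surjective, and $\partial\mathfrak{k}$ is an ideal of $\partial\kk$ because $\mathfrak{k}$ is an ideal of $\kk$ and $\partial$ is onto $\partial\kk$. Its kernel is $\partial^{-1}(\partial\mathfrak{k})=\mathfrak{k}+\ker(\partial)$. Symmetrically, $\kk\overset{\rho_e}\longrightarrow\ll\longrightarrow\ll/\rho_e(\ker(\partial))$ is surjective, $\rho_e(\ker(\partial))$ is an ideal of $\ll$, and the kernel is $\rho_e^{-1}(\rho_e\ker(\partial))=\ker(\partial)+\mathfrak{k}$. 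Both quotients are therefore isomorphic as Lie algebras to $\kk/(\mathfrak{k}+\ker(\partial))$, which gives the claimed isomorphism.

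The only point needing care is that everything is a genuine Lie algebra map and that the relevant subspaces are ideals. This is guaranteed by Propositions~\ref{Prop:Target} and~\ref{Prop:anchorBrackets} together with surjectivity onto the images, so I do not anticipate a real obstacle.
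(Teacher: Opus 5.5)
Your proof is correct, but it takes a different route from the paper. The paper gives no separate argument: the corollary is meant to follow from Theorem~\ref{Theo:Btrfls} through the general fact (Noohi) that a butterfly with exact diagonals is a weak equivalence of crossed modules. Such an equivalence induces isomorphisms on the homotopy Lie algebras $\pi_1=\ker$ and $\pi_0=\coker$. Here $\ker(\partial\rest{\mathfrak{k}})=V=\ker(\rho_e\rest{\ker(\partial)})$, and $\pi_1$ of a crossed module is central in its source by the Peiffer identity, so $V$ is abelian. The two $\pi_0$'s are exactly $\partial\kk/\partial\mathfrak{k}$ and $\ll/\rho_e(\ker(\partial))$.

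You unpack this by hand. For abelianness you use the Peiffer-type identity~\eqref{Eq:coreBra}, which in fact gives $[\ker(\partial),\mathfrak{k}]=0$; this is the commuting-wings axiom of the butterfly. For the isomorphism you use the second isomorphism theorem, identifying both quotients with $\kk/(\mathfrak{k}+\ker(\partial))$. This needs only that $\partial$ and $\rho_e$ are Lie algebra homomorphisms on $\kk$ (Propositions~\ref{Prop:Target} and~\ref{Prop:anchorBrackets}).

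What your version buys: it is self-contained and makes the isomorphism explicit and canonical. It also does not depend on the one-line proof of Theorem~\ref{Theo:Btrfls}; in effect you verify directly the butterfly properties that proof takes for granted. What the paper's version buys: it is shorter, and it presents $V$ and the two quotients as the invariants preserved by the Morita equivalence. Your opening appeal to the butterfly is decorative, since nothing in your argument actually uses it.
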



\section{Applications to integrability}\label{sec:5}
In this section, we use Theorem~\ref{Theo:MainEqceSuite} to catalog extreme examples of LA-groups where part of the structure is simplified. 
More precisely, letting $A\rightrightarrows\hh$ be an LA-group over $G$ with core $\kk$ and $\sigma$ be a unit extending splitting of the core sequence~\eqref{Eq:CoreSeq}, 
Theorem~\ref{Theo:MainEqceSuite} implies there is an LA-matched pair. 
Although different splittings may yield different LA-matched pairs, throughout, 
we have identified pieces that are independent of this choice. 
In particular, if $\gg$ is the Lie algebra of $G$, 
the structural maps $\partial:\kk\to\hh$ and $\rho_e:\kk\to\gg$ are so, 
as well as the respective actions of $\hh$ and $G$ induced on the cohomology spaces of these. 
Letting $\mathfrak{k}=\kk\cap\gg_e(A)$ and $\ll=\rho_e(\kk)$, 
in what follows, we consider the butterflies of Theorem~\ref{Theo:Btrfls}
\begin{eqnarray}\label{Eq:BttrFly}
    \xymatrix{
    \mathfrak{k} \ar[dr]\ar[dd]_{\partial} & & \ker(\partial) \ar[dl]\ar[dd]^{\rho_e} \\
     & \kk \ar[dr]\ar[dl] &  \\
     \partial\kk & & \ll 
    }
\end{eqnarray}
and catalog some of the examples that one gets by trivializing their building components. 
The reader should be warned not to confuse the butterfly of~\eqref{Eq:BttrFly} with the core diagrams of, e.g.~\cite{CoreDiagrams}; 
indeed, in the core diagram both $\hh$ and $\gg$ appear in their entirety, as opposed to the images of the structural maps $\partial$ and $\rho_e$.
${}$\\

\noindent In the upcoming subsections, we assume the following simplifications: 
In Subsection~\ref{ssec-LAIntr}, we consider the extreme cases where either $\gg$ or $\hh$ are trivial. 
This will follow as a consequence of a more general integrability result for LA-groups whose algebroid of arrows is regular (see Theorem~\ref{Theo:IntGps}).  
In Subsection~\ref{ssec-TheWorld}, we study the consequences of the structural maps $\partial$ and $\rho_e$ vanishing identically. 
Lastly, in subsection~\ref{ssec-final}, we study the extreme cases of having a transitive $A$, $\ll=\gg$, 
or a totally intransitive $A$, which implies $\ll=(0)$. 

\subsection{The regular case}\label{ssec-LAIntr}
If one supposes that $\gg=(0)$, $G=\ast$ and $A\rightrightarrows\hh$ reduces to a strict Lie 2-algebra (cf. Proposition~\ref{Prop:IsotropyLie2Alg}). 
Strict Lie 2-algebras are known to be integrable to strict Lie 2-groups \cite{ZhuInt2Alg,Angulo:2021}. 
Due to how symmetric double Lie groups are, 
it is expected that if $\hh=(0)$, $A\rightrightarrows (0)$ integrates to a strict Lie 2-group as well. 
This is indeed the case, as one can deduce from the following result.
\begin{theorem}\label{Theo:IntGps}
     Let $A\rightrightarrows\hh$ be an LA-group over $G$. 
     If $A\to G$ is regular, then $A$ is integrable as a Lie algebroid; 
     moreover, $A\rightrightarrows\hh$ is integrable as an LA-group.
\end{theorem}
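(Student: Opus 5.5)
The plan has two stages, mirroring the two assertions of Theorem~\ref{Theo:IntGps}. First I will show that the Lie algebroid $A\to G$ is integrable, using the Crainic--Fernandes criterion \cite{CF2}. Then I will integrate the groupoid structure maps $s,t,u,m$ by Lie's second theorem, applied to source-simply-connected integrations, to obtain a double Lie group. Throughout I fix a unit extending splitting $\sigma$ and work in the trivialization $A\cong G\times(\kk\oplus\hh)$, with the LA-matched pair of Theorem~\ref{Theo:MainEqceSuite}.

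For the first stage, regularity means $\rho_g(A_g)\cong\ll+\alpha_g(\hh)$ has constant dimension. Hence $\ker\rho$ is a smooth bundle of Lie algebras whose fibres are the $\gg_g(A)$ of Proposition~\ref{Prop:ALgbdIsotropies}, and the leaves $L^A_g$ form a regular foliation. By Proposition~\ref{Prop:ALgbdOrbits}, each leaf carries a free left action of the subgroup $L^A_e$ of Lemma~\ref{lemma:leaf1}.

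For a regular algebroid, the monodromy groups $\mathcal N_g\subseteq Z(\gg_g(A))$ are computed from a splitting of the anchor over the leaf and the curvature $2$-form integrated over spheres in $L^A_g$. By \cite{CF2}, integrability is equivalent to the family $\mathcal N_g$ being uniformly discrete. I will use the multiplicativity of $A$ to compare $\mathcal N_g$ with $\mathcal N_e$. Concretely, I multiply $A$-paths over $G$ pointwise, via $m$, against the $A$-paths of the form $t\mapsto \sigma(y)$ and $t\mapsto z^R$ given by constant sections. Since $m$ is a Lie algebroid morphism, it maps $A$-homotopies to $A$-homotopies. This should yield a \emph{multiplicative} monodromy map: the union $\bigcup_g\mathcal N_g\subseteq\ker\rho$ is closed under the groupoid multiplication $\Join$ wherever composable, and is transported between the fibres over $g$ and over $l\cdot g$ for $l\in L^A_e$ through linear isomorphisms depending smoothly on $l$. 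Combined with the local smooth dependence of the monodromy in the regular case, this reduces uniform discreteness near any $g$ to discreteness of $\mathcal N_e\subseteq Z(\gg_e(A))$, together with a uniform estimate along a transversal.

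At $e$, Corollary~\ref{Cor:IsoAt e} identifies $\gg_e(A)\cong\mathfrak{k}\rtimes\hh$, and Proposition~\ref{Prop:IsotropyLie2Alg} makes $\gg_e(A)\rightrightarrows\hh$ a strict Lie $2$-algebra. A strict Lie $2$-algebra is integrable, by \cite{ZhuInt2Alg,Angulo:2021}. I intend to show that this integrability forces $\mathcal N_e$ to lie in a discrete subgroup of the centre, namely the kernel of the exponential map of the integrating $2$-group restricted to the centre.

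This transport-and-uniformity step is the main obstacle. The difficulty is that $\rho_g$ is not a translate of $\rho_e$: it is shifted by $\alpha_g$. So I cannot simply right-translate the sphere data, and must use the multiplication map on $A$-homotopies together with constant rank. I would first try to prove directly that the monodromy map is a morphism of the groupoids $\ker\rho\rightrightarrows\hh$ over the source fibres. Failing that, I would instead pass to the Weinstein groupoid $\mathcal G(A)$, endowed with the topological groupoid structure induced by $m$ on $A$-paths, and show that its isotropy over $e$ is a Lie group. Homogeneity under the $L^A_e$-action then gives smoothness everywhere.

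For the second stage, let $\mathcal G\rightrightarrows G$ be the source-simply-connected integration of $A$, and let $H$ be the simply connected Lie group integrating $\hh$. Lie's second theorem integrates $s,t:A\to\hh$ and $u:\hh\to A$ to Lie groupoid morphisms $\mathcal G\to H$, $\mathcal G\to H$, and $H\to\mathcal G$, respectively. Since $s$ is a fibrewise surjective morphism, its integration is a submersion, so $\mathcal G\times_H\mathcal G$ is a Lie groupoid over $G\times G$ integrating $A\times_\hh A$. I expect it to be source-simply connected, because its source fibres are fibred products of simply connected source fibres over simply connected $H$-orbits; if this fails, I would pass to its source-simply-connected cover. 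Then $m$ integrates to a morphism $\mathcal G\times_H\mathcal G\to\mathcal G$ covering $m_G$. The groupoid axioms (associativity, unitality, and inverses obtained by integrating the inversion of $A$) hold because they hold infinitesimally and, by uniqueness in Lie II, morphisms out of source-simply-connected groupoids are determined by their derivatives. This gives the required double Lie group integrating $A\rightrightarrows\hh$.
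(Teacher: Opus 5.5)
Your second stage follows the paper's outline: integrate $s$ and $t$ to morphisms into $H$, form $\mathcal G\times_H\mathcal G$, check that it is source simply connected, and integrate $m$. The first stage, however, has a genuine gap: you never actually establish that $A$ is integrable. The transport of monodromy groups along $L^A_e$-orbits and the ``uniform estimate along a transversal'' are left as an acknowledged obstacle. The input you propose for discreteness at $e$ also fails. Integrability of the strict Lie 2-algebra $\gg_e(A)\rightrightarrows\hh$ gives no control over $\mathcal N_e$, since the isotropy Lie algebras of \emph{every} Lie algebroid integrate. The monodromy comes from spheres in the leaf together with the curvature of a splitting of the anchor along it, and the isotropy 2-algebra does not see that data.

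The missing idea is a dimension count that makes this machinery unnecessary. Since the unit is anchored, $\rho_e$ kills $u(\hh)$, so $\rho_e(A_e)=\rho_e(\kk)=\ll$ (Lemma~\ref{lemma:uAnchord}). On the other hand, Eq.~\eqref{Eq:RtoR} gives $\rho_g(\ker(s_g))=d_eR_g(\ll)\leq\rho_g(A_g)$ for every $g\in G$. If $A$ has constant rank, that rank is $\dim\rho_e(A_e)=\dim\ll$, hence $\rho_g(A_g)=d_eR_g(\ll)$ for all $g$, i.e.\ $\F^A=\F^\kk$. So the leaves are not merely spaces on which $L^A_e$ acts freely: they \emph{are} the cosets $L^A_e\cdot g$, each diffeomorphic to the connected Lie group $L^A_e$, whence $\pi_2(L^A_g)=0$. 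By your own description of the monodromy as integrals over spheres in the leaves, every $\mathcal N_g$ vanishes, and the Crainic--Fernandes criterion applies trivially. No comparison with $\mathcal N_e$ and no Lie 2-algebra input is needed.

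In the second stage, drop the fallback of passing to a source-simply-connected cover of $\mathcal G\times_H\mathcal G$. The integrated multiplication would then be defined on the cover and need not descend to $\mathcal G\times_H\mathcal G$ itself. What you need is that $s_H$ restricted to source fibres is a locally trivial fibration; in the paper it is a principal $C$-bundle over $H$. Then $\pi_1(H)=\pi_2(H)=0$, and the homotopy long exact sequences of this bundle and of its pullback along $t_H$ show that the source fibres of $\mathcal G\times_H\mathcal G$ are simply connected.
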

\begin{proof}
    Since $\rho_e(A_e)=\rho_e(\kk)=\ll$ and $d_eR_g(\ll)=\rho_g(\ker(s_g))\leq\rho_g(A_g)$ for all $g\in G$, if $A$ is regular, $\F^A=\rho\big(\Gamma(\ker(s))\big)\leq\XX(G)$ and the leaf of $A$ through $g\in G$, $L_g^A$, coincides with the coset of the subgroup $L_e^A\cdot g\in G/L^A_e$ of Lemma~\ref{lemma:leaf1}. Consequently, for all $g\in G$, $\pi_2(L_g^A)=(0)$ and $A$ is integrable. 
    
    \noindent For the latter part of the statement, 
    let $D\rightrightarrows G$ be the source connected and source simply connected integration of $A$. 
    As a consequence of Proposition~\ref{Prop:ALgbdIsotropies}, all isotropy groups have the same topology as do all orbits; 
    therefore, all source fibres are isomorphic, e.g. to the source fibre over the identity $s_G^{-1}(e)$.
    If $H$ is the connected and simply connected Lie group with Lie algebra $\hh$, 
    then the source and the target maps integrate to submersive Lie groupoid morphisms $s_H$ and $t_H$ from $D\rightrightarrows G$ to $H$.
    One assembles thus the fibred-product groupoid $D\times_HD\rightrightarrows G\times G$ whose Lie algebroid is $A\times_\hh A$, 
    and whose typical source fibre is isomorphic to the fibred-product $s_G^{-1}(e)\times_Hs_G^{-1}(e)$. 
    Let $\kk$ be the core of $A\rightrightarrows\hh$. 
    If $C$ is the connected and simply connected Lie group with Lie algebra $\kk$,
    $s_H:s_G^{-1}(e)\to H$ is a principal $C$-bundle over $H$. 
    Consequently, $s_G^{-1}(e)\times_Hs_G^{-1}(e)=t_H^*(s_G^{-1}(e))$ is also a principal $C$-bundle. 
    The long exact sequence in homotopy of the former bundle implies that $\pi_2(s_G^{-1}(e))=(0)$, while that of the latter implies
    that $D\times_\hh D\rightrightarrows G^2$ is the source connected and source simply connected integration of $A\times_\hh A$. 
    Thus, ultimately, there exists a Lie groupoid morphism $D\times_HD\to D$ integrating the multiplication map $m:A\times_\hh A\to A$.
\end{proof}

\begin{cor}\label{Cor:IntGps}
     If $A\rightrightarrows(0)$ is an LA-group, then $A$ is integrable as a Lie algebroid and its integration is a strict Lie 2-group.
\end{cor}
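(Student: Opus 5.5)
The plan is to specialize Theorem~\ref{Theo:IntGps} to $\hh=(0)$. First I will check its regularity hypothesis. When $\hh=(0)$ the core sequence~\eqref{Eq:CoreSeq} collapses to $A\cong G\times\kk$, with $\kk=A_e$. By~\eqref{Eq:RtoR} the anchor becomes $\rho(g;z)=\rho_e(z)\ast 0_g$, and the term $\alpha_g$ disappears because its domain is $\hh=(0)$. So $\rho_g(A_g)=d_eR_g(\ll)$ for every $g$, which has constant dimension $\dim\ll$. Hence $A\to G$ is regular, and Theorem~\ref{Theo:IntGps} applies. It shows that $A$ is integrable as a Lie algebroid and that the LA-group $A\rightrightarrows(0)$ integrates to a double Lie groupoid.

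Next I will identify the resulting double structure. The source and target maps integrate to morphisms $s_H,t_H:D\to H$, where $H$ is the simply connected group integrating $\hh=(0)$; so $H$ is a point. Thus $D\times_HD=D\times D$, and the integrated multiplication is a Lie groupoid morphism $M:D\times D\to D$ covering $m_G:G\times G\to G$. The unit $u:(0)\to A$, which covers $\bar e:\ast\to G$, integrates to the unit arrow $1_e\in D$. The inverse is obtained the same way from the inversion of $A\rightrightarrows(0)$, or alternatively from the shear map $(d_0,d_1)\mapsto(M(d_0,d_1),d_1)$ once it is shown to be invertible.

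The remaining step is to show that $M$ makes $D$ a Lie group, so that $D\rightrightarrows G$ becomes a groupoid object in Lie groups, i.e.\ a strict Lie 2-group. The plan is to use uniqueness of integrations. Associativity $M\circ(M\times\mathrm{id})=M\circ(\mathrm{id}\times M)$ and the unit laws hold at the algebroid level because $A$ is a group. Both sides of each identity are groupoid morphisms out of $D^3$, or out of $D$, and these sources are source-simply-connected with algebroid $A^3$ (resp.\ $A$). By Lie's second theorem, two morphisms that agree infinitesimally coincide. The same argument shows that $M$ is compatible with the groupoid structure of $D$, since it is a groupoid morphism by construction.

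The main obstacle is the inverse, because inversion of $A$ is not a Lie algebroid morphism into the same structure without twisting by an automorphism. I would handle it through the shear map. Its infinitesimal counterpart $(a_0,a_1)\mapsto(a_0\Join a_1,a_1)$ is an algebroid isomorphism $A\times A\to A\times A$ covering the shear diffeomorphism of $G\times G$. This map integrates to an isomorphism of source-simply-connected groupoids, and its inverse gives group inverses in $D$. With inverses in hand, $D\rightrightarrows G$ is a strict Lie 2-group, as required.
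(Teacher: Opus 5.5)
Your proposal is correct and follows the paper's route. The paper deduces the corollary from Theorem~\ref{Theo:IntGps}: for $\hh=(0)$ one has $A=\ker(s)\cong G\times\kk$ with anchor $\rho_e(z)\ast 0_g$ of constant rank, and a double Lie group whose side $H$ is trivial is a strict Lie 2-group. The group axioms, which the paper leaves implicit, are checked exactly as you do, via uniqueness in Lie's second theorem and the shear map.

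The only slip is your aside that inversion of $A$ is not a Lie algebroid morphism. It is one: the same shear-map argument applied at the algebroid level shows this. So either of your two routes to the inverse works.
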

In fact, in the context of Remark~\ref{Rmk:omega=0}, if $\rho(\Img(\sigma))\equiv0$, the hypothesis of Theorem~\ref{Theo:IntGps} is verified and the following result ensues.
\begin{cor}\label{Cor:Int}
    If the core sequence~\eqref{Eq:CoreSeq} of the LA-group $A\rightrightarrows\hh$ admits a splitting that is a Lie algebroid morphism, then $A$ is integrable and its integration is a double Lie group.
\end{cor}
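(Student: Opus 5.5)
The plan is to reduce Corollary~\ref{Cor:Int} to Theorem~\ref{Theo:IntGps}. It suffices to check that the existence of such a splitting forces $A\to G$ to be regular.

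Let $\sigma:G\times\hh\to A$ be a splitting of~\eqref{Eq:CoreSeq} that is a Lie algebroid morphism. Here $G\times\hh=\bar\ast^!\hh$ is regarded either as the bundle of Lie algebras (as in Remark~\ref{Rmk:omega=0}) or as the pull-back algebroid. The first step is to extract the anchor condition. If $G\times\hh$ carries the zero anchor, then anchoredness of $\sigma$ gives $\rho\circ\sigma\equiv0$. In the notation of Section~\ref{ssec-anchors} this says $\alpha\equiv0$.

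One point needs a little care, namely whether $\sigma$ extends the unit. At $e$ one has $\alpha_e=0$ anyway. Moreover, $\sigma$ and a unit-extending splitting differ by a core term, which only alters $\alpha$ by elements of $\ll$ (cf. Lemma~\ref{lemma:AlphaBarIndep}). So regularity can be read off directly from $\sigma$, and unit extension is not actually needed.

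Next, by the description after Proposition~\ref{Prop:Rho&RUTH}, the image of the anchor at $g$ is $\rho_g(A_g)\cong\rho_e(\kk)+\alpha_g(\hh)=\ll$ under right trivialization. Hence $\rho(A)$ is the right-translate of the fixed subspace $\ll\le\gg$, which has constant rank $\dim\ll$. Therefore $A$ is regular.

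Theorem~\ref{Theo:IntGps} then yields integrability of $A$ as a Lie algebroid. It also integrates $A\rightrightarrows\hh$ as an LA-group, via $D\rightrightarrows G$ together with the integrated structure maps $D\rightrightarrows H$. These form a double Lie group, since the structure maps are groupoid morphisms and the compatibilities hold by uniqueness of integration of morphisms from source simply connected groupoids.

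The main obstacle is the interpretation of ``Lie algebroid morphism'' for $\sigma$. If the domain is instead the pull-back algebroid $TG\times\hh$, whose anchor is surjective, then anchoredness forces $\rho(\sigma_g(y))$ to be a tangent vector tied to a base component, and the argument changes. In that case I would observe that $\sigma$ covers the identity. Anchoredness then means $\rho\circ\sigma=\mathrm{pr}_{TG}$, and this is impossible unless one reads the domain as the bundle of Lie algebras. So I would fix the reading of Remark~\ref{Rmk:omega=0}, namely $\rho|_{\Img(\sigma)}\equiv0$, and make that explicit at the start.
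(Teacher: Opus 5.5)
Your argument is correct, but it follows a different route from the paper's written proof. Your reduction is this: the zero anchor on the bundle of Lie algebras $G\times\hh$ forces $\rho\circ\sigma\equiv0$, so $\alpha\equiv0$ and $\rho_g(A_g)=d_eR_g(\ll)$ has constant rank, and then Theorem~\ref{Theo:IntGps} applies. This is precisely the justification the paper sketches in the sentence preceding the corollary. The proof itself instead integrates the pieces of the core sequence separately. It integrates $\ker(s)\cong G\times\kk$ to $K\rtimes G\rightrightarrows G$ via Corollary~\ref{Cor:IntGps}, and $G\times\hh$ to the bundle of groups $G\times H\rightrightarrows G$. It then integrates the core sequence, which $\sigma$ splits in the category of Lie algebroids.

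Your route is shorter and delegates all of the double structure to Theorem~\ref{Theo:IntGps}. It also shows that the bracket half of the hypothesis ($\omega\equiv0$) is never needed. Only $\rho\circ\sigma\equiv0$ is used, and in fact only $\bar\alpha\equiv0$, which is splitting-independent by Lemma~\ref{lemma:AlphaBarIndep}. This is also why your remark that unit extension is unnecessary is right. The paper's route uses the full morphism property to present $A$ as a semidirect product of Lie algebroids. That gives a more explicit model of the integrating double Lie group, but it leaves the integration of the action and the assembly of the double structure implicit.

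Your closing worry is moot. The pull-back algebroid $TG\times\hh$ has the wrong rank to be the domain of a splitting. So the bundle-of-Lie-algebras reading of Remark~\ref{Rmk:omega=0} is the only sensible one.
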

\begin{proof}
    Corollary~\ref{Cor:IntGps} implies $G\times\kk\rightrightarrows(0)$ integrates to $K\rtimes G\rightrightarrows G$; on the other hand, $G\times\hh$ integrates to the trivial bundle of Lie groups $G\times H\rightrightarrows G$. Integrating the core sequence~\eqref{Eq:CoreSeq} yields the result.
\end{proof}
Note that for any splitting of the core sequence, $A$ is regular if and only if $\ker(\bar{\alpha}_g)=\hh$ for all $g\in G$. 
We conclude this section by wondering if there could be Lie theoretic conditions --- such as $\hh$ being simple, --- 
that would imply the regularity of $A$. Consider Propositions~\ref{Prop:IsotropyLie2Alg} and~\ref{Prop:kerD}, 
where, as a consequence of having crossed modules, both $\partial\mathfrak{k}$ and $\rho_e(\ker(\partial))$ are ideals, 
respectively, in $\hh$ and $\gg$. Note that as a consequence, the subgroup of $L_e^A$ integrating $\rho_e(\ker(\partial))$ 
is normal in $G$ and the foliation tangent to the cosets is \emph{multiplicative} (see~\cite{Jotz1} and \cite{DiracLieGps}). 
If $\gg$ is simple, either $\ker(\partial)\leq\gg_e(A)$ or the algebroid of arrows is transitive (see Subsection~\ref{sss-transitiveCore} below).

\subsection{Vacant groupoids, crossed actions and representations}\label{ssec-TheWorld}
LA-groups whose core is trivial are referred to in the literature as \textbf{vacant}. 
In the absence of a core, 
the surviving structure amounts to a pair of actions that verify axioms making them compatible to assemble into a double Lie group. 
Recall that the quasi-actions of a RUTH induce actions on the cohomology groups of the structural map $\partial$ (see Lemmas~\ref{lemma:Dela0Indep} and~\ref{lemma:Dela1Indep}), and that the quasi-action $\alpha$ of an AUTH has $\ll$-valued curvature. 
If both $\partial$ and $\rho_e$ are identically zero, 
aside from the above referred actions, the core turns into an abelian Lie algebra on which the sides act. 
In this case, the actions including the representations, still integrate and assemble into a double Lie group.
We explain this in detail below.
\subsubsection{The case $\kk=(0)$ --- }\label{sss-vacant}  
In this case, diagram~\eqref{Eq:BttrFly} vanishes identically. 
Famous among vacant LA-groups, one finds cotangent groupoids of Poisson Lie groups (see, e.g. \cite{Drinfeld,LuWein2}). 
As there is no core, picking a splitting of the core sequence~\eqref{Eq:CoreSeq} amounts to fixing an isomorphism $\sigma:G\times\hh\to A$.
The induced RUTH and AUTH reduce respectively to group action $\Delta^0:G\to$GL$(\hh)$ and a single Lie algebra action $\alpha:\hh\to\XX(G)$. 
These two actions are further compatible in the sense that the connection $\nabla^\hh$ of $\hh$ on $TG\times\hh$ in~\eqref{Eq:AlmostFlat}, 
as well as the map $\Xi^\hh$ in~\eqref{Eq:AlmostRepn} respectively yield representations. 
Using $\sigma$, $A$ turns into the action Lie algebroid associated to $\alpha$. 
Such algebroids are known to always be integrable, 
but they integrate to action groupoids only when the action is complete. 
If $H\times G\to G:(h,g)\mapsto h\bullet g$ integrates $\alpha$, 
then $H\ltimes G\rightrightarrows G$ integrates $A$ and 
integrating the target map yields an action $H\times G\to H:(h,g)\mapsto h^g$ such that 
\begin{eqnarray}\label{Eq:compatible}
  (h_0h_1)^g=h_0^{h_1\bullet g}h_1^g & \textnormal{ and } & h\bullet g_0g_1=(h^{g_1}\bullet g_0)(h\bullet g_1)
\end{eqnarray}
for all $h, h_0,h_1\in H$ and $g,g_0,g_1\in G$. 
The reader is invited to check that the Eq.'s~\eqref{Eq:compatible} differentiate respectively to Eq.~\eqref{Eq:AlmostFlat} and~\eqref{Eq:AlmostRepn}. 
In the case of a Poisson Lie group, $\hh=\gg^*$ and the integrating actions are the so-called \emph{dressing actions} \cite{LuWein}. 
Pressumably, as in op. cit., one can build a double groupoid even when the actions are not complete.

\subsubsection{The case $\partial\equiv0$, $\ll=(0)$ --- }\label{sss-trivial} Assuming the LA-group non-vacant and both structural maps null 
has as first consequence that $\mathfrak{k}=\kk=\ker(\partial)$ is an abelian Lie algebra which, in keeping with Corollary~\ref{Cor:abIso&orb}, we call $V$. 
Given a splitting of the core sequence~\eqref{Eq:CoreSeq}, 
the quasi-actions of the induced RUTH turn into the two actions $\Delta^0:G\to$GL$(\hh)$ and $\Delta^1:G\to$GL$(V)$. 
In a similar fashion, the induced AUTH gets defined by the Lie algebra action $\alpha:\hh\to\XX(G)$ and the Lie algebra representation $\ell_e:\hh\to\ggl(V)$. 
Along with these, one has the curvatures $\Omega$ and $\omega$ which verify their corresponding cocycle equations $\delta^\Delta\Omega=0$ and $\d_\nabla\omega=0$. 
This data is further subject to four equations: 
As in the case of vacant LA-groups, the connection $\nabla^\hh$ of $\hh$ on $TG\times\hh$ in~\eqref{Eq:AlmostFlat}, 
as well as the map $\Xi^\hh$ in~\eqref{Eq:AlmostRepn} respectively yield representations; 
also, $\d_\nabla\Omega=\delta^\Delta\omega$ and 
\begin{align}\label{Eq:compatible'}
    \Delta^1_{g_0}\circ\ell_{g_1}^y-\ell_{g_0g_1}^y\circ\Delta^1_{g_0}=(d_{g_0}\Delta^1)_{\alpha(\Delta^0_{g_1}y)_{g_0}}
\end{align}
for all $g_0,g_1\in G$ and $y\in\hh$. 
Reinterpreting the exact sequence~\eqref{Eq:CoreSeq} as a short exact sequence of Lie algebroids, 
where $\hh\ltimes G$ is the action Lie algebroid associated to $\alpha$ and $G\times V$ is abelian, 
$A$ is determined by the cohomology class $[\omega]\in H^2_\nabla(\hh\ltimes G,G\times V)$. 
Assuming that the action $\alpha$ is complete, one integrates $\hh\ltimes G$ to $H\ltimes G\rightrightarrows G$, with $H$ connected and simply connected. 
Now, $H\times G$ is Hausdorff, and, since the source fibres are all isomorphic to the Lie group $H$, they have vanishing second cohomology groups; 
hence, one can invoke \cite{VanEstC}[Theorem 5] and conclude that $A$ is integrable as well. 
More precisely, $A$ is integrated by the Lie groupoid extension determined by the unique $[\theta]\in H^2_{\textnormal{Gpd}}(H\ltimes G,G\times V)$ 
such that its image under the van Est map $\Phi[\theta]$ coincides with $[\omega]$. 
In building the semi-direct product, the representation $\nabla$ of the AUTH integrates to a representation $\lambda:(H\ltimes G)\times V\to V$. 
Integrating the target map yields again an action $H\times G\to H$ that is compatible with the action integrating $\alpha$ in the sense of~\eqref{Eq:compatible}. 
These structures are intertwined by the formulas $\lambda(h,g_0g_1)=\lambda(h^{g_1},g_0)$ and $\Delta^1_{g_0}\circ\lambda(h,g_1)=\lambda(h,g_0g_1)\circ\Delta^1_{h^{g_1}\bullet g_0}$, 
which differentiate respectively to Eq.'s~\eqref{Eq:ellsuite} and~\eqref{Eq:compatible'}. 
Furthermore, fixed $(g_0,g_1)\in G^2$, there is a $V$-valued 1-form $\Omega^R_{(g_0,g_1)}\in\Omega^1(H,V)$ 
given by right-translating $\Omega_{(g_0,g_1)}\in\hh^*\otimes V$. 
As $H$ is simply connected, one may identify it with the space of paths starting at the identity (see, e.g.~\cite{CF2}), 
and define $\Theta:H\times G^2\to V$ by
$$\Theta(h;g_0,g_1):=\int_h\Omega_{(g_0,g_1)}^R,\textnormal{ for }h:[0,1]\longrightarrow H.$$
The defining property of $\Theta$ is that $\ddt{\tau}\Theta(\exp_H(\tau y);g_0,g_1)=\Omega_{(g_0,g_1)}(y)$. 
One easily computes that it follows from $\delta^\Delta\Omega=0$ that $\Theta$ is a 2-cocycle of $H\rtimes G\rightrightarrows H$ with values on $H\times V$, 
defining thus an extension whose space of arrows is that of the integration of $A$. 
Ultimately, these structures assemble into a double Lie groupoid as a consequence of $\delta^\lambda\Theta=\delta^{\Delta^1}\theta$, 
which obviously differentiate to $\d_\nabla\Omega=\delta^\Delta\omega$. 
Note that in this class of examples, without assuming the completeness of the action $\alpha$, 
the integrating groupoid of $\hh\ltimes G$ must be Hausdorff and must have source fibres with trivial second cohomology 
for this reasoning to work.

\subsection{Atiyah algebroids and bundles of Lie 2-algebras}\label{ssec-final}
In this section we study the two extreme cases when the structural map $\rho_e:\kk\to\gg$ 
is either surjective, so that $\ll=\gg$, or 
vanishes identically, so that $\mathfrak{k}=\kk$ and $\ll=(0)$. 
In the former case, the Lie algebroid is regular; 
hence, as established in Theorem~\ref{Theo:IntGps}, it is integrable even as an LA-group. 
What is new, however, is that due to its being transitive, 
its integrating Lie groupoid can be explicitly given as the gauge groupoid of a principal bundle. 
In the latter case, $\hh$ acts on $G$ irrespective of the splitting and 
the Lie algebroid can thus be integrated in two steps. 

\subsubsection{The case $\ll=\gg$ --- }\label{sss-transitiveCore}
In this case, the Lie algebroid $A\to G$ is transitive and since $\pi_2(G)=(0)$, 
$A$ is the Atiyah algebroid of a principal bundle $q:P\to G$. 
Let $\gg_G(A)$ be the isotropy bundle and consider the map of exact sequences of vector bundles  
\begin{eqnarray*}
    \xymatrix{
        (0) \ar[r] & \gg_G(A) \ar[r]\ar[d] & A \ar[r]^\rho \ar[d]_s & TG \ar[r]\ar[d]^{ds} & (0) \\
        (0) \ar[r] & G\times\hh \ar[r] & G\times\hh \ar[r] & (0) \ar[r] & (0) 
    }
\end{eqnarray*}
From the associated long exact sequence, 
one sees that the unit extending splitting of the core sequence~\eqref{Eq:CoreSeq} can be chosen to be $\gg_G(A)$-valued. 
In so, the associated LA-matched pair has $\alpha\equiv0$ and $\mathfrak{k}$-valued curvatures $\Omega$ and $\omega$. 
In this case, $\omega_g\in\wedge^2\hh^*\otimes\mathfrak{k}$ is indeed a 2-cocycle for each $g\in G$ (cf. Remark~\ref{Rmk:NotCocycles}), 
and, because all isotropy Lie algebras must be isomorphic, $\gg_g(A)\cong\gg_e(A)$ for each $g\in G$. 
As a consequence, there exists $\beta\in C(G,\hh^*\otimes\mathfrak{k})$ 
such that $\omega_g=\d_{\ell_e}\beta_g+\frac{1}{2}[\beta_g,\beta_g]$ and 
$$[\beta_g(y),z]=\ell_g^yz-\ell_e^yz$$
for all $y\in\hh$ and $z\in\mathfrak{k}$.
Since $\gg_e(A)\rightrightarrows\hh$ is a strict Lie 2-algebra by Proposition~\ref{Prop:IsotropyLie2Alg}, 
it integrates to a strict Lie 2-group $\G_e(D_G)\rightrightarrows H$. 
Now $P$ is a principal $\G_e(D_G)$-bundle, 
and the groupoid $P\times_{\G_e(D_G)}P\rightrightarrows G$ integrates $A$.

In the particular case when the structural map $\partial$ is surjective as well, 
the integrating double Lie group appears determined by its core diagram as exposed in \cite{Jotz2}.

\subsubsection{The case $\mathfrak{k}=\kk$ --- }\label{sss-isotropicCore}
In this case, the core Lie algebra is a subalgebra of the isotropy. 
Because of Remark~\ref{Rmk:AUTH}, the vanishing of $\rho_e$ implies that for any splitting, 
the induced quasi-action $\alpha:\hh\to\XX(G)$ is a Lie algebra action. 
Assuming once more that this action is complete, the Lie algebrod $\hh\ltimes G$ 
integrates to $H\ltimes G\rightrightarrows G$. 
The subalgebroid $\ker(s)$ is now a bundle of Lie algebras whose fibre is the Lie algebra $\kk$; 
consequently, it integrates to the bundle of groups $G\times C\rightrightarrows G$. 
The integration of the Lie algebroid $A$ depends now on \emph{integrating the AUTH} $(\alpha,\nabla,\omega)$. 
More precisely, one should build a quasi-action of $H\ltimes G\rightrightarrows G$ on $G\times C\to G$ that integrates $\nabla$, 
and whose failure to yield an action is controlled by a cochain $C^2(H\ltimes G,C)$ that differentiates to $\omega$. 
This in general can be challenging; however, if the action $\alpha$ is zero, the algebroid is but a bundle of Lie algebras, 
and being regular, it is integrable by Theorem~\ref{Theo:IntGps}. 
Although, in general, the Lie algebra structure will vary from fibre to fibre, 
we think of each fibre as a deformation of the isotropy Lie 2-algebra of Proposition~\ref{Prop:IsotropyLie2Alg}; 
indeed, the integrating double Lie group will be a bundle of Lie groups each of whose fibres is a deformation of the 
strict Lie 2-group integrating the isotropy Lie algebra over the identity.


\bibliographystyle{plain}
\bibliography{template}

\end{document}